\Crefname{appsec}{appendix}{appendices}
\numberwithin{equation}{section}
\newtheorem{theorem}{Theorem}[section]
\newtheorem{lemma}{Lemma}[section]
\newtheorem{corollary}{Corollary}[section]
\newtheorem{remark}{Remark}[section]
\newtheorem{proposition}{Proposition}[section]
\title{\Large Dynamics of periodic fractional discrete NLS in the continuum limit\vspace{-1ex}}
\author{\large Brian Choi \vspace{-2ex}\thanks{University of Tennessee at Chattanooga, \texttt{ choigh@bu.edu}\\
\textbf{Keywords.} NLS, Fractional calculus, Continuum limit, Modulational instability, Dispersive estimate\\
\textbf{MSC 2020} 35Q55, 81T27, 65M06, 34A34, 37K60}}
\date{\normalsize October, 2025\vspace{-1ex}}
\begin{document}
\maketitle\vspace{-5ex}
\maketitle
\begin{abstract}
The fractional discrete nonlinear Schr\"odinger equation (fDNLS) is studied on a periodic lattice from the analytic and dynamic perspective by varying the mesh size $h>0$ and the nonlocal L\'evy index $\alpha \in (0,2]$. We show that the discrete system converges to the fractional NLS as $h \rightarrow 0$ below the energy space by directly estimating the difference between the discrete and continuum solutions in $L^2(\mathbb{T})$ using the discrete periodic Strichartz estimates. The sharp convergence rate via the finite difference method (FDM) is shown to be $O(h^{\frac{\alpha}{2+\alpha}})$ in the energy space. To further illustrate the convergent behavior of fDNLS, we survey various dynamical behaviors of the continuous wave (CW) solutions in the context of modulational instability, emphasizing the interplay between linear dispersion (or lattice diffraction), characterized by the nonlocal lattice coupling, and nonlinearity. In particular, the transition as $h \rightarrow 0$ from the \textit{linear} dependence of maximum gain $\Omega_m$ on the amplitude $A$ of CW solutions to the \textit{quadratic} dependence is shown analytically and numerically.    
\end{abstract}
\smallskip



\section{Introduction.}

In this paper, the fractional discrete nonlinear Schr\"odinger equation (fDNLS)
\begin{equation}\label{fdnls}
\begin{split}
i \dot{u}_h &= (-\Delta_h)^{\frac{\alpha}{2}} u_h + \mu |u_h|^2 u_h,\ (x,t) \in \mathbb{T}_h \times \mathbb{R},\\
u_h(x,0) &= u_{h,0}(x)
\end{split}
\end{equation}
on a periodic lattice is studied featuring the continuum limit at low regularity and the modulational instability of CW solutions governed by nonlocal long-range interactions described by the L\'evy index $\alpha \in (1,2)$. The formal continuum limit of \eqref{fdnls} as $h \rightarrow 0$ yields the fractional nonlinear Schr\"odinger equation (fNLS) \eqref{fnls} where the model is defocusing/focusing for $\mu = \pm 1$, respectively. It is immediately observed that \eqref{fdnls} is a FDM model of \eqref{fnls} where the time variable is not discretized. For notation, see \Cref{notation}.

For $\alpha = 2$, \eqref{fnls} recovers the well-studied NLS whose well-posedness theory with the periodic boundary condition goes back to \cite{Bourgain1993}. The method used in this reference based on the Bourgain space motivated the rigorous study of nonlocal \eqref{fnls} by \cite{Yonggeun,3be5559080074497bb318d205d3439db} where the local well-posedness in $H^s(\mathbb{T})$ for $s \geq s_{fNLS} := \frac{2-\alpha}{4}$ was shown. In the non-compact Euclidean space, the well-posedness theory using the Strichartz estimates was shown in \cite{1534-0392_2015_6_2265,dinh:hal-01426761}. Motivated from nonlinear optics, the mixed-fractional variant of fNLS on $\mathbb{R}^2$ was studied in \cite{choi2022well} where the coupling strengths of the nonlocal interaction were assumed to be non-homogeneous in the two transverse directions with respect to the propagation axis.

The NLS is a ubiquitous model in nonlinear wave phenomena that arises as the homogenized equation in various physical applications including the pulse propagation of intense laser beam in nonlinear media and the Bose-Einstein condensates, or the collective behavior of bosons in an ultra-cold temperature, via the Gross-Pitaevskii hierarchy. A recent generalization of NLS, the fractional NLS, introduces nonlocality as a parameter that measures strong correlations between distant lattice sites. One of the motivations for studying fNLS comes from fractional quantum mechanics \cite{laskin2000fractional} where the Feynman path integral formalism based on Brownian-like paths was extended to the $\alpha$-stable L\'evy-like paths.
Meanwhile, our interest extends to the relationship between fNLS and its discrete analog. The long-range variant of DNLS is relevant not only in numerical analysis but also in physical phenomena that are inherently discrete. In a fixed anharmonic lattice, soliton dynamics with the coupling strengths decaying algebraically, as opposed to the nearest-neighbor interaction, was studied in \cite{mingaleev1998solitons,PhysRevE.55.6141}. When $\alpha = 2$, DNLS, among many others, describes pulse propagation in discrete waveguide arrays. DNLS is a well-established model where various results, both theoretical and numerical, can be found in a monograph \cite{kevrekidis2009discrete}. Another important feature of discreteness is the Peierls-Nabarro barrier, studied in \cite{KivCam} applied to DNLS, where the lattice structure yields an effective energy barrier that eventually pins the transport of a pulse. For an extension of this work to a nonlocal setting, see \cite{choi2023localization}.

One of the first rigorous (weak) convergence results of fDNLS to fNLS on $h\mathbb{Z}$ as $h \rightarrow 0$ under a general interaction kernel was shown in \cite{kirpatrick}, to be strengthened to strong convergence \cite{hong2019strong} in $L^2(\mathbb{R})$ under certain hypotheses when $\alpha \in (0,2) \setminus \{1\}$. The strong convergence in $L^2(\mathbb{R}^2)$ was shown in \cite{choi2023continuum} for energy-subcritical data corresponding to $\alpha \in (1,2)$. Our current work contrasts with those of Ignat and Zuazua \cite{ignat2005two,ignat2009numerical,ignat2012convergence}, which are based on preconditioning the numerical scheme, via the Fourier filtering or the two-grid algorithm, that avoids the effect of weak dispersion whose weaker dispersive decay properties were studied in \cite{stefanov2005asymptotic}; however we derive uniform blow-up of discrete dispersive estimates in \Cref{blow-up} by modifying the method in \cite{ignat2009numerical}. Instead our approach does not modify the finite-difference scheme, and therefore, the weak dispersive effects rising from the degenerate phase of the discrete Laplacian need to be addressed. Note that this degeneracy is a purely discrete phenomenon, which leads to a derivative loss in the Strichartz estimates (see \Cref{strichartz}).

A further motivation to introduce nonlocal operators stems from the convergent mean-field behavior of lattice dynamics under long-range interactions to a homogenized nonlocal partial differential equation on a smooth domain. While the analysis on the continuum limit for fDNLS in $\mathbb{R}$ has been studied in \cite{hong2019strong}, an analogous study in $\mathbb{T}$ is absent in the literature, and it is our intention to fill this gap. Naturally, our method builds upon that developed in \cite{hong2021finite} that studied the $h \rightarrow 0$ problem of local DNLS in $\mathbb{T}^2$, where the role of nonlocality is emphasized in our article. One of the main questions that we raise is, is it possible to establish the rigorous convergence as $h \rightarrow 0$ in the mathematical framework of \cite{hong2021finite,hong2019strong} whenever the formal limit, the periodic fNLS, is locally well-posed, i.e., for $s \geq s_{fNLS}$? Our main result provides a partial progress that proves the convergence of FDM for initial data of low regularity.
\begin{theorem}\label{result}
    Let $\alpha \in (1,2]$. For any $s > s_0 := \max(\frac{3-\alpha}{4},\frac{1}{3})$ and $u_0 \in H^s(\mathbb{T})$, let $S(t)u_0$ and $S_h(t) d_h u_0$ denote the well-posed solutions constructed in \Cref{lwp_fnls} and \Cref{lwp_fdnls}, respectively. Then there exists $C(\| u_0 \|_{H^s},\alpha) > 0$ such that the error estimate
    \begin{equation}\label{continuum_local}
        \| p_h S_h(t) d_h u_0 - S(t) u_0\|_{L^2(\mathbb{T})} \leq C h^{\frac{2s}{2+\alpha}},
    \end{equation}
    holds for all $t \in [0,T]$ where $T = T(\| u_0 \|_{H^s},\alpha)>0$. If $s = \frac{\alpha}{2}$ (energy space), then $T>0$ can be arbitrarily large and the order of convergence $\frac{\alpha}{2+\alpha}$ is sharp.
\end{theorem}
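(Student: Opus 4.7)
The plan is to set $u(t):=S(t)u_0$ and $v(t):=p_hS_h(t)d_hu_0$, write both solutions in Duhamel form, and bound their difference directly in $L^2(\mathbb{T})$. Writing
\[
v(t)-u(t)=\mathcal L(t)+\mathcal N(t),
\]
with $\mathcal L(t):=(p_h e^{-it(-\Delta_h)^{\alpha/2}}d_h-e^{-it(-\Delta)^{\alpha/2}})u_0$ the linear semigroup mismatch and $\mathcal N(t)$ the difference of the cubic Duhamel integrals (together with the interpolation-commutator error $[p_h d_h-\mathrm{id}]$ that arises when lifting the nonlinearity between $\mathbb{T}_h$ and $\mathbb{T}$), I would close the argument via Gr\"onwall on a local time interval of length $T=T(\|u_0\|_{H^s},\alpha)$.

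For the linear part, I would expand the Fourier symbol $\sigma_h(\xi)$ of $(-\Delta_h)^{\alpha/2}$ around $h\xi=0$ to obtain the pointwise bound $|\sigma_h(\xi)-|\xi|^\alpha|\le C\min(h^2|\xi|^{2+\alpha},|\xi|^\alpha)$, hence $|e^{it\sigma_h(\xi)}-e^{it|\xi|^\alpha}|\le C\min(th^2|\xi|^{2+\alpha},2)$. By Plancherel, splitting at $|\xi|=N$, the low-frequency block contributes at most $\lesssim th^2N^{2+\alpha-s}\|u_0\|_{H^s}$ and the high-frequency tail costs $\lesssim N^{-s}\|u_0\|_{H^s}$; the balance $N=(th^2)^{-1/(2+\alpha)}$ yields $\|\mathcal L(t)\|_{L^2(\mathbb{T})}\lesssim \langle t\rangle^{s/(2+\alpha)}h^{2s/(2+\alpha)}\|u_0\|_{H^s}$, already at the claimed rate. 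The sampling error $p_hd_h-\mathrm{id}$ is absorbed at the same order by the same spectral cutoff.

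For the nonlinear part, I would expand $|v|^2 v-|u|^2 u$ as a trilinear form carrying one factor of $(v-u)$ and two factors from $\{u,v\}$, and control each summand using the continuum periodic fNLS Strichartz/$X^{s,b}$ framework of \cite{Bourgain1993,Yonggeun,3be5559080074497bb318d205d3439db} on one side and the discrete periodic Strichartz estimate (\Cref{strichartz}) on the other. Because the discrete phase $\sigma_h$ is degenerate, the lattice estimate loses $(3-\alpha)/4$ derivatives (cf.\ \Cref{blow-up}), and an $L^4_{t,x}$-type periodic trilinear argument additionally requires regularity above $1/3$; together these force $s>s_0=\max(\tfrac{3-\alpha}{4},\tfrac{1}{3})$. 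The resulting bound has the form
\[
\|\mathcal N(t)\|_{L^2(\mathbb{T})}\lesssim\int_0^t\bigl(1+\|u(s)\|_{H^s}+\|v(s)\|_{H^s}\bigr)^2\|v-u\|_{L^2}\,ds+Ch^{2s/(2+\alpha)},
\]
so that Gr\"onwall on $[0,T]$ completes \eqref{continuum_local}.

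For the energy space $s=\alpha/2$—which lies strictly above $s_0$ precisely because $\alpha>1$—global-in-time control follows from mass and Hamiltonian conservation for both \eqref{fdnls} and \eqref{fnls}, combined with the periodic fractional Gagliardo--Nirenberg inequality (energy-subcritical for $\alpha>1$), which supplies the uniform bound $\sup_t(\|u\|_{H^{\alpha/2}}+\|v\|_{H^{\alpha/2}})\le C(\|u_0\|_{H^{\alpha/2}})$ independently of $h$; iterating the local estimate on unit-length windows extends \eqref{continuum_local} to arbitrarily large $T$. Sharpness of the exponent $\alpha/(2+\alpha)$ I would establish by testing against a frequency-concentrated datum at $|\xi|\sim h^{-2/(2+\alpha)}$: on that scale $\sigma_h(\xi)-|\xi|^\alpha=O(1)$, so $\mathcal L$ alone saturates the upper bound from below. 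The principal difficulty is threading the derivative loss in the discrete Strichartz through the trilinear estimate without degrading the $h^{2s/(2+\alpha)}$-rate dictated by $\mathcal L$; this is exactly what pins the admissible regularity at $s_0$ rather than at the sharper continuum threshold $s_{fNLS}=(2-\alpha)/4$.
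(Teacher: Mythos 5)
Your proposal follows essentially the same route as the paper: the same decomposition into a linear semigroup mismatch, the Duhamel/commutator terms, and a trilinear difference closed by Gr\"onwall, with the symbol expansion optimized at frequency $N\sim (th^2)^{-1/(2+\alpha)}$ for the rate $h^{2s/(2+\alpha)}$, uniform $L^2_tL^\infty_x$ bounds supplied by the discrete Strichartz estimate on one side and the continuum $X^{s,b}$/bilinear machinery on the other, conservation laws for the global energy-space statement, and a datum concentrated at $|k|\sim h^{-2/(2+\alpha)}$ for sharpness. One small correction of attribution: in the paper the threshold $\tfrac13$ comes from the \emph{lattice} Strichartz loss $\tfrac{2}{q}+\epsilon$ at $q=6$ in \Cref{lwp_fdnls} (the periodic lattice loss is $\alpha$-independent, unlike the $|\nabla_h|^{(3-\alpha)/q}$ loss on $h\mathbb{Z}$), while $\tfrac{3-\alpha}{4}$ comes from the \emph{continuum} bound on $\|u\|_{L^4_TW^{1/4+\epsilon',4}}$ via \Cref{bilinear} --- the reverse of what you wrote.
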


The class of solutions considered above strictly contains Sobolev functions of continuity and finite-energy functions since $s_0(\alpha) < \frac{1}{2} < \frac{\alpha}{2}$. However \Cref{result} is not sharp as $s_{fNLS} < s_0(\alpha)$ and thus more investigation is needed here. This non-sharpness rises in the approximation of an oscillatory sum via integral (see \Cref{lemma:zygmund}), which is used to obtain our main tool, the uniform short-time dispersive estimates \eqref{nonsharp_strichartz}. Furthermore the sharpness of the order of convergence, $O(h^{\frac{\alpha}{2+\alpha}})$ in the energy space, is a new result, and therefore, to minimize the error in discretization (the LHS of \eqref{continuum_local}), it is not only sufficient but also \textit{necessary} that $h$ be small. See also a related low-regularity analysis with damping and forcing \cite{zhang2021long}.

On the other hand, our interest extends to the dynamical behavior of special solutions, where the family of continuous wave (CW) solutions is considered for concreteness, in the continuum limit. To this end, fractional modulational instability (MI) is treated analytically and numerically with nonlocality and discreteness as parameters. Localization of nonlinear waves where a breather-like excitation rises due to a small perturbation in its spectrum has been an active area of research, including the Stokes wave \cite{zakharov2006freak}, MI in soliton dynamics \cite{gelash2019bound}, and mixed-fractional NLS and fNLS \cite{zhang2017modulational,Copeland:20,duo2021dynamics} just to name a few. The MI gain spectrum, maximum gain, and the corresponding fastest-growth frequencies are explicitly computed. 

In \Cref{notation}, mathematical background and notation are introduced. In \Cref{Strichartz_section}, the dispersive estimates for \eqref{fdnls} are developed, followed by the proof of \Cref{result} in \Cref{proof_thm1}. In \Cref{MI_section}, theoretical and numerical studies on fractional MI are presented that emphasize the role of discreteness and the L\'evy index. Some technical results are given in \Cref{Uniform}.

\section{Mathematical background.}\label{notation}

Let $h = \frac{\pi}{M}$ where $M \geq 1$ is an integer. The periodic lattice of uniform mesh is defined as 
\begin{equation*}
\mathbb{T}_h = \{x = hj: j = -M, \dots , M-1\} \cong \mathbb{Z} / (2M \mathbb{Z}).     
\end{equation*}
The discrete Lebesgue space is defined with the normalized counting measure $d\mu_h$ given by
\begin{equation*}
    L^1_h := L^1((\mathbb{T}_h, d\mu_h);\mathbb{C})\ni f \mapsto \int_{\mathbb{T}_h} f d\mu_h := h \sum_{x \in \mathbb{T}_h} f(x),
\end{equation*}
where the family of discrete Lebesgue spaces $L^p_h$ is defined similarly for $p \in [1,\infty]$. The dual space $\mathbb{T}_h^*$ is defined as $Hom(\mathbb{T}_h,S^1) \cong \{-M,\dots,M-1\} \cong \mathbb{Z} / (2M \mathbb{Z})$ where each $k \in \mathbb{T}_h^{*}$ acts on $x \in \mathbb{T}_h$ by $(k,x)\mapsto e^{i k x}$. The Plancherel's Thoerem gives that the spaces of $L^2$ functions on the lattice and its dual are isomorphic under the discrete (inverse) Fourier transform defined by
\begin{equation*}
\mathcal{F}_h [f] (k) = h\sum_{x \in \mathbb{T}_h} f(x) e^{-ikx},\ \mathcal{F}_h^{-1} [g] (x) = (2\pi)^{-1} \sum_{k \in \mathbb{T}_h^{*}}g(k)e^{ikx}.
\end{equation*}
Note the formal convergence as $h \rightarrow 0$ where $\mathbb{T}_h$ tends to $\mathbb{T} = [-\pi,\pi)$ and $\mathcal{F}_h$ tends to $\mathcal{F}$, the Fourier transform on $\mathbb{T}$.

The linear time evolution is governed by the integro-differential operator $(-\Delta_h)^{\frac{\alpha}{2}} := \mathcal{F}_h^{-1} \sigma_h (k) \mathcal{F}_h$ where $\sigma_h(\xi) = \left|\frac{2}{h}\sin\left(\frac{h \xi}{2}\right)\right|^{\alpha}$ for $\xi \in [-\frac{\pi}{h},\frac{\pi}{h})$. When $\alpha$ is an even integer, note that $(-\Delta_h)^{\frac{\alpha}{2}}$ is a local operator, exemplified in the simplest case of $\alpha = 2$ where $\Delta_h f (x) = \frac{f(x+h) + f(x-h) - 2f(x)}{h^2}$ is the center-difference discrete Laplacian. We do not treat $\alpha=4$ here; for the continuum biharmonic case see \cite{zhang2024blow} for local well-posedness and blow-up criteria proved using $X^{s,b}$ spaces, dyadic decompositions, the $TT^*$ method, and energy estimates, which are tools methodically relevant to this work.

Globally, the linear propagator is given by the unitary operator $U_h(t) := e^{-it(-\Delta_h)^{\frac{\alpha}{2}}}$ defined by the multiplier $e^{-it \sigma_h(k)}$. By convention when $h=0$, denote $U_0(t) = U(t) := e^{-it(-\Delta)^{\frac{\alpha}{2}}}$ where $(-\Delta)^{\frac{\alpha}{2}}$ is defined by the symbol $\sigma_0(k) = |k|^\alpha$ on the Fourier side. Recall that $U(t)$ is unitary on the Sobolev space $H^s(\mathbb{R})$ for any $s \in \mathbb{R}$. On the other hand, the discrete Sobolev space is defined with the norm
\begin{equation*}
    \| f \|_{H^s_h}^2 = \| \langle 
 \nabla_h\rangle^{s} f \|_{L^2(\mathbb{T})}^2:= \frac{1}{2\pi} \sum_{k \in \mathbb{T}_h^*} \langle k \rangle^{2s}|\mathcal{F}_h [f](k)|^2,
\end{equation*}
where $\langle \xi \rangle := (1+|\xi|^2)^{\frac{1}{2}}$, and similarly for $\| f \|_{H^s(\mathbb{T})}$.

To study dispersive smoothing, it is often useful to analyze the linear evolution of dyadic frequency components and sum each contribution, utilizing the orthogonality properties of the Littlewood-Paley operators. Throughout this paper, let $N \in 2^{\mathbb{Z}}$ satisfy $N_* \leq N \leq 1$ where $N_* = 2^{\lceil \log_2 (\frac{h}{\pi})\rceil - 1}$. Define
\begin{equation*}
    P_N =
    \begin{cases}
        \mathcal{F}_h^{-1}\chi_{\{|k| \in (\frac{\pi N}{2h},\frac{\pi N}{h}]\}} \mathcal{F}_h,\ & \text{if\ } 2 N_* \leq N \leq 1,\\
        Id - \sum\limits_{2N_* \leq N \leq 1} P_N, & \text{if\ } N = N_*,
    \end{cases}
\end{equation*}
where $Id$ is the identity operator and $\chi_E$ is the characteristic function on $E \subseteq [-\frac{\pi}{h},\frac{\pi}{h})$. As a shorthand, let $P_{\leq N} := \sum\limits_{M \leq N} P_M$. 

To relate continuum and discrete data, the operators $d_h$ (discretization) and $p_h$ (linear interpolation) are used in our approach where, given $f:\mathbb{T} \rightarrow \mathbb{C}$ and $g:\mathbb{T}_h \rightarrow \mathbb{C}$,
\begin{equation}\label{interpolation}
\begin{split}
d_h f (x) &= \frac{1}{h}\int_{x}^{x+h} f(x^\prime)dx^\prime,\ x \in \mathbb{T}_h,\\
p_h g(x) &= g(x_0) + \frac{g(x_0 + h) - g(x_0)}{h} (x-x_0),\ x_0 \in \mathbb{T}_h,\ x \in [x_0,x_0+h).
\end{split}
\end{equation}
See \cite{hong2019strong,hong2021finite} for more details on the properties of $d_h,p_h$ on Sobolev spaces.

We use the notation $f \lesssim g$ (or similarly $f \gtrsim g$) if $f \leq Cg$ for some universal constant $C>0$ and denote $f \simeq g$ by $f \lesssim g$ and $f \gtrsim g$. For more details on the harmonic analysis and numerical analysis on discrete spaces, see \cite{Younghun,hong2021finite}.

\section{Strichartz estimates.}\label{Strichartz_section}
The non-zero curvature of the dispersion relation yields dispersive smoothing estimates, or the Strichartz estimates, manifested as the boundedness of evolution time-dependent operators in various norms of Lebesgue spaces consisting of space-time functions. In \cite{hong2019strong}, the proof of the nonlocal continuum limit in the energy space $H^{\frac{\alpha}{2}}(\mathbb{R})$ when $\alpha \in (1,2)$ is based on the sharp Strichartz estimate
\begin{equation}\label{sharp_strichartz}
    \| e^{-it(-\Delta_h)^{\frac{\alpha}{2}}}f\|_{L^q_t(\mathbb{R};L^r_h)} \lesssim \| |\nabla_h|^{\frac{3-\alpha}{q}} f\|_{L^2_h},
\end{equation}
for $2 \leq q,r \leq \infty$ satisfying $\frac{3}{q} + \frac{1}{r} = \frac{1}{2}$. Since the approach taken to obtain \eqref{sharp_strichartz} does not translate directly to a compact domain, an alternative approach based on approximating an oscillatory sum with an oscillatory integral (\Cref{lemma:zygmund}) was used in \cite{hong2021finite,vega1992restriction}. Here we adopt their method and show (\Cref{strichartz})
\begin{equation}\label{nonsharp_strichartz}
    \| e^{-it(-\Delta_h)^{\frac{\alpha}{2}}}f \|_{L^q([0,1];L^r_h)} \lesssim_\epsilon \| f \|_{H^{\frac{2}{q}+\epsilon}_h}.
\end{equation} 

For $f \in L^1_h \setminus \{0\}$, $\| U_h(t)f \|_{L^\infty_h}$ cannot decay to zero as $t \rightarrow \infty$ due to the conservation of $L^2_h$ norm; see \Cref{lwp_fdnls}. However the discrete dispersive estimates hold locally in time.

\begin{proposition}\label{prop:main}
    Let $\alpha \in (1,2]$ and $|t| \leq \frac{\pi^{2-\alpha}}{2\alpha} \left(\frac{h}{N}\right)^{\alpha-1}$. Then
    \begin{equation}\label{dispersiveest}
        \| U_h(t) P_{\leq N} f\|_{L^\infty_h} \lesssim |\alpha - 1|^{-\frac{1}{3}} \left(\frac{N}{h}\right)^{1-\frac{\alpha}{3}}|t|^{-\frac{1}{3}} \| f \|_{L^1_h}.
    \end{equation}
\end{proposition}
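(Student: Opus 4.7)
The plan is to bound the pointwise norm of the kernel of $U_h(t)P_{\leq N}$ via a stationary-phase analysis and conclude by Young's convolution inequality. Writing $[U_h(t)P_{\leq N}f](x) = (K_{t,N}*_h f)(x)$ with
\[
K_{t,N}(z)=\frac{1}{2\pi}\sum_{k\in\mathbb{T}_h^{*},\,|k|\leq \pi N/h} e^{i(kz-t\sigma_h(k))},
\]
it suffices to show $\|K_{t,N}\|_{L^\infty_h}\lesssim |\alpha-1|^{-1/3}(N/h)^{1-\alpha/3}|t|^{-1/3}$. First I would apply the Zygmund-type approximation (\Cref{lemma:zygmund}) to replace the Riemann sum by the oscillatory integral $I(t,z):=\int_{|k|\leq\pi N/h}e^{i(kz-t\sigma_h(k))}\,dk$. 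The hypothesis $|t|\leq \frac{\pi^{2-\alpha}}{2\alpha}(h/N)^{\alpha-1}$ is exactly what is needed: using the uniform bound $|\sigma_h'(k)|\leq \alpha\pi^{\alpha-1}(N/h)^{\alpha-1}$ on the relevant band one finds $|t\sigma_h'(k)|\leq \pi/2$, so the total variation hypothesis required by the sum-to-integral lemma is met and the error term is dominated by the claimed bound.

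Next I would estimate $I(t,z)$ by Van der Corput's lemma of order three. Setting $s=hk/2$, a direct computation yields
\[
\partial_k^3\sigma_h(k)=\alpha(2/h)^{\alpha-3}\bigl[(\alpha-1)(\alpha-2)|\sin s|^{\alpha-3}\cos^3 s-(3\alpha-2)|\sin s|^{\alpha-1}\cos s\bigr]\,\mathrm{sgn}(\sin s),
\]
and for $|s|\lesssim N\leq 1$ the first term dominates and is of size $\simeq |\alpha-1|(N/h)^{\alpha-3}$, the factor $|\alpha-2|$ and $\alpha$ being absorbed into universal constants. Applying the third-derivative Van der Corput inequality (which needs no monotonicity hypothesis) then yields $|I(t,z)|\lesssim (|t||\alpha-1|(N/h)^{\alpha-3})^{-1/3}$, which is exactly the target decay. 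Young's inequality applied to $K_{t,N}*_h f$ closes the argument.

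The cleanest implementation is dyadic: prove the estimate for each Littlewood-Paley block $P_M$ with $|k|\simeq M/h$, where the lower bound $|\partial_k^3\sigma_h|\gtrsim |\alpha-1|(M/h)^{\alpha-3}$ holds uniformly on the annulus, and then sum over $N_*\leq M\leq N$. Since $1-\alpha/3>0$ for $\alpha<3$, the geometric series $\sum_{M\leq N}(M/h)^{1-\alpha/3}$ telescopes to $(N/h)^{1-\alpha/3}$ up to a constant, producing the claimed aggregate bound.

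The main obstacle is verifying the lower bound on $|\partial_k^3\sigma_h|$ uniformly across the support of $P_{\leq N}$ while tracking the correct $|\alpha-1|$ dependence. Two terms in $\partial_k^3\sigma_h$ can compete, so one must check they do not cancel on the relevant annulus; this requires a case split based on whether $s$ is close to $0$ or close to $\pi/2$ (the latter relevant only for the top dyadic block when $N\simeq 1$). The endpoint $\alpha=2$ is delicate because the coefficient $(\alpha-1)(\alpha-2)$ vanishes and one must show that the surviving second term still provides the needed lower bound, or alternatively trade a third-derivative estimate for a second-derivative one on the region where $|\sigma_h''|$ is bounded below; the two bounds are then patched together by choosing the weaker one, which is still consistent with the stated estimate. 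A final bookkeeping issue is ensuring that the sum-to-integral error in the Zygmund step is of the same order as $I(t,z)$ itself, so that the hypothesis on $|t|$ is used just once and optimally.
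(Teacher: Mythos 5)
Your overall architecture (kernel bound via sum-to-integral approximation plus Van der Corput, then Young's inequality, with the time restriction used to control $|t\sigma_h'|$ and hence validate \Cref{lemma:zygmund}) matches the paper's. The gap is in the oscillatory-integral step. Your main line rests on the lower bound $|\partial_k^3\sigma_h|\gtrsim|\alpha-1|(M/h)^{\alpha-3}$ on each dyadic annulus, with "the factor $|\alpha-2|$ absorbed into universal constants." For a \emph{lower} bound this absorption is illegitimate: writing $s=hk/2$, one has $\frac{d^3}{ds^3}|\sin s|^{\alpha}=\alpha\,\mathrm{sgn}(\sin s)\cos s\,|\sin s|^{\alpha-3}\left(\alpha^2\cos^2 s-3\alpha+2\right)$, and near $s=0$ the bracket tends to $(\alpha-1)(\alpha-2)$, so the correct coefficient is $(\alpha-1)(2-\alpha)$. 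This degenerates as $\alpha\to 2^-$ and vanishes at $\alpha=2$, where $\sigma_h'''(k)=-2h\sin(hk)\simeq h^2|k|$, which on the annulus $|k|\simeq M/h$ is of size $hM$, far below your claimed $h/M$ when $M<1$. A third-derivative Van der Corput applied with the true lower bound would therefore produce a constant of order $\left((\alpha-1)(2-\alpha)\right)^{-1/3}$ rather than the stated $|\alpha-1|^{-1/3}$, and yields nothing at $\alpha=2$; the dyadic, third-derivative-only argument cannot close.

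The device you mention only as an "endpoint patch" is in fact the entire proof for all $\alpha$ near $2$, and it is what the paper does: split the frequency domain according to the inflection point $\xi_0=\frac{2}{h}\arccos(\alpha^{-1/2})$ of the symbol, i.e., into $S=\{|1-\alpha\cos^2(h\xi/2)|\geq|\alpha-1|/2\}$ and its complement. On $S$ one has $|\phi''|\gtrsim(\alpha-1)|t||\xi|^{\alpha-2}$ and uses the second-derivative Van der Corput bound; on the complement, $\cos^2(h\xi/2)$ is within $O(|\alpha-1|)$ of $1/\alpha$, whence $|\alpha^2\cos^2 s-3\alpha+2|\geq\alpha-1$ and $|\cos s|\geq\frac{1}{2}$, so $|\phi'''|\gtrsim(\alpha-1)|t||\xi|^{\alpha-3}$ \emph{with no $(2-\alpha)$ loss}. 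Combining the resulting $|t|^{-1/2}$ and $|t|^{-1/3}$ bounds with the trivial bound $N/h$ gives \eqref{bottleneck}. You should also record that \Cref{lemma:zygmund} requires $\phi'$ to be monotone, which forces the same split at $\pm\xi_0$ (where $\phi''$ changes sign), and that the phase is $\phi(\xi)=-t\sigma_h(\xi)+\xi x$, so the $+x$ term costs an extra $\pi$ in $\sup|\phi'|$, absorbed by taking $\epsilon=\frac{1}{4}$ in the lemma.
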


Our proof of \Cref{prop:main} is motivated from \cite{hong2021finite,vega1992restriction} where a discrete oscillatory sum (see \eqref{bottleneck}) is approximated by an oscillatory integral, after which the Van der Corput Lemma is applied. 

\begin{lemma}[{\cite[Chapter 5, Lemma 4.4]{zygmund2002trigonometric}}]\label{lemma:zygmund}
Let $a<b$ and $0 < \epsilon < 1$. Assume $\sup\limits_{\xi \in (a,b)}|\phi^\prime(\xi)| \leq 2\pi(1-\epsilon)$ and $\phi^\prime$, monotonic in $(a,b)$. Then there exists $A_\epsilon>0$ independent of $a,b,\phi$ such that
\begin{equation*}
    \left|\int_a^b e^{i \phi(\xi)} d\xi - \sum_{a < k \leq b} e^{i\phi(k)}\right| \leq A_\epsilon.
\end{equation*}
\end{lemma}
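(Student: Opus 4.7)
The plan is to apply Poisson summation to a smoothly truncated copy of $e^{i\phi}$ and then extract summable decay of the Fourier coefficients by integrating by parts twice. One derivative of gain will come from the smooth cutoff, and the other from the hypothesis that $\phi'$ is monotonic and bounded, which makes $\phi''$ a bounded Radon measure of controlled total mass.

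Without loss of generality assume $b - a \geq 2$, and fix $\eta \in C_c^\infty(\mathbb{R})$ with $\eta \equiv 1$ on $[a+1, b-1]$, $\eta \equiv 0$ outside $[a, b]$, and $\|\eta^{(j)}\|_{L^1(\mathbb{R})}$ bounded by a constant depending only on $j$ (via a fixed pair of endpoint profiles translated to $a$ and $b$). Set $F(\xi) := \eta(\xi) e^{i\phi(\xi)}$. Since $|F| \leq 1$ and $\eta$ differs from $\mathbf{1}_{(a,b]}$ only on two unit intervals, $\sum_{k \in \mathbb{Z}} F(k) = \sum_{a < k \leq b} e^{i\phi(k)} + O(1)$ and $\hat{F}(0) = \int_a^b e^{i\phi(\xi)}\, d\xi + O(1)$. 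Poisson summation then reduces the lemma to
\begin{equation*}
\sum_{n \neq 0} |\hat{F}(n)| \leq A_\epsilon, \quad \hat{F}(n) = \int_a^b \eta(\xi)\, e^{i(\phi(\xi) - 2\pi n \xi)}\, d\xi.
\end{equation*}

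Write $\psi_n(\xi) := \phi(\xi) - 2\pi n \xi$. The hypothesis $|\phi'| \leq 2\pi(1-\epsilon)$ forces $|\psi_n'| \geq 2\pi(|n|-1+\epsilon) \geq 2\pi\epsilon$ for $|n| \geq 1$, and monotonicity of $\phi'$ transfers to $\psi_n'$. A Stieltjes integration by parts using $e^{i\psi_n}\, d\xi = (i\psi_n')^{-1} d(e^{i\psi_n})$ --- with boundary terms vanishing because $\eta(a) = \eta(b) = 0$ --- followed by the Stieltjes product rule $d(\eta/\psi_n') = (\eta'/\psi_n')\, d\xi - (\eta/\psi_n'^{2})\, d\phi'$ yields
\begin{equation*}
\hat{F}(n) = \frac{1}{i}\int_a^b e^{i\psi_n(\xi)} \frac{\eta(\xi)}{\psi_n'(\xi)^2}\, d\phi'(\xi) - \frac{1}{i} \int_a^b e^{i\psi_n(\xi)} \frac{\eta'(\xi)}{\psi_n'(\xi)}\, d\xi.
\end{equation*}
The first (Stieltjes) integral is immediately $O(|n|^{-2})$ since $\mathrm{TV}(\phi') \leq 4\pi(1-\epsilon)$ (forced by monotonicity plus the $L^\infty$ bound) and $\inf|\psi_n'|^2 \gtrsim n^2$ for $|n| \geq 2$. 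The second integral is only $O(|n|^{-1})$ at first pass, so I would integrate by parts once more using the same trick, picking up $\eta''\, d\xi$ together with another $d(1/\psi_n'^{2}) = -2(1/\psi_n')^3 d\phi'$; the identical analysis gives $O(|n|^{-2})$ for $|n| \geq 2$ and an $O(\epsilon^{-3})$ bound for $|n| = 1$. Summing $|\hat F(n)|$ over $n \neq 0$ produces $A_\epsilon$.

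The main obstacle, as I see it, is organizing the two integrations by parts under the minimal regularity available: $\phi$ is only assumed $C^1$, so $\phi''$ exists just as the signed Radon measure $d\phi'$, and every intermediate step must be a Stieltjes rather than a Riemann computation. The payoff is that monotonicity of $\phi'$ automatically supplies $\mathrm{TV}(\phi') \leq 4\pi(1-\epsilon)$ at no extra cost, so the two derivatives of smoothness needed for $n^{-2}$ summability are supplied jointly by $\eta$ (smooth) and $\phi'$ (bounded variation), with the $1/(|n|-1+\epsilon)$ factors from $\inf|\psi_n'|$ explaining the $\epsilon$-dependence of $A_\epsilon$.
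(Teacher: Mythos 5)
The paper does not prove this lemma at all --- it is quoted verbatim from Zygmund (Chapter 5, Lemma 4.4) and used as a black box in the proof of \Cref{prop:main} --- so there is no in-paper argument to match against; what you have written is a self-contained substitute for the citation, and it is correct. Your route differs from the classical one in Zygmund/van der Corput: the textbook proof replaces the sharp indicator by the sawtooth function via Euler--Maclaurin, expands the sawtooth in its Fourier series to get an explicit $1/n$ from the coefficients, and then applies the first-derivative test (one integration by parts plus the second mean value theorem for the monotone amplitude $\phi'/(\phi'+2\pi n)$) to get the second factor $1/(|n|-1+\epsilon)$, so that a single integration by parts suffices. You instead smooth the cutoff, invoke Poisson summation for the continuous compactly supported BV function $\eta e^{i\phi}$, and pay for the absence of the sawtooth's $1/n$ by performing a second (Stieltjes) integration by parts; the bookkeeping is heavier but each step is elementary, and you correctly identify that monotonicity plus the $L^\infty$ bound gives $\mathrm{TV}(\phi')\leq 4\pi(1-\epsilon)$ for free, that $\inf|\psi_n'|\geq 2\pi(|n|-1+\epsilon)$ supplies the $\epsilon$-dependence at $n=\pm1$, and that the $b-a<2$ case is trivial. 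The only point worth tightening is the chain-rule step $d\bigl(1/\psi_n'^{\,2}\bigr)=-2\psi_n'^{-3}\,d\phi'$, which fails as an identity at jump discontinuities of the monotone function $\phi'$; but you only ever need an upper bound on the total variation of $1/\psi_n'^{\,2}$ (namely $\sup|(t^{-2})'|\cdot\mathrm{TV}(\phi')$ over $|t|\geq 2\pi(|n|-1+\epsilon)$), which holds regardless, so the estimate survives. Either proof would serve the paper equally well; yours has the mild advantage of making the $A_\epsilon=O(\epsilon^{-3})$ dependence explicit, which is relevant to the remark following \Cref{strichartz} about why $\epsilon=0$ cannot be used there.
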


\begin{proof}[Proof of \Cref{prop:main}]
Consider the identity
\begin{equation*}
\begin{split}
    U_h(t) P_{\leq N} f (x) &= \frac{1}{2\pi} \sum_{|k| \leq \frac{\pi N}{h}} e^{i\left(-t \left| \frac{2}{h}\sin \frac{hk}{2}\right|^{\alpha} + kx\right)} \mathcal{F}_h f(k)\\
    &=h \sum_{x^\prime \in \mathbb{T}_h} f(x^\prime) \sum_{|k| \leq \frac{\pi N}{h}}\frac{1}{2\pi} e^{i\left(-t \left| \frac{2}{h}\sin \frac{hk}{2}\right|^{\alpha} + k(x-x^\prime)\right)} = K_{t} \ast f,
\end{split}
\end{equation*}
where $K_t(x):= \sum\limits_{|k| \leq \frac{\pi N}{h}}\frac{1}{2\pi} e^{i\left(-t \left| \frac{2}{h}\sin \frac{hk}{2}\right|^{\alpha} + kx\right)}$ and $
\ast$ denotes the discrete convolution defined by the measure $d\mu_h$. As a shorthand, let 
\begin{equation}\label{phase}
\phi(\xi) = -t \left| \frac{2}{h}\sin \frac{h\xi}{2}\right|^{\alpha} + \xi x.    
\end{equation}
It suffices to show 
\begin{equation}\label{bottleneck}
\| K_t \|_{L^\infty_h} \lesssim |\alpha - 1|^{-\frac{1}{3}} \left(\frac{N}{h}\right)^{1-\frac{\alpha}{3}}|t|^{-\frac{1}{3}}    
\end{equation}
by the Young's inequality. By the triangle inequality,
\begin{equation}\label{bottleneck2}
    |K_t(x)| \leq \left|K_t(x) - \frac{1}{2\pi}\int_{-\frac{\pi N}{h}}^{\frac{\pi N}{h}} e^{i\phi(\xi)}d\xi\right| + \left| \frac{1}{2\pi}\int_{-\frac{\pi N}{h}}^{\frac{\pi N}{h}} e^{i\phi(\xi)}d\xi \right|=: I + II.
\end{equation}
To show that $I = O(1)$ and $II$ is consistent with \eqref{bottleneck} by \Cref{lemma:zygmund} and the Van der Corput Lemma, respectively, the higher order derivatives of $\phi$ need to estimated. Let $sgn(\xi) = 1$ if $\xi > 0$ and $sgn(\xi) = -1$ if $\xi < 0$. Then,
\begin{equation}
\begin{split}
\phi^\prime(\xi) &= -\alpha t \cdot sgn(\xi)\cos\left(\frac{h\xi}{2}\right) \left| \frac{\sin(\frac{h\xi}{2})}{h/2} \right|^{\alpha-1}  + x,\\
\phi^{\prime\prime}(\xi) &= \alpha t \left(\frac{h}{2}\right)^{2-\alpha} \frac{1-\alpha \cos^2\left(\frac{h\xi}{2}\right)}{\left| \sin \left(\frac{h\xi}{2}\right)\right|^{2-\alpha}},\\
\phi^{\prime\prime\prime}(\xi) &= - \alpha t \left(\frac{h}{2}\right)^{3-\alpha} sgn(\xi) \frac{\left(\alpha^2 \cos^2 \left(\frac{h \xi}{2}\right) - 3\alpha +2\right)\cos\left(\frac{h\xi}{2}\right)}{\left| \sin \left(\frac{h\xi}{2}\right)\right|^{3-\alpha}}.
\end{split}
\end{equation}

By direct computation, $\phi^\prime$ is monotonic on $E_1 := (-\xi_0,\xi_0)$ and $E_2 := [-\frac{\pi}{h},\frac{\pi}{h}]\setminus (-\xi_0,\xi_0)$ separately where $\xi_0 := \frac{2}{h} \arccos (\alpha^{-\frac{1}{2}})$ is the unique positive root of $\phi^{\prime\prime}$ in $(0,\frac{\pi}{h})$. By choosing $\epsilon = \frac{1}{4}$ in \Cref{lemma:zygmund}, it can further be verified that $\sup\limits_{\xi \in [-\frac{\pi N}{h},\frac{\pi N}{h}]}|\phi^\prime(\xi)| \leq \frac{3\pi}{2}$ given the restriction on $|t|$, which shows $I = O(1)$ estimating the difference on $E_1,E_2$ separately.

To estimate the integral in $II$, the lower bounds of $|\phi^{\prime\prime}|,|\phi^{\prime\prime\prime}|$ are estimated. Let
\begin{equation*}
S = \Bigl\{|\xi| \leq \frac{\pi N}{h}: \left|1-\alpha \cos^2 \left(\frac{h\xi}{2}\right)\right| \geq \frac{|\alpha - 1|}{2}\Bigr\}.   
\end{equation*}
On $S$, the bound $\left|\sin \left(\frac{h\xi}{2}\right)\right| \leq \frac{h|\xi|}{2}$ is used to obtain
\begin{equation}\label{vdc1}
|\phi^{\prime\prime}(\xi)| \gtrsim (\alpha - 1)|t| |\xi|^{-(2-\alpha)}.   
\end{equation}
On $[-\frac{\pi N}{h},\frac{\pi N}{h}]\setminus S$, we have
\begin{equation*}
    |\phi^{\prime\prime\prime}(\xi)| \gtrsim (\alpha - 1) |t| |\xi|^{-(3-\alpha)},
\end{equation*}
since
\begin{equation*}
    \left|\alpha^2 \cos^2 \left(\frac{h\xi}{2}\right)-3 \alpha +2\right| \geq 2 |\alpha - 1| - \left|\alpha^2 \cos^2\left(\frac{h\xi}{2}\right)-\alpha\right| \geq 2|\alpha - 1| - \frac{\alpha}{2} |\alpha - 1| \geq \alpha - 1,
\end{equation*}
and
\begin{equation*}
    \left|\alpha \cos^2\left(\frac{h\xi}{2}\right)\right| \geq 1 - \left|\alpha \cos^2\left(\frac{h\xi}{2} \right)- 1\right| \geq 1 - \frac{|\alpha - 1|}{2} \geq \frac{1}{2}.
\end{equation*}
Hence
\begin{equation}\label{vdc}
\begin{split}
\left|\int_{-\frac{\pi N}{h}}^{\frac{\pi N}{h}} e^{i\phi(\xi)}d\xi\right| &\leq \left|\int_S e^{i\phi(\xi)}d\xi\right| + \left|\int_{[-\frac{\pi N}{h},\frac{\pi N}{h}]\setminus S} e^{i\phi(\xi)}d\xi\right|\\
&\lesssim \max\left((\alpha - 1)^{-\frac{1}{2}}\left(\frac{N}{h}\right)^{1-\frac{\alpha}{2}}|t|^{-\frac{1}{2}},(\alpha - 1)^{-\frac{1}{3}}\left(\frac{N}{h}\right)^{1-\frac{\alpha}{3}}|t|^{-\frac{1}{3}}\right)\\
&\lesssim (\alpha - 1)^{-\frac{1}{3}}\left(\frac{N}{h}\right)^{1-\frac{\alpha}{3}}|t|^{-\frac{1}{3}},
\end{split}
\end{equation}
where the last inequality follows from interpolating $\left|\int_{-\frac{\pi N}{h}}^{\frac{\pi N}{h}} e^{i\phi(\xi)}d\xi\right| \lesssim \frac{N}{h}$ and the Van der Corput estimate obtained from \eqref{vdc1}. Substituting $I = O(1)$ and \eqref{vdc} into \eqref{bottleneck2}, the desired estimate \eqref{bottleneck} is shown since for $|t| \lesssim (\frac{h}{N})^{\alpha - 1}$ and $N \geq N_*$,
\begin{equation*}
(\alpha - 1)^{-\frac{1}{3}}\left(\frac{N}{h}\right)^{1-\frac{\alpha}{3}}|t|^{-\frac{1}{3}} \gtrsim (\alpha - 1)^{-\frac{1}{3}}\left(\frac{N}{h}\right)^{\frac{2}{3}} \gtrsim (\alpha - 1)^{-\frac{1}{3}}.     
\end{equation*}   
\end{proof}

The forthcoming $TT^*$ method yielding short-time Strichartz estimates is standard in the dyadic framework; compare the Kerr-law NLS trilinear/$TT^*$ machinery \cite{zhang2021trilinear} and the related low-regularity analyses for generalized Kawahara/Ostrovsky \cite{zhang2018well,zhang2016well}, as well as higher-order negative-index variants \cite{zhang2019low}. Our periodic lattice phase, with weakened curvature and aliasing at certain dyadic scales, necessitates the time localization and oscillatory–sum bounds established above.

\begin{corollary}\label{strichartz}
    Let $(q,r)\in [2,\infty]^2$ be lattice-admissible and $\alpha \in (1,2]$. Then for all $\epsilon>0$,    \begin{align}
    \| U_h(t) f \|_{L^q([0,1];L^r_h)} &\lesssim_\epsilon |\alpha - 1|^{-\frac{1}{6}(1-\frac{2}{r})} \| f \|_{H^{\frac{2}{q}+\epsilon}_h},\label{strichartzest}\\
    \left|\left| \int_0^t U_h(t-t^\prime) F(t^\prime)dt^\prime \right|\right|_{L^q([0,1];L^r_h)} &\lesssim_\epsilon |\alpha - 1|^{-\frac{1}{3}(1-\frac{2}{r})}\| F \|_{L^1([0,1];H^{\frac{2}{q}+\epsilon}_h)}.\label{strichartzest2}    
    \end{align}
\end{corollary}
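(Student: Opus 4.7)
The plan is to combine the pointwise dispersive bound of \Cref{prop:main} with a Littlewood-Paley decomposition and the $TT^*$ argument, treating each dyadic block on a short time interval whose length is dictated by \Cref{prop:main}. First, I would interpolate \eqref{dispersiveest} against the $L^2_h \to L^2_h$ unitarity of $U_h(t)$ to obtain, for $P_N f$ and $|t| \lesssim (h/N)^{\alpha-1}$,
\[
\|U_h(t) P_N f\|_{L^r_h} \lesssim |\alpha-1|^{-\frac{1}{3}(1-\frac{2}{r})} \bigl(\tfrac{N}{h}\bigr)^{(1-\frac{\alpha}{3})(1-\frac{2}{r})} |t|^{-\frac{1}{3}(1-\frac{2}{r})} \|P_N f\|_{L^{r'}_h}.
\]
On each short sub-interval $I$ of length $\simeq (h/N)^{\alpha-1}$, the standard $TT^*$ reduction identifies the $L^q(I;L^r_h)$ bound on $U_h(\cdot) P_N f$ with the $L^{q'}(I;L^{r'}_h) \to L^q(I;L^r_h)$ boundedness of the convolution $TT^*F = \int_I U_h(t-t') P_N^2 F(t')\,dt'$, which follows from the displayed dispersive estimate via the one-dimensional Hardy-Littlewood-Sobolev inequality under the lattice admissibility $\tfrac{2}{q} = \tfrac{1}{3}(1-\tfrac{2}{r})$. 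The square-root passage $\|T\|^2 = \|TT^*\|$ then produces the coefficient $|\alpha-1|^{-\frac{1}{6}(1-\frac{2}{r})}$.

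Second, I would cover $[0,1]$ by $K \simeq (N/h)^{\alpha-1}$ such subintervals, reinitializing via $L^2_h$-conservation at each endpoint, and assemble them in $\ell^q$. The powers of $N/h$ combine through admissibility as $\tfrac{\alpha-1}{q} + \tfrac{3-\alpha}{q} = \tfrac{2}{q}$, yielding the dyadic-level estimate
\[
\|U_h(t) P_N f\|_{L^q([0,1]; L^r_h)} \lesssim |\alpha-1|^{-\frac{1}{6}(1-\frac{2}{r})} \bigl(\tfrac{N}{h}\bigr)^{2/q} \|P_N f\|_{L^2_h},
\]
in which $(N/h)^{2/q}$ is precisely the Sobolev weight of $H^{2/q}_h$ at scale $N$. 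To sum over dyadic $N \in [N_*, 1]$, I would pay an $\epsilon$-loss via Cauchy-Schwarz with the auxiliary weight $(N/h)^{-\epsilon}$, absorbing the logarithmic dyadic count into the $\epsilon$; this yields \eqref{strichartzest}. For the inhomogeneous estimate \eqref{strichartzest2}, I would apply Minkowski's inequality in $t'$ to $\int_0^t U_h(t-t') F(t')\,dt'$ (equivalently, the Christ-Kiselev lemma on the retarded kernel), so that the homogeneous bound applied pointwise in $t'$ integrates to the $L^1_t H^{2/q+\epsilon}_h$ norm on the right; the stated $|\alpha-1|^{-\frac{1}{3}(1-\frac{2}{r})}$ then appears as a conservative over-estimate of the sharper $|\alpha-1|^{-\frac{1}{6}(1-\frac{2}{r})}$ that this route actually produces.

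The main obstacle I anticipate is the precise exponent bookkeeping: the per-block powers of $N/h$ must telescope exactly to $2/q$, which requires the admissibility relation to be used consistently in both the HLS step (inside each subinterval) and in the sub-interval count at the end. A secondary subtlety is the lowest dyadic block $P_{N_*}$, whose Fourier support is of size $O(1)$ and which is defined by complementation; for this block, Bernstein's inequality on the lattice supplies the estimate directly, bypassing the dispersive machinery and thus not disrupting the summation.
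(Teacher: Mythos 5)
Your proposal is correct and follows essentially the same route as the paper: the paper rescales time so that $P_{\leq N} U_h(t(\tfrac{N}{h})^{3-\alpha})P_{\leq N}$ satisfies the Keel--Tao dispersive hypothesis and then invokes that theorem as a black box, which is precisely the interpolated-dispersive-estimate / $TT^*$ / Hardy--Littlewood--Sobolev argument you carry out by hand, followed by the same iteration over $O((\tfrac{N}{h})^{\alpha-1})$ subintervals via unitarity, the same telescoping $\tfrac{\alpha-1}{q}+\tfrac{3-\alpha}{q}=\tfrac{2}{q}$, and the same dyadic summation with an $\epsilon$-loss. The only cosmetic difference is the inhomogeneous estimate, where the paper applies Keel--Tao a second time and records the squared constant $|\alpha-1|^{-\frac{1}{3}(1-\frac{2}{r})}$, whereas your Minkowski/Christ--Kiselev route yields the sharper $|\alpha-1|^{-\frac{1}{6}(1-\frac{2}{r})}$, which of course still implies the stated bound.
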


\begin{proof}
    Observe that $\tilde{U}_h(t) = P_{\leq N} U_h(t(\frac{N}{h})^{3-\alpha})P_{\leq N}$ satisfies the hypothesis of \cite[Theorem 1.2]{keel1998endpoint}, and therefore
\begin{equation}\label{strichartz2}
    \| U_h(t) P_{\leq N} f \|_{L^q([0,\tau];L^r_h)} \lesssim |\alpha - 1|^{-\frac{1}{6}(1-\frac{2}{r})} \left(\frac{N}{h}\right)^{\frac{3-\alpha}{q}} \| f \|_{L^2_h}, 
\end{equation}
where $\tau = \frac{\pi^{2-\alpha}}{2\alpha} \left(\frac{h}{N}\right)^{\alpha-1}$. By iterating \eqref{strichartz2} on $[0,1]$ using the unitarity of $U_h(t)$, \eqref{strichartzest} is shown. Another application of \cite[Theorem 1.2]{keel1998endpoint} yields the inhomogeneous estimate \eqref{strichartzest2}. Note that the implicit constant of \eqref{strichartzest2} is that of \eqref{strichartzest} squared by the TT$^*$ argument based on the complex interpolation of dispersive estimates between $\| U_h(t) P_{\leq N} f \|_{L^2_h} \leq \| f \|_{L^2_h}$ and \eqref{dispersiveest} given by
\begin{equation*}
    \| U_h(t) P_{\leq N} f \|_{L^r_h} \lesssim |\alpha - 1|^{-\frac{1}{3}(1-\frac{2}{r})} \left(\frac{N}{h}\right)^{(1-\frac{\alpha}{3})(1-\frac{2}{r})} |t|^{-\frac{1}{3}(1-\frac{2}{r})} \| f \|_{L^{\frac{r}{r-1}}_h}.
\end{equation*}
\end{proof}

\begin{remark}
Our proof of \Cref{prop:main} and \Cref{strichartz} is adapted from \cite{hong2021finite}. The proof of \Cref{prop:main} differs from that of \cite{hong2021finite} in that we use $\epsilon > 0$ when applying \Cref{lemma:zygmund} whereas \cite{hong2021finite} used $\epsilon = 0$, which may lead to the error bound $A_\epsilon$ to blow up. This subtlety could easily be circumvented by choosing $0 < \epsilon < \frac{1}{2}$. Furthermore note that the right-hand side (RHS) of \eqref{strichartzest} is measured in $H^{\frac{2}{q}+\epsilon}_h$ whose Sobolev regularity is independent of $\alpha$. The $\epsilon>0$ is a result of the non-endpoint Sobolev embedding.
\end{remark}

Without the derivative loss of $1-\frac{\alpha}{3}$ in \Cref{prop:main}, it can be shown that the family $\{U_h(t)\}_{h>0}$ fails to be uniformly bounded. Our approach modifies the argument of Ignat and Zuazua in \cite{ignat2009numerical} based on the estimation of the oscillatory integral. The authors utilized a result in \cite{magyar2002discrete}, which proved the norm equivalence of band-limited functions with continuous Fourier transform. Since the functions on periodic lattices cannot be identified with the aforementioned functions studied in \cite{magyar2002discrete}, we cannot directly use the norm-equivalence result. Nonetheless, the discrete dispersive estimates are not uniformly bounded in the periodic setting, and we give a sketch of proof for completion.

\begin{proposition}\label{blow-up}
    For any $T>0,\ \alpha \in (1,2]$, and $1 \leq p < q \leq \infty$, let
    \begin{equation}\label{h_hypothesis}
    0 < h \ll \min(T^{-\frac{1}{3-\alpha}}, T^{\frac{1}{\alpha}} |\alpha - 1|^{\frac{3(2-\alpha)}{2\alpha}},|\alpha - 1|^{\frac{2-\alpha}{2}}).    
    \end{equation}
Then there exists $f \in L^1_h$, depending on the previous parameters, such that\begin{equation*}
        \frac{\| U_h(T) f\|_{L^q_h}}{\| f \|_{L^p_h}} \geq T^{-\frac{1}{3}(\frac{1}{p} - \frac{1}{q})} h^{-(1-\frac{\alpha}{3})(\frac{1}{p} - \frac{1}{q})}.
    \end{equation*}
\end{proposition}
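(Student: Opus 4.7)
The plan is a Knapp-type example concentrated in frequency at the unique degenerate point of the lattice symbol. Recall from the proof of \Cref{prop:main} that $\sigma_h''$ vanishes uniquely on $(0,\pi/h)$ at $\xi_0 = (2/h)\arccos(\alpha^{-1/2})$, while $\sigma_h'''(\xi_0) \simeq h^{3-\alpha}$ uniformly for $\alpha$ bounded away from $1$. I would set the frequency scale
\begin{equation*}
\delta := T^{-1/3} h^{-(1-\alpha/3)}
\end{equation*}
and prescribe the test function by its discrete Fourier transform
\begin{equation*}
\mathcal{F}_h f(k) := \chi\!\left(\frac{k-\xi_0}{\delta}\right),\qquad \chi \in C_c^\infty([-1,1]),
\end{equation*}
so that $f$ is frequency-concentrated at $\xi_0$ on the cubic Airy scale. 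The three smallness conditions on $h$ in \eqref{h_hypothesis} are calibrated precisely to guarantee $\delta \gtrsim 1$, $1/\delta \gg h$, and $\delta \ll \xi_0$; together these ensure that the bump contains many integer frequencies, is resolved spatially by the lattice, and lies strictly in the cubic-regime window, away from the singular set $\{\sin(h\xi/2)=0\}$.

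Next I would compute the relevant norms. Riemann-sum comparison in the inverse Fourier transform gives $|f(x)| \simeq \delta\,\langle \delta x\rangle^{-N}$, whence by direct summation
\begin{equation*}
\|f\|_{L^p_h} \simeq \delta^{1-1/p},\qquad 1 \leq p \leq \infty.
\end{equation*}
To estimate $U_h(T)f$, I would evaluate at $x_* := T\sigma_h'(\xi_0)$: the Taylor expansion of the phase $\phi(\xi) := -T\sigma_h(\xi) + \xi x_*$ around $\xi_0$ has a vanishing linear term (by the choice of $x_*$) and a vanishing quadratic term (by definition of $\xi_0$), leaving only the cubic $-\tfrac{T}{6}\sigma_h'''(\xi_0)(\xi-\xi_0)^3$. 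The calibration of $\delta$ is precisely the one making $T|\sigma_h'''(\xi_0)|\delta^3 \simeq 1$, so the cubic phase is $O(1)$ throughout the support of the bump and the resulting Airy-type integral has size $\simeq \delta$. Approximating the discrete kernel sum by this integral via \Cref{lemma:zygmund} on the two monotonicity subintervals of $\phi'$ (exactly as in the proof of \Cref{prop:main}) then yields $|U_h(T)f(x_*)| \gtrsim \delta$. A complementary group-velocity computation, $\sigma_h'(\xi_0 + \eta) - \sigma_h'(\xi_0) \simeq \sigma_h'''(\xi_0)\eta^2/2$, gives time-$T$ spatial spread $\simeq T|\sigma_h'''(\xi_0)|\delta^2 = 1/\delta$; combined with $L^2_h$-unitarity $\|U_h(T)f\|_{L^2_h} \simeq \delta^{1/2}$ this yields
\begin{equation*}
\|U_h(T)f\|_{L^q_h} \simeq \delta^{1-1/q},\qquad 1 \leq q \leq \infty.
\end{equation*}
Dividing returns $\delta^{1/p-1/q} = T^{-(1/p-1/q)/3}h^{-(1-\alpha/3)(1/p-1/q)}$, which is the claimed ratio.

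The principal obstacle is the discrete-to-continuum passage at the Airy scale: \Cref{lemma:zygmund} controls the sum-to-integral error only up to $O(1)$, whereas the main term is of size $\delta$, so the construction is consistent only when $\delta \gg 1$, which is exactly what the first condition $h \ll T^{-1/(3-\alpha)}$ in \eqref{h_hypothesis} enforces. The remaining two conditions play the analogous role for the higher-order Taylor remainders $T\sigma_h^{(n)}(\xi_0)\delta^n$ with $n\geq 4$ and for the monotonicity hypothesis of \Cref{lemma:zygmund}, keeping the $\alpha$-dependent constants uniformly bounded as $\alpha \to 1^+$, where the degenerate frequency $\xi_0$ collapses into the singular set $\{\sin(h\xi/2)=0\}$ and the analysis becomes delicate.
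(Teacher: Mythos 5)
Your construction is essentially the paper's own proof: the paper first rescales to the unit lattice via $\tilde f(x)=f(hx)$ (so that $U_h(T)$ becomes $U_1(T/h^\alpha)$ and the factor $h^{-(\frac1p-\frac1q)}$ splits off) and then places a bump of Fourier width $\tau^{-1/3}$, $\tau\simeq T/h^\alpha$, at the inflection point $\xi_c=2\arccos(\alpha^{-1/2})$ of the unit-lattice symbol, which is exactly your packet of width $\delta=T^{-1/3}h^{-(1-\alpha/3)}$ centered at $\xi_0=\xi_c/h$ seen before rescaling, with the same role for the three conditions in \eqref{h_hypothesis}. The only point to tighten is that for finite $q$ the lower bound $|U_h(T)f|\gtrsim\delta$ must be asserted on the whole window $|x-T\sigma_h'(\xi_0)|\lesssim 1/\delta$ (the paper does this via a Mean Value Theorem bound on the phase, and your Taylor-expansion argument at $x_*$ extends verbatim to that window), rather than inferred from a single-point evaluation plus unitarity.
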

\begin{proof}

Given $f \in L^1_h$, consider the pullback $\tilde{f} \in L^1(\mathbb{Z}_{2M})$ where $\tilde{f}(x) = f(hx)$ for $x \in \mathbb{Z}_{2M} = \mathbb{Z}/(2M \mathbb{Z})$. Then the pullback of $U_h(T)f$ is $U_1(\frac{T}{h^\alpha}) \tilde{f}$ and
\begin{equation}\label{scaling}
    \frac{\| U_h(T) f\|_{L^q_h}}{\| f \|_{L^p_h}} = h^{-(\frac{1}{p} - \frac{1}{q})}\frac{\| U_1(\frac{T}{h^\alpha}) \tilde{f}\|_{l^q(I_M)}}{\| \tilde{f} \|_{l^p(I_M)}},
\end{equation}
where we identify the periodic functions on $\mathbb{Z}_{2M}$ with their restrictions on the fundamental domain $I_M = [-M,M) \cap \mathbb{Z}$ and drop the tilde notation henceforth.

Let $t = \frac{T}{h^\alpha}$ and $\tau > 0$ satisfy $\tau \gg |\alpha-1|^{-\frac{3}{2}(2-\alpha)}$ and $\tau \simeq t$. Define $\widehat{\psi_\tau} (\xi) = \tau^{\frac{1}{3}} \widehat{\psi}(\tau^{\frac{1}{3}}(\xi - \xi_c))$ where $\widehat{\psi} \in C^\infty_c (-1,1)$ is non-negative and $\int \widehat{\psi} = 1$; note that $\xi_c = 2 \cos^{-1} (\alpha^{-\frac{1}{2}}) > 0$ is the inflection point of $\phi$ with $h=1$, defined in \eqref{phase}. For $f_{\tau,M} := \psi_\tau \chi_{I_M}$, we have $\| f_{\tau,M}\|_{l^p(I_M)} \leq \| \psi_\tau \|_{l^p} \simeq \tau^{\frac{1}{3p}}$ where the implicit constant depends only on $\psi$. 

We claim that $|(U_1(t)f_{\tau,M})(x)| \gtrsim 1$ whenever $x \in \mathbb{Z}$ satisfies $|x - t \sigma_1^\prime(\xi_c)| \leq \frac{\tau^{\frac{1}{3}}}{8}$. The conclusion of \Cref{blow-up} follows immediately since $\| U_1(t)f_{\tau,M}\|_{l^q(I_M)} \gtrsim \tau^{\frac{1}{3q}}$. The linear evolution is given by
\begin{equation*}
    2\pi (U_1(t) f_{\tau,M})(x) = \int_{-\pi}^{\pi} e^{i\phi(\xi)}\widehat{\psi_\tau} (\xi)d\xi - \int_{-\pi}^{\pi} e^{i\phi(\xi)}\left(\widehat{\psi_\tau} (\xi) - \mathcal{F}[f_{\tau,M}](\xi)\right)d\xi.
\end{equation*}
By the Mean Value Theorem, the first term is bounded below as
\begin{align}\label{osc_int}
\left| \int_{- \pi}^{\pi} e^{i\phi(\xi)}\widehat{\psi_\tau} (\xi)d\xi \right| &\geq 1 - 2\tau^{-\frac{1}{3}} \sup_{\xi^\prime \in supp(\widehat{\psi_\tau})} |x - t \sigma_1^\prime(\xi^\prime)|\nonumber\\
&\geq 1 - 2\tau^{-\frac{1}{3}} |x - t \sigma_1^\prime(\xi_c)| - \left|\frac{t}{\tau}\right| \sup_{\zeta \in supp(\widehat{\psi_\tau})} |\sigma_1^{(3)}(\zeta)|.
\end{align}
By direct computation, it can be shown that $|\sigma_1^{(4)}(\cdot)|$ is monotonically decreasing on $(0,\xi_c)$, and therefore
\begin{equation*}
    |\sigma_1^{(3)}(\zeta)| \leq |\sigma_1^{(3)}(\xi_c)| + 2\tau^{-\frac{1}{3}} |\sigma_1^{(4)}(\xi_c - \tau^{-\frac{1}{3}})| = O(1),
\end{equation*}
where the upper bound, independent of $\alpha$, follows from the bound on $\tau$ and \eqref{h_hypothesis}; this calculation consists of standard algebra, and thus is omitted for the sake of exposition. Since $\tau \gg 1$, it follows that the RHS of \eqref{osc_int} $\geq \frac{1}{2}$. On the other hand, using $\sup\limits_{x \in \mathbb{R}}\langle x \rangle^{4}|\psi(x)| = O_{\psi}(1)$, the second term is bounded above as
\begin{equation*}
    \left| \int_{-\pi}^{\pi} e^{i\phi(\xi)}\left(\widehat{\psi_\tau} (\xi) - \mathcal{F}[f_{\tau,M}](\xi)\right)d\xi \right| \leq \sum_{x \in \mathbb{Z} \setminus I_M} |\psi(\tau^{-\frac{1}{3}}x)| \lesssim \tau^{-\frac{3}{\alpha} + \frac{4}{3}} \leq \tau^{-\frac{1}{6}},
\end{equation*}
by \eqref{h_hypothesis} where the implicit constant can be chosen uniformly in $\alpha$, and this shows the claim.

\end{proof}

\section{Convergence as $h \rightarrow 0$.}\label{proof_thm1}
The proof of \Cref{result} is presented. Our strategy is to directly estimate the difference between the solutions in $\mathbb{T}$ and $\mathbb{T}_h$. To address the subtlety that $u(t)$ and $u_h(t)$ are defined on different spaces, we lift $u_h(t)$ via linear interpolation. Since there is no canonical way to interpolate discrete data into continuum data or vice versa via discretization, there is flexibility in how the error is defined and computed. For example, the exact solution whose spatiotemporal frequency is concentrated at a single site admits linear convergence \eqref{exact_cts} in $L^2(\mathbb{T})$ and quadratic convergence \eqref{exact_discrete} in $L^2_h$. Whether or not other methods of error estimation yield similar results as \Cref{result} is left for further research. 

The technical lemmas used to prove the theorem are adapted from \cite{hong2021finite,hong2019strong} either directly or with minimal modifications. Though our main result focuses on $\alpha \in (1,2]$, we give a general statement.

\begin{lemma}\label{l0}
       
    Let $\alpha > 0,\ 0 \leq s \leq 2 + \alpha,\ 0 \leq \tau \leq t$, and $p > 1$. Then
    \begin{align}
        \| p_h U_h(t) u_{h,0} - U(t) u_0\|_{L^2(\mathbb{T})} &\lesssim |t| h^{\frac{2s}{2+\alpha}} (\| u_{h,0} \|_{H^s_h} + \| u_0 \|_{H^s(\mathbb{T})}) + \| p_h u_{h,0} - u_0\|_{L^2(\mathbb{T})},\label{l1}\\
        \| (p_h U_h(t-\tau) - U(t-\tau)p_h)F(\tau)\|_{L^2(\mathbb{T})} &\lesssim |t-\tau| h^{\frac{2s}{2+\alpha}} \| F(\tau) \|_{H^s_h(\mathbb{T})},\label{l2}\\
        \| p_h(|u_h|^{p-1}u_h) - |p_h u_h|^{p-1}p_h u_h\|_{L^2(\mathbb{T})} &\lesssim h^s \| u_h \|_{L^\infty_h}^{p-1} \| u_h \|_{H^s_h},\label{l3}\\
        \| p_h d_h f - f\|_{L^2(\mathbb{T})} &\lesssim h^{s} \| f \|_{H^{s}(\mathbb{T})}.\label{l4}
    \end{align}
\end{lemma}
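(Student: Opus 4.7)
The plan is to prove the four estimates separately, treating the purely approximation bounds \eqref{l4} and \eqref{l3} first and then reducing the propagator comparisons \eqref{l1} and \eqref{l2} to a Fourier-side symbol-error estimate.

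For \eqref{l4} I would identify $p_h d_h$ as an approximate Fourier multiplier on $\mathbb{T}$: acting on a test mode $e^{ikx}$, direct computation yields a symbol $m_h(k)$ satisfying $|1-m_h(k)| \lesssim \min(1,(hk)^s)$ in the admissible range, reflecting the second-order local accuracy of piecewise-affine interpolation of cell averages; Plancherel then closes the bound, with aliased modes controlled by the $O((hk)^{-2})$ decay of the hat-kernel symbol $\hat\Lambda(hk)$. For \eqref{l3} I would work interval-by-interval on $I_n = [x_n, x_n+h)$: both sides agree at the endpoints since $p_h u_h(x_n) = u_h(x_n)$, and on $I_n$ the function $p_h(|u_h|^{p-1}u_h)$ is affine while $|p_h u_h|^{p-1}p_h u_h$ is the nonlinear composite of an affine interpolant. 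Their pointwise difference on $I_n$ is bounded by $\|u_h\|_{\infty}^{p-1}\,|u_h(x_{n+1}) - u_h(x_n)|$ via the Lipschitz estimate $||z_1|^{p-1}z_1 - |z_2|^{p-1}z_2| \lesssim (|z_1|+|z_2|)^{p-1}|z_1-z_2|$. Squaring, integrating over $I_n$, summing, and invoking the fractional finite-difference estimate $\sum_n |u_h(x_{n+1})-u_h(x_n)|^2 \lesssim h^{2s-1}\|u_h\|_{H^s_h}^2$ (Plancherel on $\mathbb{T}_h^*$) then closes the bound.

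For the main dispersive estimate \eqref{l1} I would insert the pivot $U(t) p_h u_{h,0}$:
$$\|p_h U_h(t) u_{h,0} - U(t) u_0\|_{L^2(\mathbb{T})} \leq \|p_h U_h(t) u_{h,0} - U(t) p_h u_{h,0}\|_{L^2(\mathbb{T})} + \|p_h u_{h,0} - u_0\|_{L^2(\mathbb{T})},$$
using unitarity of $U(t)$ on the last term. The remaining quantity is a pure symbol-comparison error. Writing $[k]$ for the alias of $k$ in $\mathbb{T}_h^*$, the $k$-th continuum Fourier coefficient of the difference reads $\hat\Lambda(hk)\bigl(e^{-it\sigma_h([k])} - e^{-it|k|^\alpha}\bigr)\mathcal{F}_h u_{h,0}([k])$. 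I would then split at the threshold $K = h^{-2/(2+\alpha)}$. In the low-frequency regime $|k|\leq K$, the pointwise bound $|e^{-ia}-e^{-ib}| \leq |a-b|$ combined with the Taylor expansion $|\sigma_h(k) - |k|^\alpha| \lesssim h^2|k|^{\alpha+2}$ (valid on $|k|\leq \pi/h$) yields a contribution $\lesssim |t|h^2 K^{\alpha+2-s}\|u_{h,0}\|_{H^s_h} = |t|h^{2s/(2+\alpha)}\|u_{h,0}\|_{H^s_h}$. In the high-frequency regime $|k|>K$, the crude bound $|\cdot|\leq 2$ and the $H^s_h$ tail estimate give $\lesssim K^{-s}\|u_{h,0}\|_{H^s_h} = h^{2s/(2+\alpha)}\|u_{h,0}\|_{H^s_h}$, absorbable into the stated $|t|$-prefactor in the regime $|t|\gtrsim 1$ relevant to \Cref{result}. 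Estimate \eqref{l2} follows by the identical argument with $F(\tau)$ playing the role of both $u_{h,0}$ and $p_h u_{h,0}$, so the initial-data term drops out.

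The main obstacle is the aliasing introduced by $p_h$: a discrete function has Fourier support in the Brillouin zone $|k|\leq \pi/h$, but $p_h$ spreads its content over all integers $k$ with weights $\hat\Lambda(hk)$. Tracking the aliased contributions and verifying that the $O((hk)^{-2})$ decay of $\hat\Lambda$ suffices to absorb them against the $H^s$ control of the data requires some bookkeeping, though it is classical in spirit (see \cite{hong2021finite,hong2019strong}). The frequency threshold $K \sim h^{-2/(2+\alpha)}$ emerges from balancing the symbol-approximation error $h^2|k|^{\alpha+2}$ against the Sobolev tail $|k|^{-2s}$, and its sharpness underlies the order $\tfrac{2s}{2+\alpha}$ that appears in \Cref{result}.
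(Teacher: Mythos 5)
The paper offers no proof of this lemma to compare against: immediately before the statement it declares that these estimates ``are adapted from \cite{hong2021finite,hong2019strong} either directly or with minimal modifications,'' and nothing further is given. Your reconstruction is essentially the argument of those references, and it matches the machinery the paper itself displays elsewhere: the multiplier identity $\mathcal{F}[p_h f](k)=P_h(k)\mathcal{F}_h[f](k)$, the symbol expansion \eqref{alternating}--\eqref{upperbound} giving $|\sigma_h(k)-|k|^{\alpha}|\lesssim h^{2}|k|^{2+\alpha}$, and the frequency threshold $|k|\le h^{-\frac{2}{2+\alpha}}$ (the set $E$) all reappear in the proof of \Cref{sharpness}, where the same comparison is run in reverse to produce the lower bound. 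Your interval-by-interval Lipschitz argument for \eqref{l3} and the symbol-plus-aliasing treatment of \eqref{l4} are standard and correct in the range actually used ($s\le 1$); note that piecewise-linear interpolation of cell averages cannot beat $O(h^{2})$, so \eqref{l3}--\eqref{l4} with rate $h^{s}$ cannot hold up to $s=2+\alpha$ --- only the rate $h^{\frac{2s}{2+\alpha}}\le h^{2}$ of \eqref{l1}--\eqref{l2} is meaningful over the full stated range.

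The one point you flag deserves emphasis rather than apology: the high-frequency tail $|k|>K$ genuinely contributes $h^{\frac{2s}{2+\alpha}}\|u_{h,0}\|_{H^s_h}$ with no factor of $|t|$, and a single lattice mode at $|k|\sim h^{-1}$ evaluated at times $|t|\sim h^{\alpha}$ shows this loss is not an artifact of your particular splitting. What the method proves is therefore \eqref{l1}--\eqref{l2} with $|t|$ replaced by $1+|t|$ (equivalently, with a $\min$ between the low- and high-frequency bounds). This is precisely how the paper uses the lemma --- the first term of \eqref{estimate1} carries the prefactor $(1+|T|)$ --- so nothing downstream is affected, but the literal form of \eqref{l2}, which vanishes as $\tau\to t$, is slightly stronger than what this argument, or the cited references, deliver.
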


\begin{proof}[Proof of \Cref{result}]
In this proof, $\| \cdot \|$ denotes the norm in $L^2(\mathbb{T})$. Given $u_0 \in H^s(\mathbb{T})$, let $u(t) = S(t)u_0$ and $u_h(t) = S_h(t)d_h u_0$. It can be directly verified that $d_h$ defines a bounded linear operator from $L^2(\mathbb{T})$ to $L^2_h$ and $H^1(\mathbb{T})$ to $H^1_h$, and consequently it follows from interpolating the two estimates that $d_h:H^{s^\prime}(\mathbb{T}) \rightarrow H^{s^\prime}_h$ is bounded for any $s^\prime \in [0,1]$. Therefore the time of existence in \Cref{lwp_fdnls} is bounded from below since $T_h \sim_\alpha \| d_h u_0 \|_{H^s_h}^{-3} \gtrsim \| u_0 \|_{H^s}^{-3}$. Let $T^\prime(\| u_0 \|_{H^s})>0$ be the time of existence stated in \Cref{lwp_fnls} and take
\begin{equation}\label{local_time}
T:=\min\{T^\prime,\inf\limits_{h} T_h\} > 0.    
\end{equation}
By the triangle inequality and \eqref{l1}, \eqref{l2}, and \eqref{l3}, we have
\begin{align}
\| (p_h S_h(t) d_h  - S(t)) u_0\| &\leq \| (p_h U_h(t) d_h - U(t)) u_0\| +\int_0^t \| (p_h U_h(t-\tau) - U(t-\tau)p_h)(|u_h|^2 u_h)(\tau)\| d\tau \nonumber\\
&+ \int_0^t \| p_h(|u_h|^{2}u_h) - |p_h u_h|^{2}p_h u_h\| d\tau + \int_0^t \| |p_h u_h|^2 p_h u_h - |u|^2 u\| d\tau \nonumber\\
&=: I_1 + I_2 + I_3 + I_4 \label{decomposition}\\ 
\lesssim (1+|T|) h^{\frac{2s}{2+\alpha}}\| u_0 \|_{H^s} &+\int_0^t (|t-\tau| h^{\frac{2s}{2+\alpha}} + h^s) \| u_h \|_{L^\infty_h}^2 \| u_h \|_{H^s_h} d\tau + \int_0^t (\| p_h u_h \|_{L^\infty}^2 + \| u \|_{L^\infty}^2) \| p_h u_h - u \| d\tau,\label{estimate1}
\end{align}
having used the discrete Sobolev estimate $\| |u_h|^2 u_h\|_{H^s_h} \lesssim \| u_h \|_{L^\infty}^2 \| u_h \|_{H^s_h}$. Similar to the proof of \Cref{lwp_fdnls} where $X_T = C([0,T_h];L^2_h) \cap L^6([0,T_h];L^\infty_h)$, the nonlinear terms are estimated as 
\begin{equation*}
    \left|\left| \| u_h \|_{L^\infty_h}^2 \| u_h \|_{H^s_h} \right|\right|_{L^1_T} \leq T_h^{\frac{2}{3}} \| u_h \|_{X_T}^3 \lesssim \| d_h u_0 \|_{H^s_h} \lesssim \| u_0 \|_{H^s}.
\end{equation*}
Furthermore a direct estimation on the linear interpolation yields $\| p_h u_h \|_{L^\infty} \lesssim \| u_h \|_{L^\infty_h}$ since for $x_0 \in \mathbb{T}_h$ and $x \in [x_0,x_0 + h)$,
\begin{equation*}
    |p_h u_h (x)| \leq |u_h(x_0)| + \frac{|u_h(x_0+h)|+|u_h(x_0)|}{h} |x-x_0| \leq 3 \| u_h \|_{L^\infty_h}.
\end{equation*}
Altogether, 
\begin{equation*}
\eqref{estimate1} \lesssim (1+|T|) h^{\frac{2s}{2+\alpha}} \| u_0 \|_{H^s} + \int_0^t (\| u_h \|_{L^\infty_h}^2 + \| u \|_{L^\infty}^2) \| p_h u_h - u \| d\tau,    
\end{equation*}
and the Gronwall's inequality for $t \in [0,T]$ yields
\begin{equation*}
    \| p_h u_h(t) - u(t) \| \lesssim (1+|T|) h^{\frac{2s}{2+\alpha}} \| u_0 \|_{H^s} e^{\int_0^T \| u_h(\tau) \|_{L^\infty_h}^2 + \| u(\tau) \|_{L^\infty}^2 d\tau}. 
\end{equation*}
It suffices to show that there exists $C(\| u_0 \|_{H^s},\alpha)>0$ independent of $h$ such that $\| u_h \|_{L^2_T L^\infty_h}^2 + \| u \|_{L^2_T L^\infty}^2 \leq C$. By \Cref{lwp_fdnls},
\begin{equation*}
    \| u_h \|_{L^2_T L^\infty_h}^2 \leq T_h^{\frac{2}{3}} \| u_h \|_{X_T}^2 = O(1)
\end{equation*}
is independent of $h$. To show $\| u \|_{L^2_T L^\infty}^2 = O(1)$, note that
\begin{equation}\label{estimate3}
    \| u \|_{L^2_T L^\infty}^2 \leq T^{\frac{1}{2}} \| u \|_{L^4_T L^\infty}^2 \lesssim T^{\frac{1}{2}} \| u \|_{L^4_T W^{\frac{1}{4}+\epsilon^\prime,4}}^2,
\end{equation}
by the H\"older's inequality and the Sobolev embedding $W^{\frac{1}{4}+\epsilon^\prime,4}(\mathbb{T}) \hookrightarrow L^\infty(\mathbb{T})$ where $0 < \epsilon^\prime < s - \frac{3-\alpha}{4}$. Then \Cref{bilinear} is applied at the minimum regularity $\frac{2-\alpha}{4}$ and $0 < \epsilon \ll 1$ to obtain
\begin{equation*}
    \eqref{estimate3} \lesssim T^{\frac{1}{2}} \| u \|_{X_T^{\frac{1}{4}+\epsilon^\prime,\frac{1}{2}-\epsilon}} \| u \|_{X_T^{\frac{3-\alpha}{4}+\epsilon^\prime,\frac{1}{2}+\epsilon}} \lesssim T^{\frac{1}{2}}\| u \|_{X_T^{\frac{3-\alpha}{4}+\epsilon^\prime,\frac{1}{2}+\epsilon}}^2 \lesssim T^{\frac{1}{2}} \| u \|_{C_T H^s}^2 \lesssim T^{\frac{1}{2}} \| u_0 \|_{H^s}^2,
\end{equation*}
where the second and the third estimates reflect the embeddings $X_T^{s_1,b_1} \hookrightarrow X_T^{s_2,b_2}$, for $s_1 \geq s_2,\ b_1 \geq b_2$, and $X_T^{s,\frac{1}{2}+\epsilon} \hookrightarrow C([0,T];H^{s}(\mathbb{T}))$, respectively, and the last inequality follows from the proof of \Cref{lwp_fnls} based on the fixed point argument.
\end{proof}

\begin{remark}
Since there is no canonical way to define a numerical error, the convergence rate depends on the method of discretization and interpolation. As \eqref{interpolation}, we discretize data on a smooth domain by averaging over an interval of length $h$ and linearly interpolating discrete data. Though we do not take the following approach, \cite{bernier2019bounds,chauleur:hal-04142120} considered the pointwise projection of continuous functions onto $h\mathbb{Z}$ and the Shannon interpolation that takes the discrete convolution of discrete data against the sinc function. It is commented in \cite{chauleur:hal-04142120} that the Shannon interpolation is better suited to show convergence in higher Sobolev norms. It is of interest to show the continuum limit of our model in higher Sobolev norms, which would require a uniform-in-$h$ control of the Sobolev norms of discrete solutions. However the method of modified energy used in the previous references to obtain bounds on higher Sobolev norms is not directly applicable in our nonlocal case due to the complexity of the Leibniz rule for fractional derivatives.
\end{remark}

In \Cref{sharpness}, we combine the short-time dispersive control with an energy estimate tailored to the discrete-to-continuum interface to obtain the sharp rate $h^{\frac{\alpha}{2+\alpha}}$ for the error in the energy space $H^{\frac{\alpha}{2}}(\mathbb{T})$. The role of energy here parallels its use in fractional dispersive models to establish sharp global well-posedness results \cite{wang2018sharp}; in contrast, our objective is a rate-optimal local approximation calibrated to the periodic fractional lattice, rather than threshold optimality. A family of plane wave solutions in \Cref{appendix_exact} satisfies linear convergence (see \eqref{exact_cts}), and hence do not satisfy an estimate of the form \eqref{sharpness_estimate}. We first note that the mass and energy conservation for sufficiently regular data admits the global result.

\begin{proposition}\label{result2}
    Let $\alpha \in (1,2]$ and $u_0 \in H^{\frac{\alpha}{2}}(\mathbb{T})$. Then there exists $C_1,C_2 > 0$ depending only on $\| u_0 \|_{H^{\frac{\alpha}{2}}}$ and $\alpha$ such that the error estimate
    \begin{equation}\label{continuum_global}
        \| p_h S_h(t) d_h u_0 - S(t) u_0\|_{L^2(\mathbb{T})} \leq C_1  h^{\frac{\alpha}{2+\alpha}}e^{C_2 |t|} (1 + \| u_0 \|_{H^{\frac{\alpha}{2}}})^3,
    \end{equation}
    holds for all $t \in \mathbb{R}$.
\end{proposition}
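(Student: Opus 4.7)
The plan is to globalize \Cref{result} in the energy space by iterating the short-time error bound across slabs of length $T \sim T(\|u_0\|_{H^{\alpha/2}},\alpha)$, with a priori energy bounds preventing the Sobolev norms of either flow from growing across restarts. Because $\alpha \in (1,2]$ the cubic nonlinearity is strictly $L^2$-subcritical in one dimension, so in both the defocusing and focusing regimes the conservation of mass and of the Hamiltonians
\[
E[u] = \tfrac{1}{2}\|(-\Delta)^{\alpha/4} u\|_{L^2}^2 + \tfrac{\mu}{4}\|u\|_{L^4}^4, \qquad E_h[u_h] = \tfrac{1}{2}\|(-\Delta_h)^{\alpha/4} u_h\|_{L^2_h}^2 + \tfrac{\mu}{4}\|u_h\|_{L^4_h}^4,
\]
combined with the (continuum and discrete) Gagliardo--Nirenberg inequality $\|f\|_{L^4}^4 \lesssim \|f\|_{L^2}^{4-1/\alpha}\|(-\Delta)^{\alpha/4} f\|_{L^2}^{1/\alpha}$ should yield uniform bounds $\|u(t)\|_{H^{\alpha/2}(\mathbb{T})} + \|u_h(t)\|_{H^{\alpha/2}_h} \leq A(\|u_0\|_{H^{\alpha/2}},\alpha)$ for all $t$, with $A$ independent of $h$ thanks to the uniform mapping bound $\|d_h u_0\|_{H^{\alpha/2}_h} \lesssim \|u_0\|_{H^{\alpha/2}}$ and an $h$-uniform discrete Gagliardo--Nirenberg constant obtained by interpolation with the continuum.

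Next I partition $[0,|t|]$ into slabs of length $T$ and set $E_n := \|p_h u_h(nT) - u(nT)\|_{L^2(\mathbb{T})}$, aiming for a recursion of the form $E_{n+1} \leq K\bigl(E_n + C T h^{\alpha/(2+\alpha)}(1+\|u_0\|_{H^{\alpha/2}})^3\bigr)$. On the $n$th slab I reuse the Duhamel decomposition \eqref{decomposition} from the proof of \Cref{result}, except that the restarted data pair $(u_h(nT), u(nT))$ is no longer related by $d_h$; inserting $U(t-nT) p_h u_h(nT)$ into the linear comparison term and invoking $L^2$-unitarity of $U$ together with \eqref{l2} at $s=\alpha/2$ produces
\[
\|p_h U_h(t-nT) u_h(nT) - U(t-nT) u(nT)\|_{L^2(\mathbb{T})} \leq E_n + T\, h^{\frac{\alpha}{2+\alpha}}\|u_h(nT)\|_{H^{\alpha/2}_h}.
\]
The nonlinear terms $I_2$, $I_3$, $I_4$ in \eqref{decomposition} are bounded verbatim as in the proof of \Cref{result}, and by time-translation invariance of the Strichartz and $X^{s,b}_T$ machinery the constants involved depend only on $A$, not on $n$. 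A single Gr\"onwall step on the slab closes the recursion, and since the base case $E_0 = \|p_h d_h u_0 - u_0\|_{L^2}\lesssim h^{\alpha/2}\|u_0\|_{H^{\alpha/2}}\leq h^{\alpha/(2+\alpha)}\|u_0\|_{H^{\alpha/2}}$ (for $h\in(0,1)$) follows from \eqref{l4}, unwinding the geometric sum with $n \sim |t|/T$ converts $K^n$ into $e^{C_2|t|}$ and yields \eqref{continuum_global}. The range $t<0$ is handled symmetrically.

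The hard part will be keeping the polynomial dependence on $\|u_0\|_{H^{\alpha/2}}$ uniform across iterations: if the prefactor on each slab depended even weakly on $n$, the powers would compound and spoil the purely exponential growth in time. This is precisely where the uniform energy bound $A$ is indispensable, since it lets one replace $\|u(nT)\|_{H^{\alpha/2}}$ and $\|u_h(nT)\|_{H^{\alpha/2}_h}$ by $A$ at every restart, so the cubic Strichartz/$X^{s,b}$ estimate contributes the same factor of $(1+\|u_0\|_{H^{\alpha/2}})^3$ on every slab. A secondary technical point is the $h$-uniformity of the discrete Gagliardo--Nirenberg constant in the focusing case, which is needed to ensure the discrete energy $E_h$ remains coercive uniformly in $h$; the argument is standard but requires care with the Littlewood--Paley operators on $\mathbb{T}_h$ defined in \Cref{notation}.
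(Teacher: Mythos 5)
Your proposal is correct in substance, but it takes a noticeably heavier route than the paper. The paper's proof of \Cref{result2} is essentially one observation: since $\alpha>1$, the embedding $H^{\frac{\alpha}{2}}(\mathbb{T})\hookrightarrow L^\infty(\mathbb{T})$ together with conservation of mass and energy (and coercivity of the Hamiltonian via Gagliardo--Nirenberg in the focusing case) gives $\sup_t\big(\|u(t)\|_{L^\infty}+\|u_h(t)\|_{L^\infty_h}\big)\lesssim A(\|u_0\|_{H^{\alpha/2}},\alpha)$ uniformly in $h$, so one runs the Duhamel decomposition \eqref{decomposition} and a \emph{single} Gr\"onwall inequality on all of $[0,t]$: the inhomogeneous terms contribute a factor polynomial in $|t|$ times $h^{\frac{\alpha}{2+\alpha}}(1+\|u_0\|_{H^{\alpha/2}})^3$, and the Gr\"onwall exponent $\int_0^t(\|u_h\|_{L^\infty_h}^2+\|u\|_{L^\infty}^2)\,d\tau\leq C_2|t|$ produces the exponential directly --- no Strichartz estimates and no slab bookkeeping are needed, which is exactly the point of the remark in the paper's proof. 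Your slab iteration with the recursion $E_{n+1}\leq K(E_n+CTh^{\frac{\alpha}{2+\alpha}}A^3)$ reaches the same conclusion, and your handling of the restart (inserting $U(t-nT)p_hu_h(nT)$, using unitarity and \eqref{l2} at $s=\frac{\alpha}{2}$, and \eqref{l4} for the base case $E_0$) is sound; the price is that you retain the discrete Strichartz and $X^{s,b}$ machinery of \Cref{result} verbatim for $I_2,I_3,I_4$, where a direct $L^\infty$ bound from the conserved energy would do, and you must track $h$-uniformity of the discrete Gagliardo--Nirenberg constant, which you correctly flag. One small slip: the one-dimensional Gagliardo--Nirenberg exponent at regularity $\frac{\alpha}{2}$ is $\|f\|_{L^4}^4\lesssim\|f\|_{L^2}^{4-\frac{2}{\alpha}}\|(-\Delta)^{\frac{\alpha}{4}}f\|_{L^2}^{\frac{2}{\alpha}}$, not $\frac{1}{\alpha}$; since $\frac{2}{\alpha}<2$ for $\alpha>1$ the energy is still coercive and nothing in your argument breaks.
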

\begin{proof}
With initial data of finite energy, the Sobolev embedding $H^{\frac{\alpha}{2}}(\mathbb{T}) \hookrightarrow L^\infty(\mathbb{T})$ allows a more straightforward proof, without resorting to the Strichartz estimates, than the one presented in \Cref{proof_thm1}.    
\end{proof}

It is of interest to show the existence of $u_0 \in H^{\frac{\alpha}{2}}(\mathbb{T})$ such that
\begin{equation*}
    C_1(t,\| u_0 \|_{H^{\frac{\alpha}{2}}}) h^{\frac{2}{2+\alpha}} \leq \| p_h S_h(t) d_h u_0 - S(t) u_0\|_{L^2} \leq C_2(t,\| u_0 \|_{H^{\frac{\alpha}{2}}}) h^{\frac{2}{2+\alpha}},
\end{equation*}
for any $t \in \mathbb{R}$. Instead we derive a partial result that is local in time, which shows the sharpness of the convergence rate $\frac{\alpha}{2+\alpha}$ in \eqref{continuum_global}. 
\begin{proposition}\label{sharpness}
    Let $\alpha \in (1,2]$. There exists $0< T \ll 1$ such that for any $0< h \ll 1$, we have $u_0^{h} \in H^{\frac{\alpha}{2}}(\mathbb{T})$ with $\| u_0^{h} \|_{H^{\frac{\alpha}{2}}} = O(1)$, independent of $h$, satisfying    \begin{equation}\label{sharpness_estimate}
        \| p_h S_h(t) d_h u_0^h - S(t) u_0^h\|_{L^\infty_T L^2} \geq c(T,\alpha) h^{\frac{2}{2+\alpha}},
    \end{equation}
    where $c>0$ is independent of $h>0$.
\end{proposition}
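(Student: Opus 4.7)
The plan is to construct an explicit family of plane-wave initial data $u_0^h$ whose fNLS and fDNLS evolutions pick up a principal-mode phase mismatch of $\Theta(1)$ by time $T$, with amplitude chosen so that $\|u_0^h\|_{H^{\alpha/2}}=O(1)$. Fix $T\in(0,1)$ and for small $h>0$ set
\begin{equation*}
N=N(h):=\lfloor c_0\, T^{-1/(\alpha+2)} h^{-2/(\alpha+2)}\rfloor,\qquad A=A(h):=\langle N\rangle^{-\alpha/2},\qquad u_0^h(x):=A\,e^{iNx},
\end{equation*}
with $c_0=c_0(\alpha)>0$ pinned below. Since $Nh\simeq h^{\alpha/(\alpha+2)}\to 0$, one has $|N|<\pi/h$ for small $h$ (so $N$ is a legitimate lattice frequency), and $\|u_0^h\|_{H^{\alpha/2}(\mathbb{T})}=\sqrt{2\pi}\,A\langle N\rangle^{\alpha/2}=O(1)$ uniformly in $h$.

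Both evolutions are then explicit on this datum. On $\mathbb{T}$, $u(t,x)=A\,e^{iNx}\,e^{-it(N^\alpha+\mu A^2)}$ solves fNLS exactly, while on the lattice $d_h u_0^h=A\gamma(Nh)\,e^{iNx}|_{\mathbb{T}_h}$ with $\gamma(y):=(e^{iy}-1)/(iy)=\beta(y)e^{iy/2}$ and $\beta(y):=\sin(y/2)/(y/2)$, so $u_h(t,x)=A\gamma(Nh)\,e^{iNx}\,e^{-it(\sigma_h(N)+\mu A^2\beta(Nh)^2)}$ solves the discrete fNLS. A direct Fourier expansion of the hat interpolation kernel $\psi_h$ shows that the principal-mode Fourier coefficient of $p_h(e^{iN\cdot})$ on $\mathbb{T}$ equals $\beta(Nh)^2$, while aliased modes $N+2Mn$ with $n\ne 0$ carry coefficients of size $O((Nh)^2/n^2)$. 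Orthogonality of Fourier modes in $L^2(\mathbb{T})$ then yields
\begin{equation*}
\|p_h u_h(T)-u(T)\|_{L^2(\mathbb{T})}^2\ \geq\ 2\pi\,A^2\,\bigl|\beta(Nh)^3\,e^{i\Psi(T,h)}-1\bigr|^2,
\end{equation*}
where $\Psi(T,h)=Nh/2+T(N^\alpha-\sigma_h(N))+\mu TA^2(1-\beta(Nh)^2)$.

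The asymptotics as $h\to 0$ pin down $\Psi$: Taylor expansion gives $N^\alpha-\sigma_h(N)=\tfrac{\alpha}{24}h^2N^{\alpha+2}(1+O((Nh)^2))$, and the scaling $h^2N^{\alpha+2}=c_0^{\alpha+2}/T+O(1)$ yields $T(N^\alpha-\sigma_h(N))\to \alpha c_0^{\alpha+2}/24$. Both the shift $Nh/2=O(h^{\alpha/(\alpha+2)})$ and the nonlinear correction $\mu TA^2(1-\beta(Nh)^2)=O(h^{4\alpha/(\alpha+2)})$ vanish. Pinning $c_0=(12\pi/\alpha)^{1/(\alpha+2)}$ forces $\Psi(T,h)\to\pi/2$, so $|\beta(Nh)^3 e^{i\Psi(T,h)}-1|\to|i-1|=\sqrt{2}$, hence is bounded below by $1$ for $h$ small. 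Consequently
\begin{equation*}
\|p_h u_h(T)-u(T)\|_{L^2}\ \gtrsim\ A=\langle N\rangle^{-\alpha/2}\gtrsim h^{\alpha/(\alpha+2)}\geq h^{2/(\alpha+2)},
\end{equation*}
where the last step uses $\alpha\leq 2$ and $h\leq 1$; passing to the supremum over $t\in[0,T]$ gives \eqref{sharpness_estimate}.

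The main obstacle is ensuring that the nonlinear phase correction does not cancel the linear dispersive phase mismatch; direct scaling shows its size $h^{4\alpha/(\alpha+2)}$ is negligible next to the $\Theta(1)$ dispersive phase, which rules out degeneracy. Rounding $N$ to an integer perturbs $c_0$ by a factor $1+O(1/N)$, which can be absorbed into the choice of $c_0$ or, more robustly, handled by selecting the witness time $t^\star\in[0,T]$ realizing the $L^\infty_T$ norm so that $\Psi(t^\star,h)$ stays away from integer multiples of $2\pi$. This also covers the continuous dependence $t\mapsto\Psi(t,h)$ as one varies $t\in[0,T]$, completing the sketch.
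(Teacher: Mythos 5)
Your construction is essentially the same witness as the paper's: a single Fourier mode at the critical frequency $k_0\simeq T^{-1/(2+\alpha)}h^{-2/(2+\alpha)}$ with amplitude $\simeq k_0^{-\alpha/2}$ normalizing the $H^{\alpha/2}$ norm (the paper takes $\widehat{u}_0^h=\epsilon k_0^{-\alpha/2}\delta_{k_0}$, which is exactly your plane wave up to the tuning constant). Where you genuinely diverge is in how the lower bound is extracted. The paper runs the full Duhamel decomposition from the proof of Theorem 1.1, controls the nonlinear contributions $I_2,I_3,I_4$ by the Strichartz/Gronwall machinery as errors of size $O(\epsilon^3 T h^{\alpha/(2+\alpha)})$, and then lower-bounds the \emph{linear} flow difference on the Fourier side using $|\sin\theta|\geq\frac{2}{\pi}|\theta|$ on the frequency set $E$ --- this forces the smallness condition $\epsilon/2-\epsilon^3>0$ to keep the cubic error from swamping the linear gain. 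You instead exploit that plane waves solve both \eqref{fnls} and \eqref{fdnls} exactly (as catalogued in the paper's own Appendix B), so the error reduces to an explicit phase/amplitude mismatch at the principal mode; the nonlinear phase correction is computed to be $O(h^{4\alpha/(\alpha+2)})$ and hence manifestly negligible against the $\Theta(1)$ dispersive phase, with no Duhamel, Strichartz, or Gronwall input needed. This buys a shorter, self-contained argument valid for every fixed $T\in(0,1)$ (not just $T$ below the local existence time), at the cost of being tied to the special exactly-solvable datum; the paper's route is heavier but is the one that generalizes to data that are not exact solutions. Two points you should make explicit to close the argument: (i) by uniqueness in Propositions A.1 and A.3 the plane waves really are $S(t)u_0^h$ and $S_h(t)d_hu_0^h$, so your explicit formulas are legitimate; (ii) your family is $h$-dependent with $N(h)\to\infty$, which is what evades the paper's remark that \emph{fixed}-frequency plane waves converge linearly and therefore cannot witness \eqref{sharpness_estimate}. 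The final step $h^{\alpha/(\alpha+2)}\geq h^{2/(\alpha+2)}$ for $\alpha\leq 2$, $h\leq 1$ correctly lands on the exponent in the statement, and your $T$-dependence $c(T,\alpha)\simeq T^{\alpha/(2(2+\alpha))}$ matches the paper's.
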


\begin{proof}
We fix $\alpha$ once and for all and all constants resulting from the Sobolev embedding or the Strichartz estimates are considered as implicit constants. Fix $T>0$ as \eqref{local_time} corresponding to $\{u_0: \| u_0 \|_{H^{\frac{\alpha}{2}}} = O(1)\}$. Recall from the proof of \Cref{lwp_fdnls} that $\| u_h \|_{L^\infty_T H^{\frac{\alpha}{2}}} \lesssim \| d_h u_0 \|_{H_{h}^{\frac{\alpha}{2}}}$ where an explicit description of $u_0$ is given in \eqref{explicit_construction}. Let $t \in [0,T]$ and $h \ll_\alpha 1$. From \eqref{decomposition},
\begin{equation*}
\begin{split}
I_2(t) + I_3(t) &\lesssim \int_0^t (|t-\tau| h^{\frac{\alpha}{2+\alpha}} + h^\frac{\alpha}{2}) \| u_h \|_{L^\infty_h}^2 \| u_h \|_{H_h^{\frac{\alpha}{2}}} d\tau \lesssim \| u_h \|^3_{L^\infty_T H_h^{\frac{\alpha}{2}}}\int_0^t (|t-\tau| h^{\frac{\alpha}{2+\alpha}} + h^\frac{\alpha}{2}) d\tau\\
&\lesssim \| d_h u_0 \|_{H^{\frac{\alpha}{2}}_h}^3 (t^2 h^{\frac{\alpha}{2+\alpha}} + |t| h^{\frac{\alpha}{2}}) \lesssim \| u_0 \|_{H^{\frac{\alpha}{2}}}^3 T h^{\frac{\alpha}{2+\alpha}},  
\end{split}
\end{equation*}
where we shrink $T$, if necessary, to admit the last inequality. On the other hand,
\begin{equation*}
\begin{split}
\int_0^t (\| p_h u_h \|_{L^\infty}^2 + \| u \|_{L^\infty}^2) \| p_h u_h - u \| d\tau &\leq (\| p_h u_h \|_{L^2([0,t];L^\infty)}^2 + \| u \|_{L^2([0,t];L^\infty)}^2) \| p_h u_h - u \|_{L^\infty([0,t];L^2)}\\
&\lesssim \| p_h u_h - u \|_{L^\infty_T L^2},
\end{split}
\end{equation*}
since $\| p_h u_h \|_{L^2([0,t];L^\infty)}^2 + \| u \|_{L^2([0,t];L^\infty)}^2 = O(1)$ as in the proof of \Cref{result}. Hence by the triangle inequality, we have
\begin{equation}\label{sharpness_lb}
\| p_h S_h(\cdot) d_h u_0 - S(\cdot) u_0\|_{L^\infty_T L^2} \gtrsim \| p_h U_h(t) d_h u_0 - U(t) u_0\|_{L^2} - \| u_0 \|_{H^{\frac{\alpha}{2}}}^3 T h^{\frac{\alpha}{2+\alpha}}.
\end{equation}
Observe that
\begin{equation}\label{reverse_triangle}
    \| p_h U_h(t) d_h u_0 - U(t) u_0\|_{L^2} \geq \| p_h U_h(t) d_h u_0 - U(t) p_h d_h u_0\|_{L^2} - \| U(t)(p_h d_h u_0 - u_0)\|_{L^2} =: A_1 - A_2,
\end{equation}
where $A_2 \lesssim h^{\frac{\alpha}{2}} \| u_0 \|_{H^{\frac{\alpha}{2}}}$ by the unitarity of $U(t)$ and \eqref{l4}. By the Plancherel's Theorem,
\begin{equation}\label{A0_estimate}
    \sqrt{2\pi} A_1 = \| (e^{-it\left|\frac{2}{h}\sin \frac{hk}{2}\right|^{\alpha}} - e^{-it|k|^\alpha}) \mathcal{F}[p_h d_h u_0]\|_{l^2}.
\end{equation}
Recalling from \cite[Lemma 5.5]{hong2019strong} that $p_h$ is a Fourier multiplier with the symbol $P_h(k) = \left(\frac{\sin(\frac{hk}{2})}{\frac{hk}{2}}\right)^2$ on $\mathbb{Z}$, or equivalently $\mathcal{F}[p_h f](k) = P_h(k) \mathcal{F}_h [f] (k)$, we have $\mathcal{F}[p_h U_h(t) d_h u_0](k) = e^{-it\left|\frac{2}{h}\sin \frac{hk}{2}\right|^{\alpha}}\mathcal{F}[p_h d_h u_0](k)$ for any $k \in \mathbb{Z}$; although a slight abuse of notation is used here, i.e., $k \in \mathbb{T}_h^*$ or $k \in \mathbb{Z}$ depending on the context, since $e^{-it\left|\frac{2}{h}\sin \frac{hk}{2}\right|^{\alpha}}$ is $\frac{2\pi}{h}$-periodic, we proceed without introducing yet another notation. An estimation on the Fourier side yields
\begin{align}\label{A1_estimate}
\text{RHS of }\eqref{A0_estimate} & \geq \| (e^{-it\left|\frac{2}{h}\sin \frac{hk}{2}\right|^{\alpha}} - e^{-it|k|^\alpha}) \widehat{u}_0\|_{l^2} - \| (e^{-it\left|\frac{2}{h}\sin \frac{hk}{2}\right|^{\alpha}} - e^{-it|k|^\alpha}) (\mathcal{F}[p_h d_h u_0] - \widehat{u}_0)\|_{l^2} \nonumber\\
&\geq \| (e^{-it\left|\frac{2}{h}\sin \frac{hk}{2}\right|^{\alpha}} - e^{-it|k|^\alpha}) \widehat{u}_0\|_{l^2} - C h^{\frac{\alpha}{2}} \| u_0 \|_{H^{\frac{\alpha}{2}}},
\end{align}
where $C>0$ is by \eqref{l4}. The lower bound of the phase difference is estimated. By direct computation,
\begin{equation*}
    \left|e^{-it\left|\frac{2}{h}\sin \frac{hk}{2}\right|^{\alpha}} - e^{-it|k|^\alpha}\right| = 2\left|\sin\left(\frac{2^\alpha t}{h^\alpha}\left(\left|\sin \frac{hk}{2}\right|^\alpha - \left|\frac{hk}{2}\right|^\alpha\right)\right)\right|.
\end{equation*}
Now we approximate $\left| \sin \frac{\theta}{2}\right|^{\alpha} - \left| \frac{\theta}{2} \right|^\alpha$ for $|\theta| \leq \frac{1}{10}$ where $\theta = hk$. Let $\theta > 0$ for the case $\theta < 0$ follows similarly. Since $\sin (\frac{\theta}{2}) = \frac{\theta}{2}\left(1 - \frac{\theta^2}{24} + \frac{\theta^4}{1920} - R\right)$ where $R = O(\theta^6)$, it follows from the Taylor expansion, in $\epsilon$, of $\sin(\frac{\theta}{2})^\alpha = (\frac{\theta}{2})^\alpha (1-\epsilon)^\alpha$ where $\epsilon = \frac{\theta^2}{24} - \frac{\theta^4}{1920} + R$ that
\begin{equation}\label{alternating}
    \left| \sin \frac{hk}{2}\right|^{\alpha} - \left| \frac{hk}{2} \right|^\alpha = \left|\frac{hk}{2}\right|^\alpha \left(-\frac{\alpha}{24}(hk)^2 + \frac{\alpha(5\alpha-2)}{5760} (hk)^4 - O_\alpha((hk)^4))\right),
\end{equation}
and therefore
\begin{equation}\label{upperbound}
    \frac{2^\alpha |t|}{h^\alpha}\cdot\left|\left|\sin \frac{hk}{2}\right|^\alpha - \left|\frac{hk}{2}\right|^\alpha \right| \leq \frac{\alpha |t|}{24} h^2 |k|^{2+\alpha}.
\end{equation}
To apply the lower bound estimate 
\begin{equation*}
    |\sin \theta| \geq \frac{2}{\pi} |\theta|,\ \text{for any\ }|\theta| \leq \frac{\pi}{2},
\end{equation*}
let $\theta = \frac{2^\alpha t}{h^\alpha}\left(\left|\sin \frac{hk}{2}\right|^\alpha - \left|\frac{hk}{2}\right|^\alpha\right)$ and by \eqref{upperbound}, assume that $E$ holds where
\begin{equation*}
    E = \{k \in \mathbb{Z}:|k| \leq \left(\frac{12 \pi}{\alpha}\right)^{\frac{1}{2+\alpha}}|t|^{-\frac{1}{2+\alpha}} h^{-\frac{2}{2+\alpha}}\}.
\end{equation*}
Then by the trigonometric lower bound estimate and \eqref{alternating},
\begin{equation*}
    \left|e^{-it\left|\frac{2}{h}\sin \frac{hk}{2}\right|^{\alpha}} - e^{-it|k|^\alpha}\right| \geq \frac{2^{2+\alpha} |t|}{\pi h^\alpha}\cdot\left|\left|\sin \frac{hk}{2}\right|^\alpha - \left|\frac{hk}{2}\right|^\alpha\right| \geq \frac{\alpha |t|}{12\pi} h^2 |k|^{2+\alpha},
\end{equation*}
and using this lower bound estimate, we have
\begin{equation}\label{low_frequency2}
    \| (e^{-it\left|\frac{2}{h}\sin \frac{hk}{2}\right|^{\alpha}} - e^{-it|k|^\alpha}) \widehat{u}_0\|_{l^2} \geq \| (e^{-it\left|\frac{2}{h}\sin \frac{hk}{2}\right|^{\alpha}} - e^{-it|k|^\alpha}) \widehat{u}_0\|_{l^2(E)} \gtrsim |t| h^2 \| |k|^{2+\alpha}\widehat{u}_0\|_{l^2(E)}.
\end{equation}
Let $k_0 = |t|^{-\frac{1}{2+\alpha}} h^{-\frac{2}{2+\alpha}}$. For $\epsilon > 0$ to be determined below, define
\begin{equation}\label{explicit_construction}
\widehat{u}_0^h(k) = \epsilon k_0^{-\frac{\alpha}{2}} \delta_{k_0}(k)    
\end{equation}
where $\delta_{k_0}$ is the Kronecker delta function supported at $k = k_0$. Hence by \eqref{sharpness_lb}, \eqref{reverse_triangle}, \eqref{A1_estimate}, and \eqref{low_frequency2},
\begin{equation*}
\begin{split}
\| p_h S_h(\cdot) d_h u_0^h - S(\cdot) u_0^h\|_{L^\infty_T L^2} &\gtrsim T h^2 \| |k|^{2+\alpha}\widehat{u}^h_0\|_{l^2(E)} - \| u_0^h \|_{H^{\frac{\alpha}{2}}}^3 T h^{\frac{\alpha}{2+\alpha}} - \| u_0^h \|_{H^{\frac{\alpha}{2}}} h^{\frac{\alpha}{2}}\\
&\simeq (\epsilon T^{\frac{\alpha/2}{2+\alpha}} - \epsilon^3 T) h^{\frac{\alpha}{2+\alpha}} - \epsilon h^{\frac{\alpha}{2}}\\
&\gtrsim (\frac{\epsilon}{2} - \epsilon^3) T^{\frac{\alpha/2}{2+\alpha}} h^{\frac{\alpha}{2+\alpha}},
\end{split}
\end{equation*}
where the last inequality assumes $T \leq 1$ and $h$ sufficiently small depending on $\alpha$ and $T$; for example, $h \leq \left(\frac{T^{\frac{\alpha/2}{2+\alpha}}}{2}\right)^{\frac{4+2\alpha}{\alpha^2}}$ would do. The proof is complete by taking $0 < \epsilon < \frac{1}{\sqrt{2}}$.
\end{proof}

\begin{corollary}
Let $T>0$ be as \Cref{sharpness} and suppose
\begin{equation}\label{sharpness_cor}
    \sup_{\| u_0 \|_{H^{\frac{\alpha}{2}}(\mathbb{T}) }< R}\| p_h S_h(\cdot) d_h u_0 - S(\cdot) u_0\|_{L^\infty_T L^2} \lesssim_{T,R,\alpha} h^{p},
\end{equation}
for some $R>0,\ p>0$. Then $\max \{p:\eqref{sharpness_cor}\ \text{holds for all\ } h > 0\} = \frac{2}{2+\alpha}$.
\end{corollary}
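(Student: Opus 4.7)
The plan is to deduce the corollary directly from the matching upper and lower bounds of \Cref{result2} and \Cref{sharpness}. Set $p^* := \frac{2}{2+\alpha}$; I would show $\max p = p^*$ by proving the two inequalities $\max p \geq p^*$ and $\max p \leq p^*$ separately.

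For the direction $\max p \geq p^*$, I apply \Cref{result2} uniformly over $u_0$ in the ball $B_R := \{\|u_0\|_{H^{\alpha/2}(\mathbb{T})} < R\}$. Since the constant $C_1 e^{C_2 T}(1+R)^3$ depends only on $T,R,\alpha$, the global error bound supplied by that proposition gives
\begin{equation*}
\sup_{u_0 \in B_R} \|p_h S_h(t) d_h u_0 - S(t) u_0\|_{L^2(\mathbb{T})} \lesssim_{T,R,\alpha} h^{p^*},
\end{equation*}
uniformly for $t \in [0,T]$, and hence also in the $L^\infty_T L^2$ norm. Therefore \eqref{sharpness_cor} holds at $p = p^*$, so $p^*$ lies in the set whose maximum is sought.

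For the direction $\max p \leq p^*$, I argue by contradiction: suppose \eqref{sharpness_cor} holds for some $p > p^*$. The family $\{u_0^h\}_{h>0}$ from \Cref{sharpness} satisfies $\|u_0^h\|_{H^{\alpha/2}} = O(1)$ uniformly in $h$, so for $R$ taken larger than this uniform bound every $u_0^h$ lies in $B_R$. Consequently
\begin{equation*}
c(T,\alpha)\, h^{p^*} \leq \|p_h S_h(\cdot) d_h u_0^h - S(\cdot) u_0^h\|_{L^\infty_T L^2} \leq \sup_{u_0 \in B_R}\|p_h S_h(\cdot) d_h u_0 - S(\cdot) u_0\|_{L^\infty_T L^2} \lesssim_{T,R,\alpha} h^p.
\end{equation*}
This forces $c(T,\alpha) \lesssim h^{p - p^*}$ for all sufficiently small $h>0$, which fails as $h \to 0^+$ since $p - p^* > 0$. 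Hence no $p > p^*$ can satisfy \eqref{sharpness_cor}, and combining the two directions yields $\max p = p^*$.

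The corollary is essentially a repackaging of the two preceding propositions, and its content is not analytically deep once those are granted; the step I would watch most carefully is that the single ball radius $R$ is chosen compatibly across both directions — it is free in the upper-bound step (where it simply enters the implicit constant via $(1+R)^3$) but must be taken large enough in the lower-bound step to contain the explicit family $\{u_0^h\}$ produced in \Cref{sharpness}. Any other choice of $R$ is unproblematic because $\{u_0^h\}$ is $h$-independent in $H^{\alpha/2}$-norm.
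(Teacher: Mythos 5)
Your argument is the same as the paper's: the upper bound is read off from \Cref{result2} (which is uniform over the ball because its constant depends only on $\| u_0 \|_{H^{\frac{\alpha}{2}}}$ and $\alpha$), and the optimality comes from the explicit family of \Cref{sharpness}; your write-up merely makes the two directions explicit. One point to flag: \Cref{result2} gives the rate $h^{\frac{\alpha}{2+\alpha}}$, not $h^{\frac{2}{2+\alpha}}$, and for $\alpha<2$ one has $h^{\frac{\alpha}{2+\alpha}}\gg h^{\frac{2}{2+\alpha}}$ as $h\to 0$, so the upper-bound half as you wrote it (with $p^*=\frac{2}{2+\alpha}$) does not actually follow from \Cref{result2}. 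This mismatch is inherited from the paper itself: the statements of \Cref{sharpness} and of this corollary carry the exponent $\frac{2}{2+\alpha}$, whereas \Cref{result}, \Cref{result2}, and the final computation in the proof of \Cref{sharpness} all produce $\frac{\alpha}{2+\alpha}$, so it should be read as a typo, and with $p^*=\frac{\alpha}{2+\alpha}$ both halves of your argument close. A minor further remark: to exclude $p>p^*$ for an \emph{arbitrary} prescribed $R$ (not only large $R$) one should observe that the family \eqref{explicit_construction} has $H^{\frac{\alpha}{2}}$-norm $\simeq\epsilon$, so shrinking $\epsilon$ places it inside any ball of radius $R$; your ``take $R$ larger'' covers only one direction, though the paper's two-line proof is equally silent on this.
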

\begin{proof}
    By \Cref{result2}, $\frac{2}{2+\alpha}$ satisfies $\eqref{sharpness_cor}$ for any $R > 0$. Any $p > \frac{2}{2+\alpha}$ is not in the desired set by an explicit construction in \Cref{sharpness}.
\end{proof}
\begin{remark}
    The sharpness of the convergence rate is expected to be $O(h^{\frac{2s}{2+\alpha}})$ in $H^s(\mathbb{T})$ for $s < \frac{\alpha}{2}$, i.e., for data of infinite energy. The proof in this regime, given the technical difficulty due to the absence of the Sobolev embedding, is left for further research. 
\end{remark}

We have shown that the convergence rate of the numerical scheme given by \eqref{continuum_local} or \eqref{continuum_global} is sublinear at worst for general Sobolev data. In numerical computations using softwares, the high frequency components of $u_0$ are often truncated, and therefore the Fourier support of $\widehat{u}_0$ is assumed to be compact. To motivate further discussion on the relationship between numerical convergence and the compactness of Fourier support, see \Cref{appendix_exact} for concrete examples of exact solutions also considered in \cite{burq2002instability}. In the following proposition, the sharp linear convergence illustrated by the example \eqref{exactsolution} is generalized. More remains to be studied on the nonlinear evolution of Fourier modes on the lattice.

\begin{proposition}\label{compact_support}
    For $\alpha \in (1,2]$, assume $u_0 \in H^{\frac{\alpha}{2}}(\mathbb{T})$ and $k_{m} := \max\{|k|: k \in supp(\widehat{u}_0)\}<\infty$. Suppose there exist $T>0,\ k_c > 0$, and $h_0 > 0$ such that $supp(\mathcal{F}_h[u_h(t)]) \subseteq [-k_c,k_c]$ for all $|t| \leq T,\ h < h_0$. Then,    \begin{equation}\label{compact_support2}
        \| p_h S_h(t) d_h u_0 - S(t) u_0\|_{L^2(\mathbb{T})} \leq C h (k_{max})^{1-\frac{\alpha}{2}},
    \end{equation}
    where $k_{max} = \max (k_m,k_c)$ and $C = C(\| u_0 \|_{H^{\frac{\alpha}{2}}},\alpha) > 0$, for $t \in [0,T]$ and $h>0$ sufficiently small given by \eqref{h_bound}.
\end{proposition}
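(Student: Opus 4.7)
My strategy is to recycle the Duhamel decomposition $\|p_h S_h(t) d_h u_0 - S(t) u_0\|_{L^2} \le I_1+I_2+I_3+I_4$ from \eqref{decomposition}, but---thanks to the compact support of $\widehat{u}_0$ and of $\mathcal{F}_h[u_h(\tau)]$---to bypass the derivative-losing bounds \eqref{l1}--\eqref{l2} in favor of a direct Plancherel computation on the Fourier side. Two Taylor expansions carry the argument: writing $D_h(k)=(e^{ikh}-1)/(ikh)$ for the Fourier multiplier of $d_h$, one has $|P_h(k)D_h(k)-1|\lesssim |hk|$ and $\bigl|\sigma_h(k)-|k|^{\alpha}\bigr|\lesssim h^{2}|k|^{2+\alpha}$ uniformly in the regime $|hk|\ll 1$. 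The support hypotheses ensure this regime covers every frequency that contributes nontrivially as soon as $h$ lies below the explicit threshold recorded as \eqref{h_bound}.

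For $I_1$, I triangulate $I_1\le \|(p_h U_h(t) d_h - U(t) p_h d_h)u_0\|_{L^2}+\|p_h d_h u_0 - u_0\|_{L^2}$, using unitarity of $U(t)$ on the second summand. Plancherel with the first Taylor estimate and the interpolation $\||k|\widehat{u}_0\|_{\ell^2}\le k_m^{1-\alpha/2}\|u_0\|_{H^{\alpha/2}}$ yields the leading term $h\,k_m^{1-\alpha/2}\|u_0\|_{H^{\alpha/2}}$; the phase Taylor estimate renders the first summand strictly subleading, of order $|t|\,h^{2} k_m^{2+\alpha/2}$. The identical Fourier-side manipulation applied to $F(\tau)=|u_h|^{2}u_h(\tau)$, whose discrete Fourier support lies in $|k|\le 3k_c<M$ so that the nonlinearity induces no aliasing, gives $I_2\lesssim T^{2} h^{2} k_c^{2+\alpha}$, again subleading. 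In both $I_1$ and $I_2$ the aliased lobes of $p_h$ at $k_0+2Mj$ with $j\ne 0$ satisfy $P_h(k_0+2Mj)\approx (hk_0/(2\pi j))^{2}$ and are absorbed as $O(h^{4})$ corrections. Term $I_3$ is handled by \eqref{l3} with $s=1$ combined with the compact-support interpolation $\|u_h\|_{H^{1}_h}\lesssim k_c^{1-\alpha/2}\|u_h\|_{H^{\alpha/2}_h}$ and the embedding $H^{\alpha/2}_h\hookrightarrow L^{\infty}_h$, giving $I_3\lesssim T\,h\,k_c^{1-\alpha/2}$. Finally $I_4$ proceeds verbatim as in the proof of \Cref{result}, factoring out $\|p_h u_h\|_{L^{\infty}}^{2}+\|u\|_{L^{\infty}}^{2}=O(1)$ and closing by Gronwall.

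The main obstacle is not a missing ingredient but the bookkeeping of constants. A naive use of \eqref{l2} at Sobolev index $s=1+\alpha/2$---the choice that nominally produces the desired $h^{1}$ factor---would cost $\|F\|_{H^{1+\alpha/2}_h}\lesssim k_c$, falling short of the sharp exponent $k_c^{1-\alpha/2}$. Working directly on the Fourier side is what converts this $H^{1+\alpha/2}$-cost into the dimensional interpolation factor $k_c^{1-\alpha/2}$, at the one price of verifying that the aliased lobes of $p_h$ contribute only at strictly higher order in $h$; this verification is precisely what the compact-support hypothesis on $\mathcal{F}_h[u_h(\tau)]$ enables on the torus, where aliasing is absent on $\mathbb{R}$. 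Collecting the estimates, $h\,k_{max}^{1-\alpha/2}$ dominates provided $h\ll T^{-2}k_{max}^{-(1+3\alpha/2)}$, which is the content of \eqref{h_bound} and delivers \eqref{compact_support2}.
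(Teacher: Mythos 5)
Your proposal is correct and follows essentially the same route as the paper: the Duhamel decomposition from \eqref{decomposition}, Plancherel with the Taylor expansions of the interpolation multiplier and of the phase difference $\sigma_h(k)-|k|^{\alpha}$ on the compactly supported frequencies, the interpolation $\|\cdot\|_{H^1}\lesssim k_{max}^{1-\alpha/2}\|\cdot\|_{H^{\alpha/2}}$, \eqref{l3} with $s=1$ for the commutator term, and Gronwall to close. The only deviations are cosmetic: you measure the cubic nonlinearity in $L^2_h$ rather than $H^{\alpha/2}_h$, which yields the slightly more restrictive threshold $h\lesssim T^{-2}k_{max}^{-(1+3\alpha/2)}$ in place of the paper's $h\lesssim (T^2k_{max}^{1+\alpha})^{-1}$ in \eqref{h_bound}, and you track the aliased lobes of $p_h$ explicitly where the paper simply restricts the $\ell^2$ norm to $|k|\le h^{-1}$; neither affects the conclusion.
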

\begin{proof}
The argument proceeds as in the proof of \Cref{result} where we may assume $T>0$ is at most the time of existence stated in \Cref{result}. Take
\begin{equation}\label{h_bound}
    h < C_0 \min(h_0,(3k_{max})^{-1},(T k_{max}^{1+\alpha})^{-1},(T^2 k_{max}^{1+\alpha})^{-1}),
\end{equation}
where $C_0(\| u_0 \|_{H^{\frac{\alpha}{2}}},\alpha)>0$ to be determined. Since $2k_m < \frac{2\pi}{h}$, the period of $\mathbb{T}_h^*$, we have $supp(\mathcal{F}_h [d_h u_0]) = supp(\widehat{u}_0)$ by \eqref{dft}. Hence by the triangle inequality and \eqref{l4},
\begin{align}
\| p_h U_h(t) d_h u_0 - U(t) u_0\|_{L^2} &\leq \| p_h U_h(t) d_h u_0 - U(t) p_h d_h u_0\|_{L^2} + \| U(t)(p_h d_h u_0 - u_0)\|_{L^2} \nonumber\\
&\lesssim \| (e^{-it\left|\frac{2}{h}\sin \frac{hk}{2}\right|^{\alpha}} - e^{-it|k|^\alpha}) P_h(k)\mathcal{F}_h [d_h u_0]\|_{l^2_{\{|k| \leq h^{-1}\}}} + h \| u_0 \|_{H^1} \nonumber\\
&\lesssim \alpha |t| h^2 \| |k|^{2+\alpha} \mathcal{F}_h [d_h u_0]\|_{l^2_{\{|k| \leq h^{-1}\}}} + h \| u_0 \|_{H^1} \label{low_frequency3}\\
&\lesssim \| u_0 \|_{H^{\frac{\alpha}{2}}}\left(\alpha T k_m^{2+\frac{\alpha}{2}}h^2 + k_m^{1-\frac{\alpha}{2}}h\right) \lesssim \| u_0 \|_{H^{\frac{\alpha}{2}}} k_m^{1-\frac{\alpha}{2}} h, \label{linear_estimate}
\end{align}
where the third inequality estimates the phase difference as \eqref{upperbound} for $|k| \leq h^{-1}$ and the last inequality follows from $h < (\alpha T k_m^{1+\alpha})^{-1}$. Then the nonlinear terms are estimated. Since $supp(\mathcal{F}_h[|u_h|^2 u_h]) \subseteq [-3k_c,3k_c]$,
\begin{align}
\int_0^t \| (p_h U_h(t-\tau) - U(t-\tau)p_h)(|u_h|^2 u_h)(\tau)\|_{L^2} d\tau &\lesssim h^2 \int_0^t |t-\tau| \cdot \| |k|^{2+\alpha} P_h(k) \mathcal{F}_h [|u_h|^2 u_h]\|_{l^2_{\{|k| \leq h^{-1}\}}} d\tau \nonumber \\
&\lesssim h^2 (3k_c)^{2+\frac{\alpha}{2}} \int_0^t |t-\tau| \cdot \| |u_h(\tau)|^2 u_h(\tau) \|_{H_h^{\frac{\alpha}{2}}} d\tau \nonumber \\
&\lesssim C T^2 h^2 (3k_c)^{2+\frac{\alpha}{2}}, \label{linear_estimate2} 
\end{align}
where the first inequality is estimated as \eqref{low_frequency3} and the last inequality of $H_h^{\frac{\alpha}{2}}$ is by the Sobolev algebra property with $C$ depending only on the mass and energy of $u_0$ by the discrete Gagliardo-Nirenberg inequality. Another nonlinear term is estimated by \eqref{l3}.
\begin{equation}\label{commutator_estimate}
    \int_0^t \| p_h(|u_h|^{2}u_h) - |p_h u_h|^{2}p_h u_h\|_{L^2} d\tau \lesssim h \int_0^t \| u_h \|_{L^\infty_h}^2 \| u_h \|_{H^1_h} d\tau \lesssim \| u_h \|_{H_h^{\frac{\alpha}{2}}}^3 T h k_c^{1-\frac{\alpha}{2}}.  
\end{equation}
Combining \eqref{linear_estimate}, \eqref{linear_estimate2}, \eqref{commutator_estimate}, we have
\begin{equation*}
\begin{split}
\| p_h u_h(t) - u(t)\|_{L^2} &\lesssim h k_m^{1-\frac{\alpha}{2}}  + T^2 h^2 (3k_c)^{2+\frac{\alpha}{2}} + T h k_c^{1-\frac{\alpha}{2}} + \int_0^t (\| p_h u_h \|_{L^\infty}^2 + \| u \|_{L^\infty}^2) \| p_h u_h - u \|_{L^2} d\tau\\
&\lesssim (1+T) h (k_{max})^{1-\frac{\alpha}{2}} + \int_0^t (\| p_h u_h \|_{L^\infty}^2 + \| u \|_{L^\infty}^2) \| p_h u_h - u \|_{L^2} d\tau. 
\end{split}
\end{equation*}
That $\| p_h u_h \|_{L^2_T L^\infty}^2 + \| u \|_{L^2_T L^\infty}^2 = O(1)$ can be shown as in the proof of \Cref{result}, and therefore \eqref{compact_support2} holds by the Gronwall's inequality.
\end{proof}

\begin{remark}
The computation of numerical error could take place in different function spaces with different interpolation methods. While \Cref{compact_support} gives linear convergence in $L^2(\mathbb{T})$ for solutions with compact Fourier support, the faster quadratic convergence is not expected for linearly interpolated solutions. Indeed the second derivative acting on $p_h$ yields the Dirac delta functions, which are not square-integrable. Observe, however, that the exact solution \eqref{exactsolution} with the initial datum $u_0(x) = A |n|^{-s} e^{inx}$ converges quadratically in $L^2_h$ as can be shown explicitly as
\begin{align}\label{exact_discrete}
    \| u_h(t) - d_hu(t)\|_{L^2_h} &= \sqrt{2\pi} |A| |n|^{-s} \left|\frac{e^{ihn}-1}{ihn}\right|\cdot \left|e^{-it(|\frac{2}{h}\sin \frac{hn}{2}|^\alpha + \mu |A|^2 |n|^{-2s}|\frac{e^{ihn}-1}{ihn}|^2} - e^{-it(|n|^\alpha + \mu |A|^2 |n|^{-2s}}\right|\nonumber\\
    &= \left(\frac{\sqrt{2\pi}}{24} |A t n|^{2-3s} \cdot \left|\alpha |n|^{\alpha + 2s} + 2 \mu |A|^2\right|\right) h^2 + O(h^3).
\end{align}
\end{remark}

\section{Modulational instability of CW solutions.}\label{MI_section}

Whereas the previous sections studied the time evolution of arbitrary Sobolev data, this section investigates that of CW solutions. An analysis of MI of CW solution under \eqref{fdnls} is given. The regions of linear stability and instability are given analytically, and an explicit derivation of the gain spectrum is computed. \Cref{linstability_alpha_low,chaos,maxgain} are generated based on FFT while \Cref{instability_region} is due to Mathematica.
\begin{figure}[H]
\begin{subfigure}[h]{0.29\linewidth}
\includegraphics[width=\linewidth]{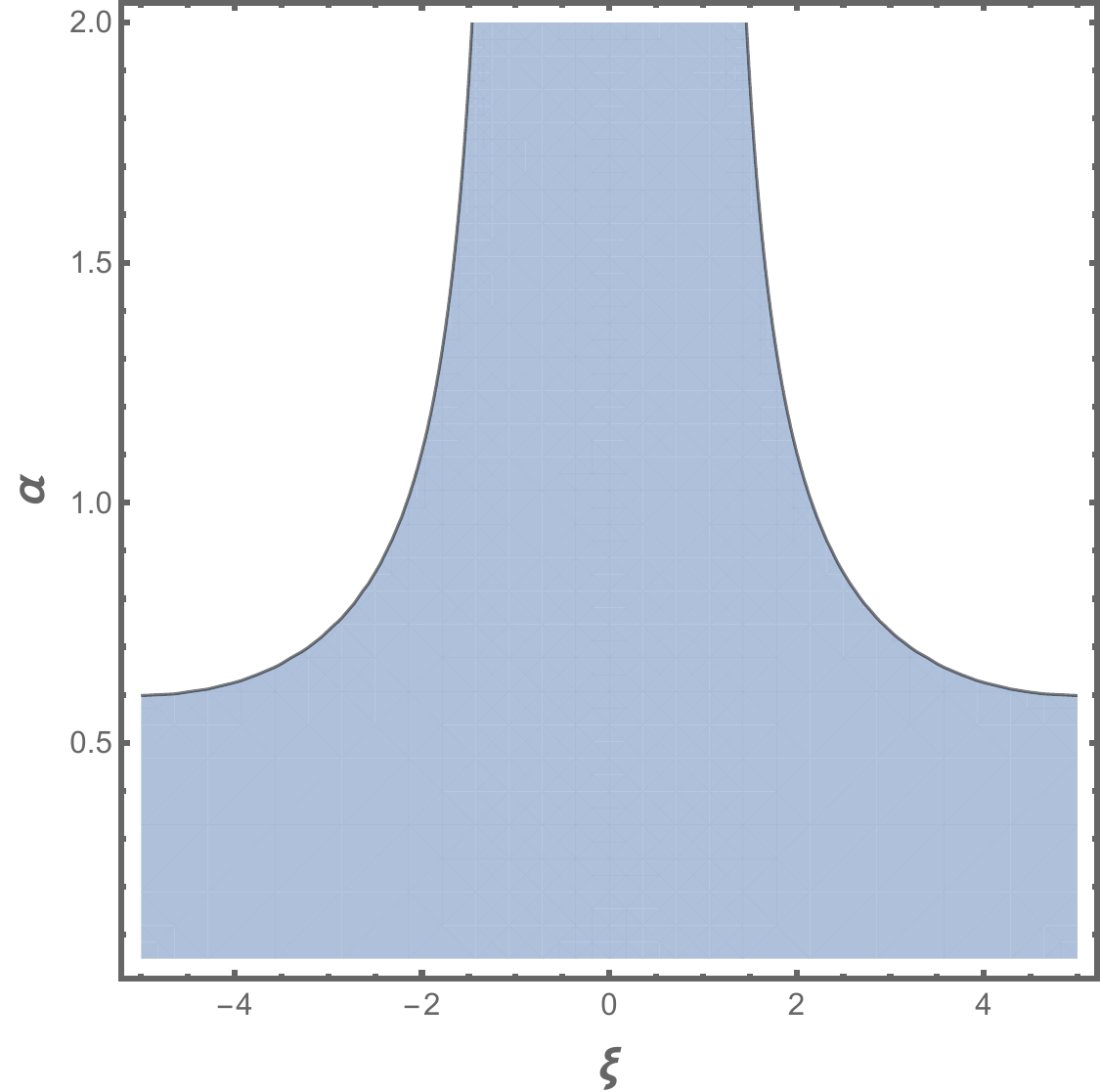}
\caption{$A = 1$}
\end{subfigure}
\hfill
\begin{subfigure}[h]{0.29\linewidth}
\includegraphics[width=\linewidth]{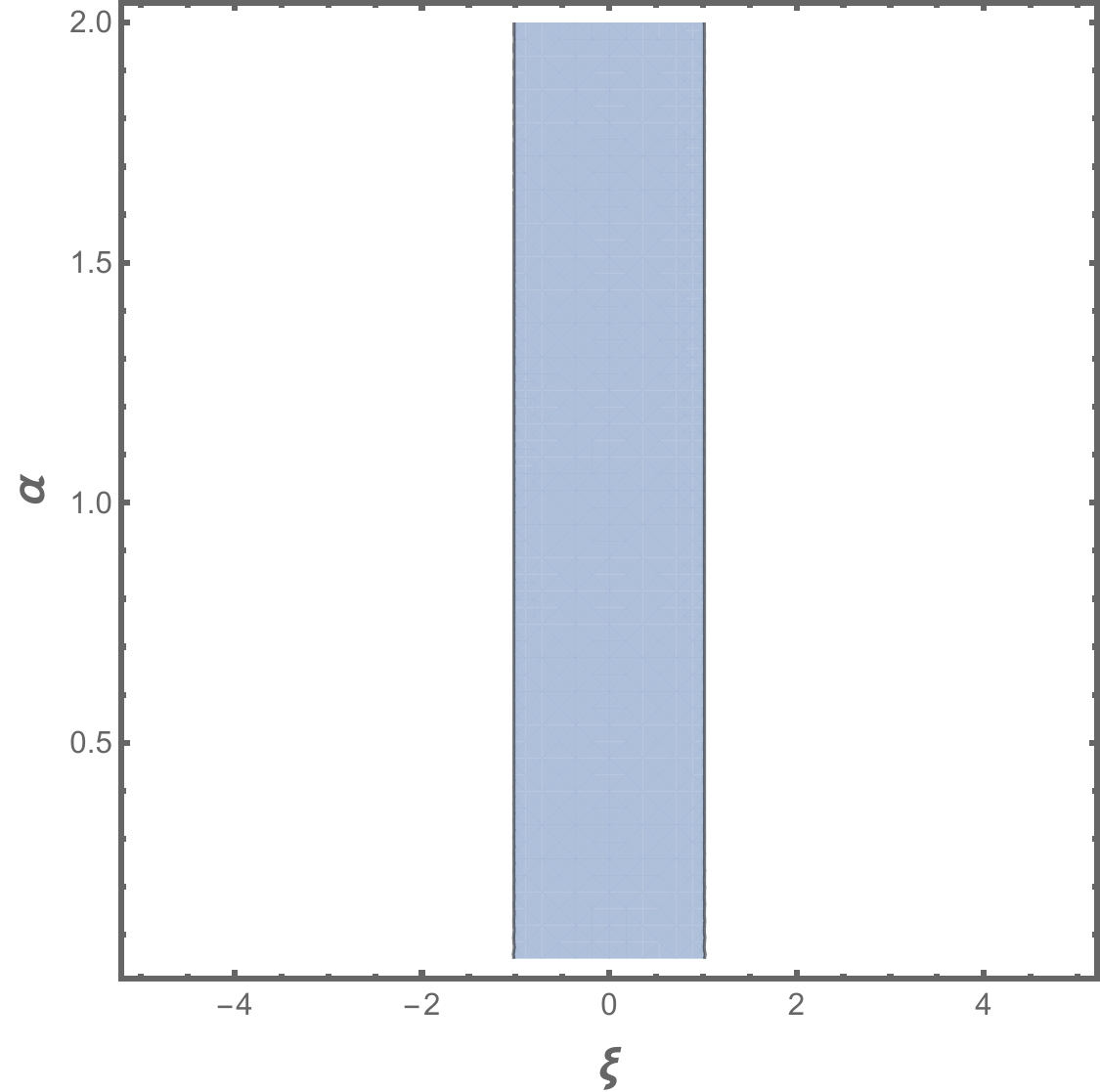}
\caption{$A = \frac{1}{\sqrt{2}}$}
\end{subfigure}
\hfill
\begin{subfigure}[h]{0.29\linewidth}
\includegraphics[width=\linewidth]{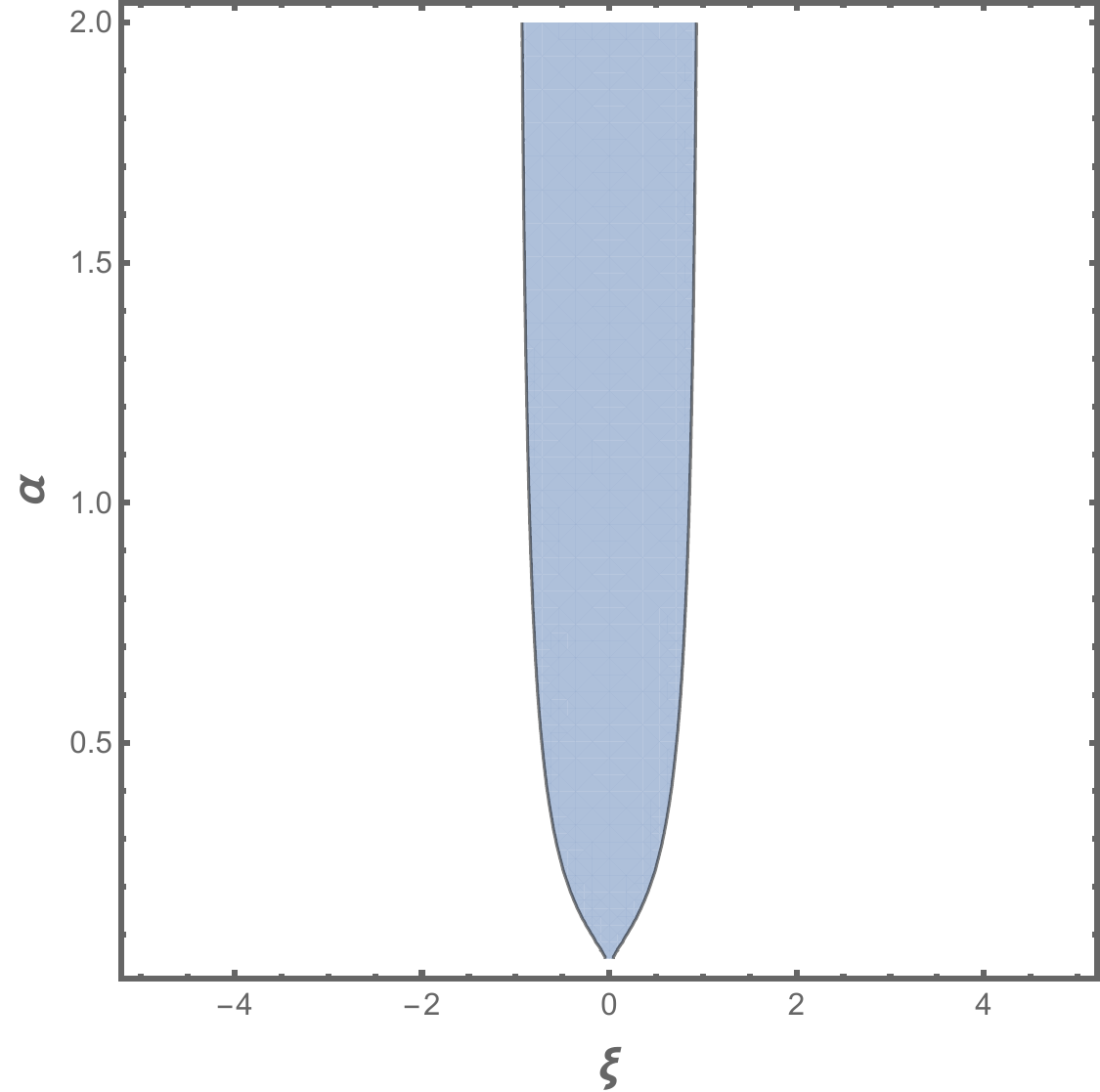}
\caption{$A = 0.5$}
\end{subfigure}
\begin{subfigure}[h]{0.29\linewidth}
\includegraphics[width=\linewidth]{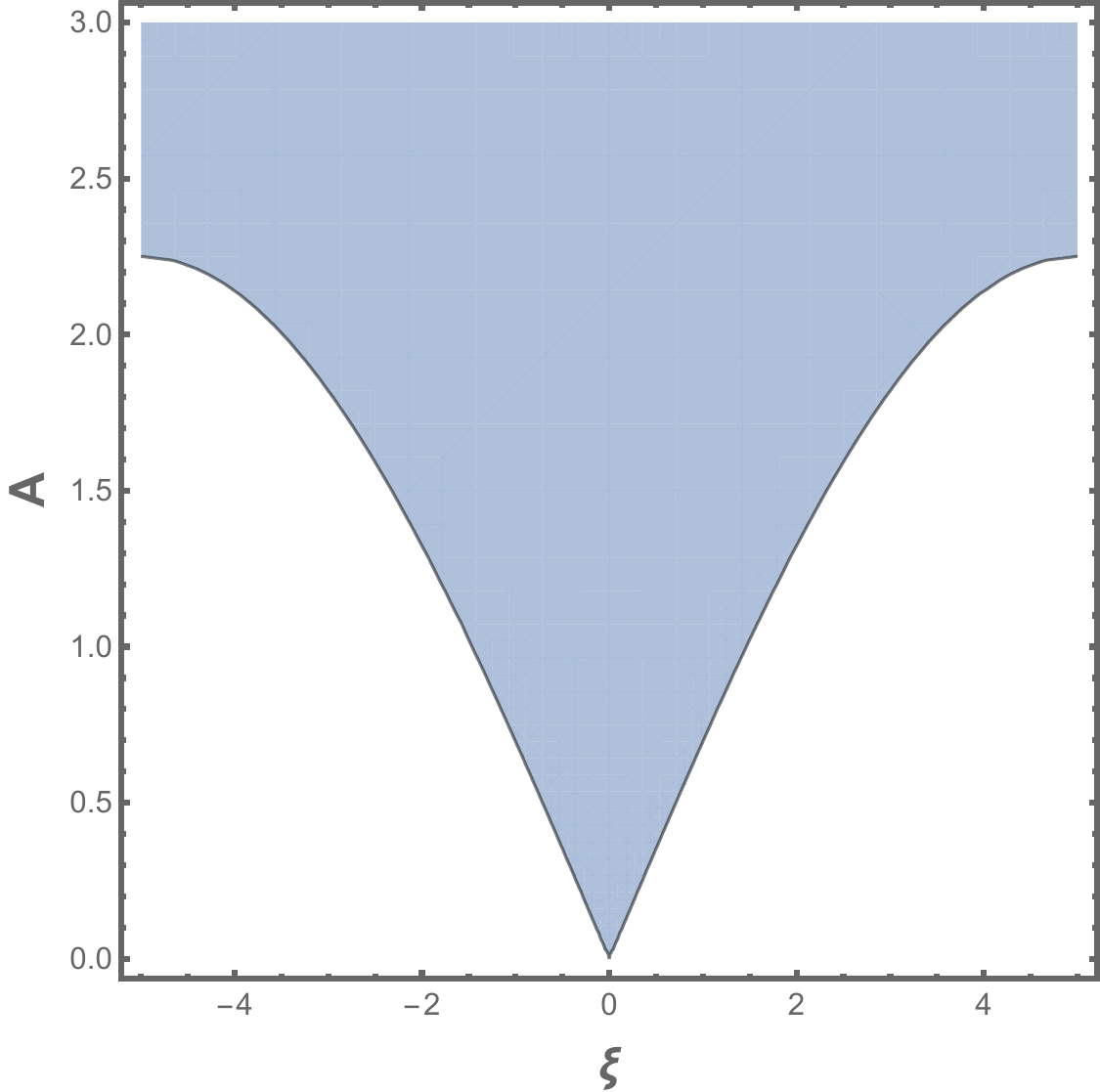}
\caption{$\alpha = 2$}
\end{subfigure}
\hfill
\begin{subfigure}[h]{0.29\linewidth}
\includegraphics[width=\linewidth]{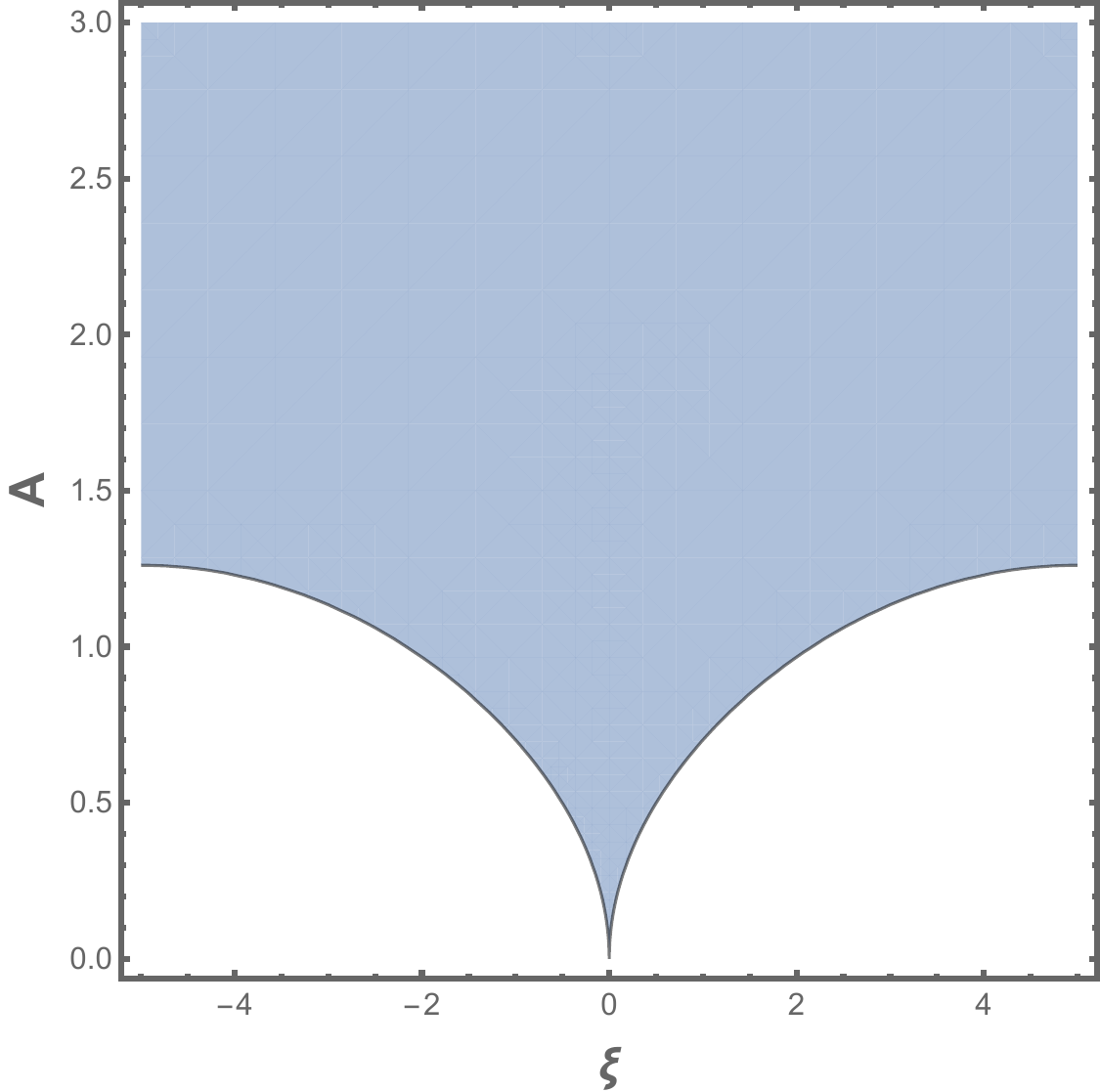}
\caption{$\alpha = 1$}
\end{subfigure}
\hfill
\begin{subfigure}[h]{0.29\linewidth}
\includegraphics[width=\linewidth]{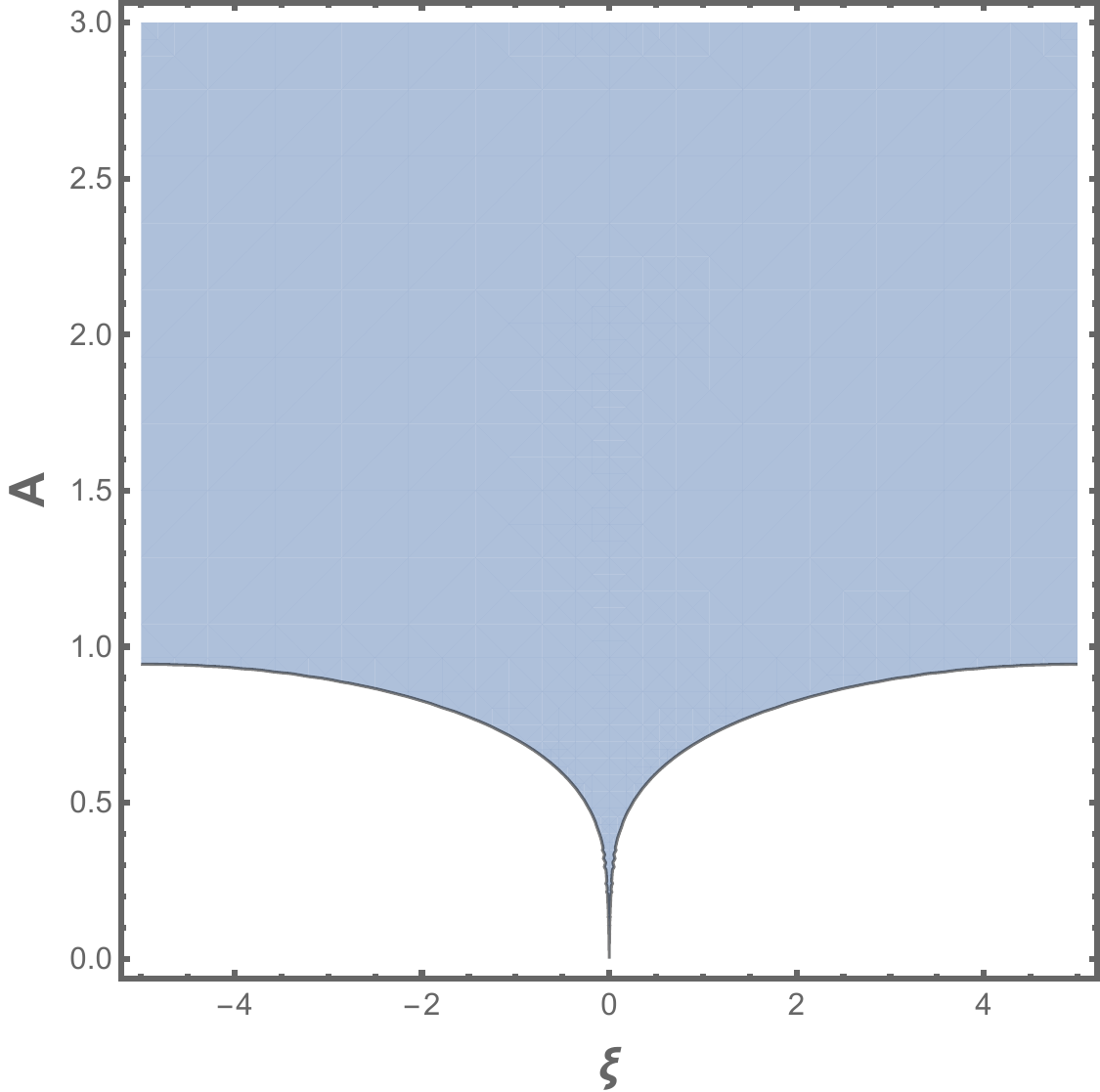}
\caption{$\alpha = 0.5$}
\end{subfigure}
\caption{The region of linear instability given by \eqref{region_instability} (in blue) is plotted in $(\xi,A)$ and $(\xi,\alpha)$ for $h = \frac{\pi}{5}$.}
\label{instability_region}
\end{figure}
Define $u_{h}^{cw} = Ae^{-i \mu |A|^2 t}$ and let $u_h = (A + \epsilon v_h (x,t))e^{-i \mu |A|^2 t}$ where $v_h(x,t) \in \mathbb{C},\ |\epsilon| \ll 1$, and $A \in \mathbb{R}$ without loss of generality. The $O(\epsilon)$ term yields
\begin{equation*}
    i \frac{d v_h}{dt} = (-\Delta_h)^{\frac{\alpha}{2}} v_h + \mu A^2 (v_h + \overline{v}_h).
\end{equation*}
Taking the real and imaginary parts, i.e. $v_h = f_h + i g_h$, we have 
\begin{equation}\label{perturbation2}
\frac{d}{dt}
\begin{pmatrix}
    f_h \\ g_h
\end{pmatrix}
=
\begin{pmatrix}
    0 &  (-\Delta_h)^{\frac{\alpha}{2}}\\
    -(-\Delta_h)^{\frac{\alpha}{2}} -2 \mu A^2 & 0
\end{pmatrix}
\begin{pmatrix}
    f_h \\ g_h
\end{pmatrix}.
\end{equation}
Taking the discrete Fourier transform both sides and the ansatz $\mathcal{F}_h[f_h] = P_h(k)e^{-i \Omega t},\ \mathcal{F}_h[g_h] = G_h(k)e^{-i \Omega t}$, \eqref{perturbation2} becomes an eigenvalue problem whose nontrivial solution $(P_h,G_h)$ exists if and only if $\Omega$ satisfies the dispersion relation given by
\begin{equation}\label{dispersion_relation}
    \Omega^2(k) = \left| \frac{2}{h} \sin (\frac{hk}{2}) \right|^{\alpha} \left(\left| \frac{2}{h} \sin (\frac{hk}{2}) \right|^{\alpha} + 2 \mu A^2\right).
\end{equation}

When $\mu = 1$, the system is linearly stable since $\Omega^2 \geq 0$ and henceforth assume $\mu = -1$. The region of linear instability and the corresponding gain spectrum are given by
\begin{align}
\left| \frac{2}{h} \sin (\frac{hk}{2}) \right|^{\alpha} &< 2A^2,\ |k| \leq \frac{\pi}{h},\label{region_instability} \\ 
G(\xi,A,\alpha,h) &:= \sqrt{\left| \frac{2}{h} \sin (\frac{h\xi}{2}) \right|^{\alpha} \left(2A^2 - \left| \frac{2}{h} \sin (\frac{h\xi}{2}) \right|^{\alpha}\right)}, \nonumber
\end{align}
where we denote $\xi \in \mathbb{R},\ k \in \mathbb{Z}$.

See \Cref{instability_region} that illustrates \eqref{region_instability}. Top row: when $A = \frac{1}{\sqrt{2}}$, the region is independent of $\alpha$. If $A \ll 1$ and $\alpha \ll 1$, then there exist no $k \in \mathbb{Z}\setminus \{0\}$ that satisfies \eqref{region_instability}, i.e., linear stability. On the other hand, for $A > \frac{1}{\sqrt{2}}$ and $\alpha \ll 1$, any $k \in \mathbb{T}_h$ satisfies \eqref{region_instability}, i.e., linear instability. Botton row: when $\alpha = 2$, $A(\xi)$ behaves as a kink and when $\alpha < 2$, $A(\xi)$ behaves as a cusp near $\xi = 0$. More precisely, $A \sim_h \frac{|\xi|^{\frac{\alpha}{2}}}{\sqrt{2}}$ as $\xi \rightarrow 0$. Note that the region approaches $\{|\xi|^\alpha < 2A^2\}$ as $h \rightarrow 0$. Hence for a fixed $h>0$,  the system is linearly unstable in the entire bandwidth if $|A|$ is sufficiently large, i.e., if $2A^2 > (\frac{2}{h})^\alpha$.

\vspace*{-3cm}
\begin{figure}[H]
\begin{subfigure}[b]{0.50\linewidth}
\includegraphics[width=\linewidth]{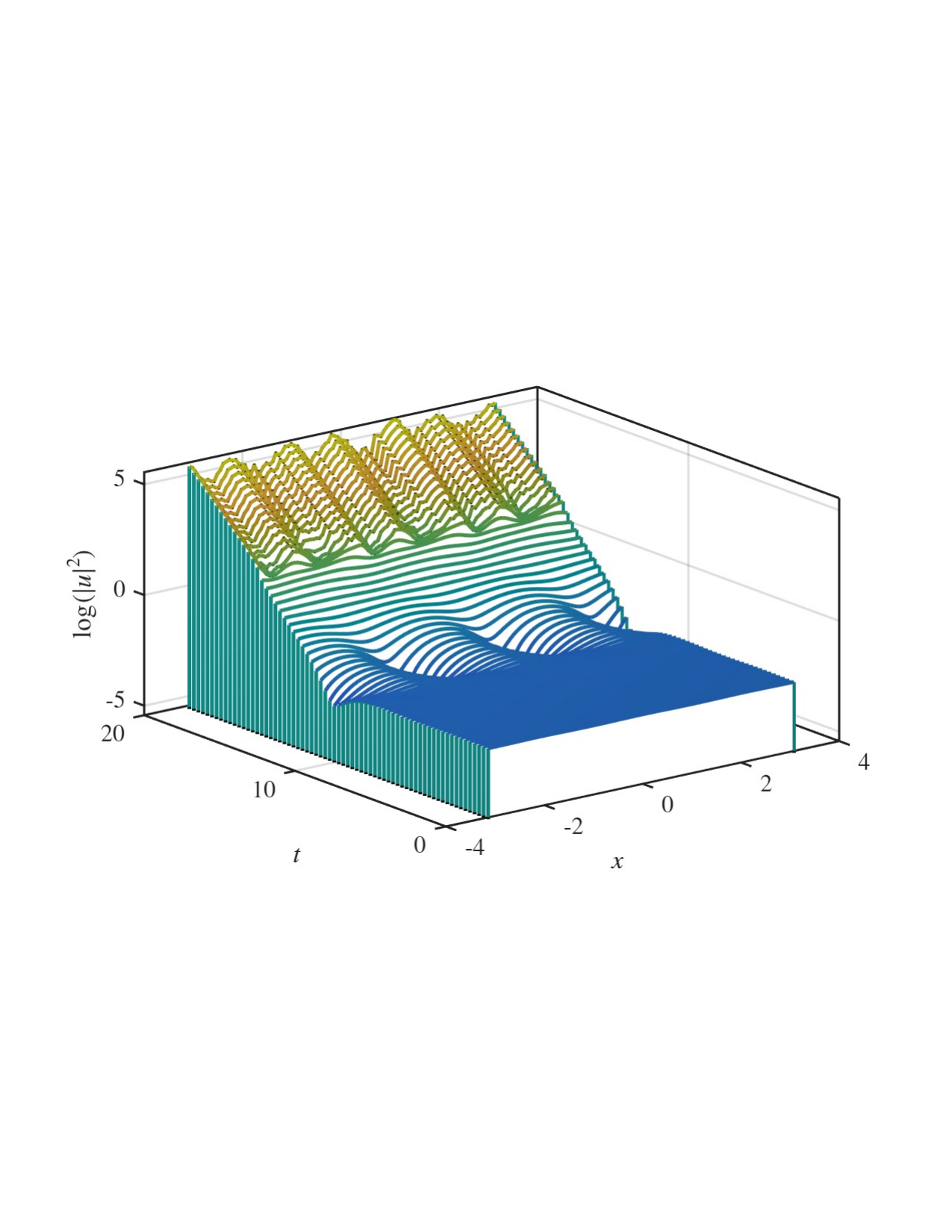}
\vspace*{-3.5cm}
\caption{$(\alpha,A) = (0.25,0.1)$}
\end{subfigure}
\hfill
\begin{subfigure}[b]{0.50\linewidth}
\includegraphics[width=\linewidth]{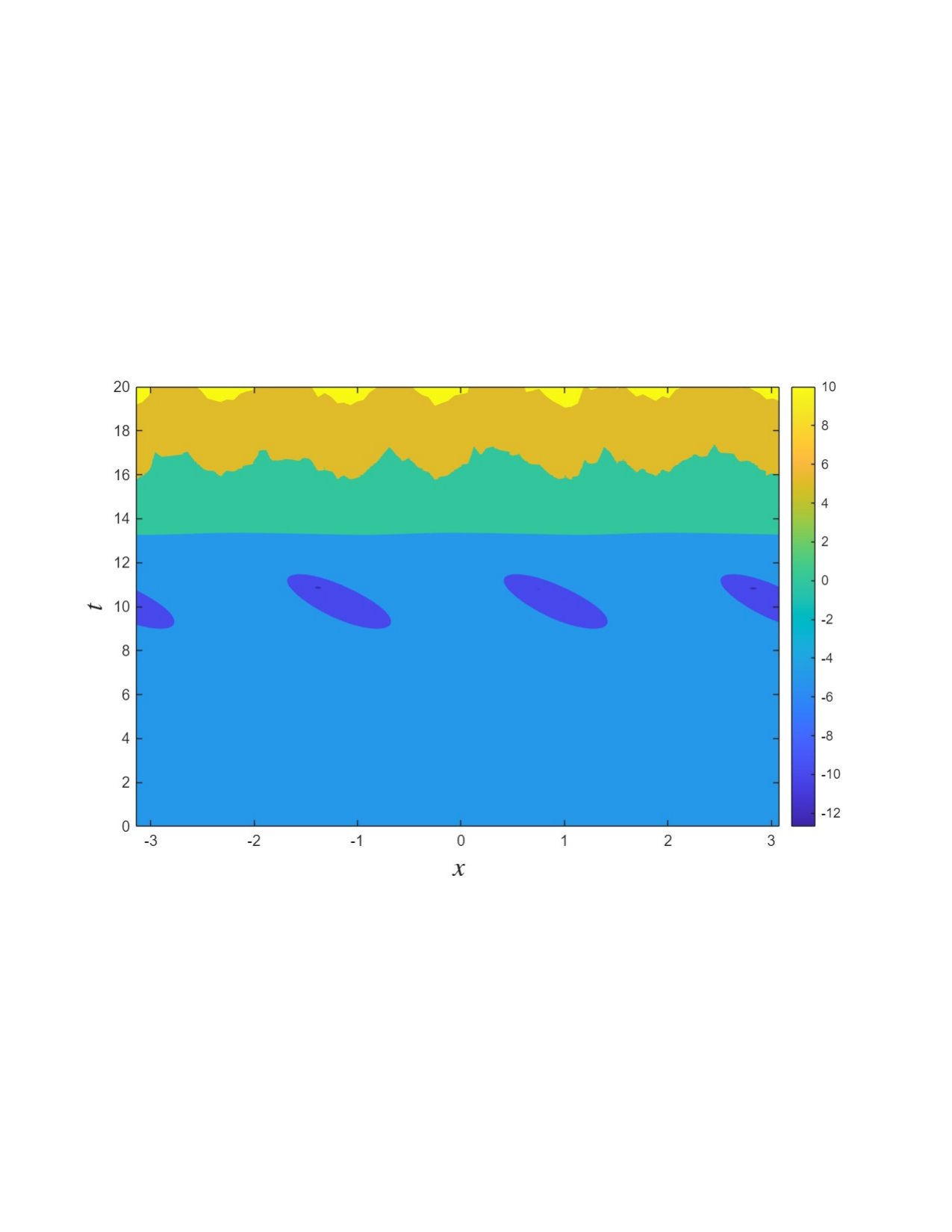}
\vspace*{-3.5cm}
\caption{$(\alpha,A) = (0.25,0.1)$}
\end{subfigure}

\begin{subfigure}[b]{0.50\linewidth}
\vspace*{-3cm}
\includegraphics[width=\linewidth]{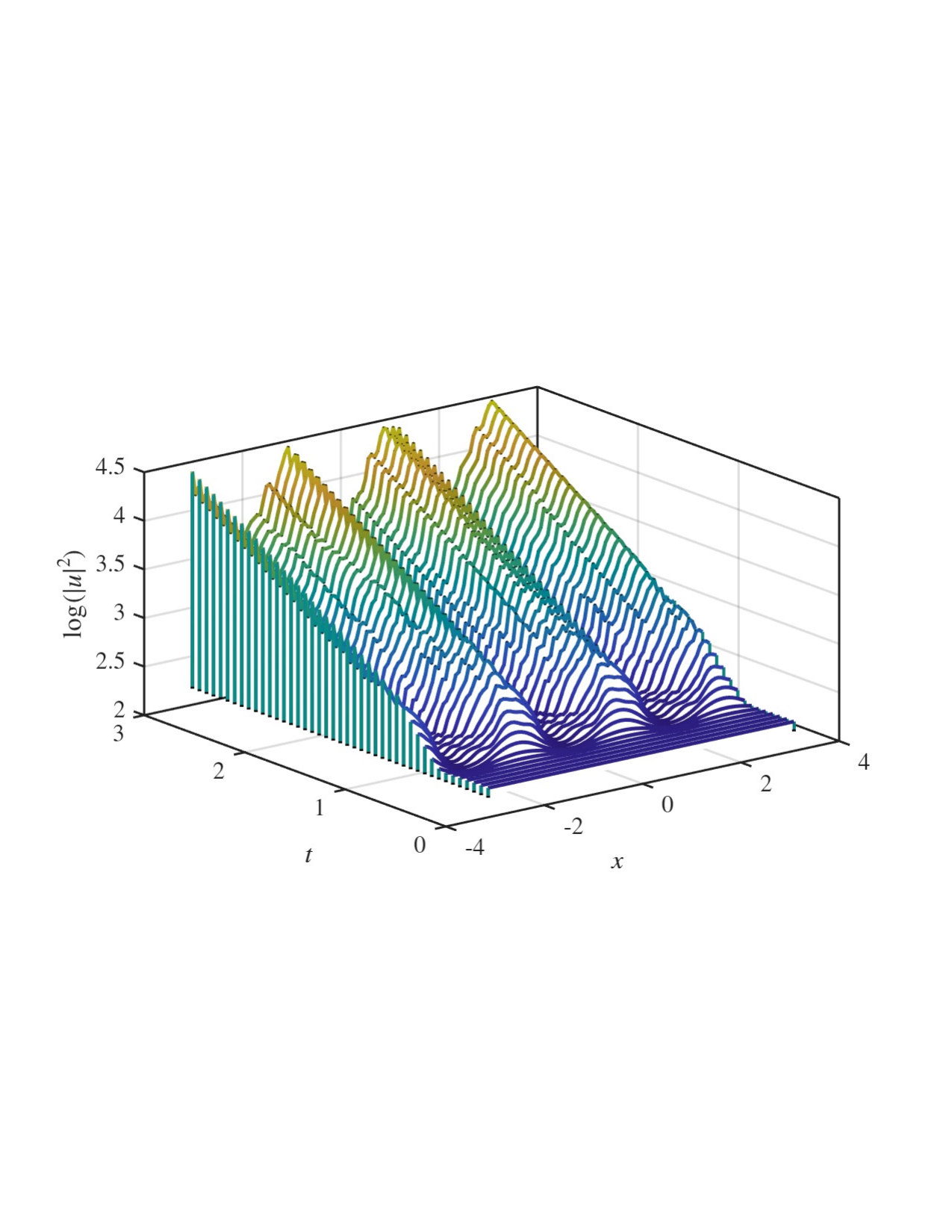}
\vspace*{-3.5cm}
\caption{$(\alpha,A) = (0.25,10)$}
\end{subfigure}
\hfill
\begin{subfigure}[b]{0.50\linewidth}
\vspace*{-3cm}
\includegraphics[width=\linewidth]{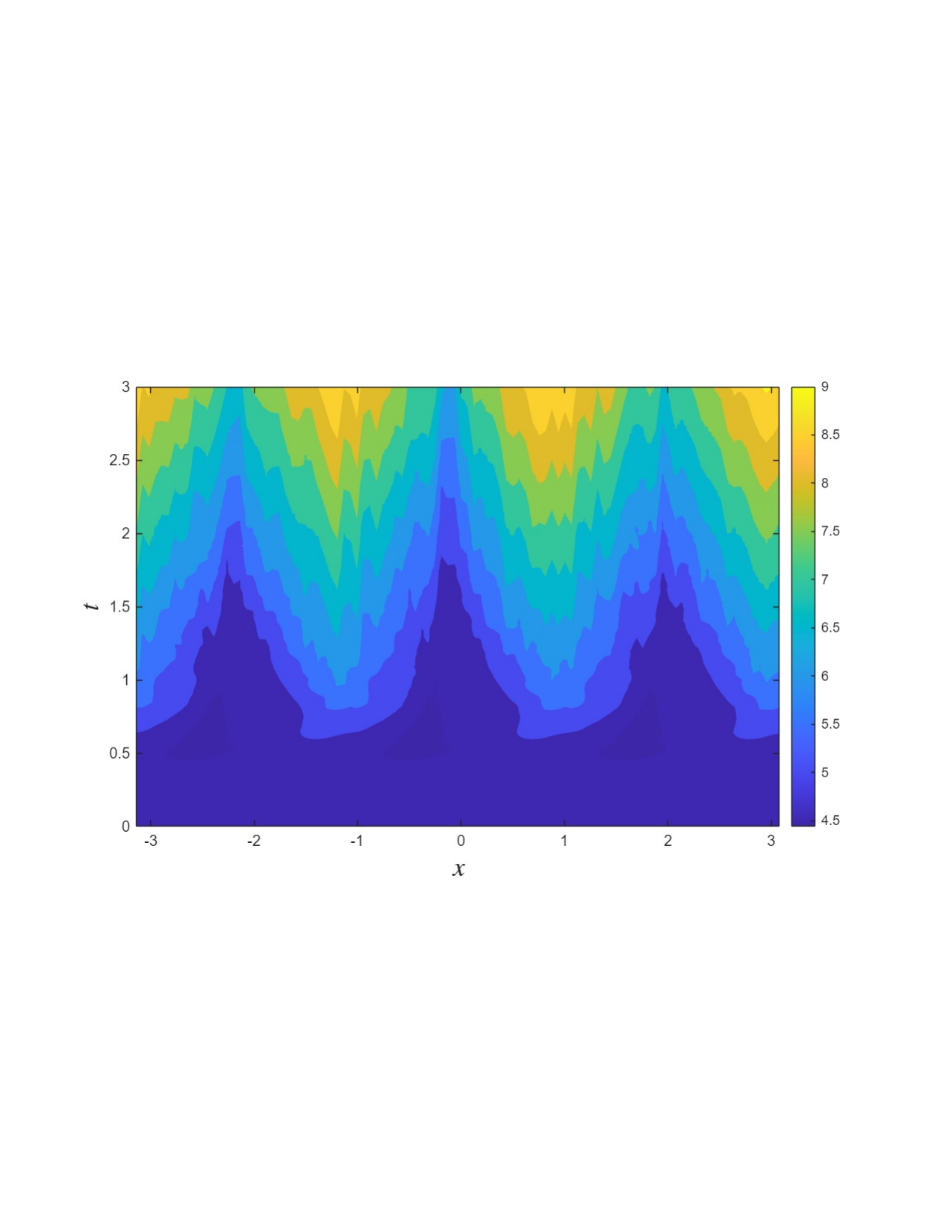}
\vspace*{-3.5cm}
\caption{$(\alpha,A) = (0.25,10)$}
\end{subfigure}

\caption{The log plots used $(\alpha,h,k) = (0.25,\frac{\pi}{50},3)$ and $u_0(x) = A + 10^{-3} e^{ikx}$ where the right plot is the contour of the left plot.}
\label{linstability_alpha_low}
\end{figure}

If \eqref{region_instability} holds, then the maximum exponential gain $\Omega_m$ that occurs at $k_m$, the fastest-growth frequency, can be computed explicitly by computing the derivative of \eqref{dispersion_relation} treating $k$ as real. By direct computation, for $\xi \in (0,\frac{\pi}{h})$,
    \begin{equation*}
        \frac{d}{d\xi} \Omega^2 (\xi) = \alpha h \cot(\frac{h\xi}{2}) \left( \frac{2}{h} \sin (\frac{h\xi}{2}) \right)^{\alpha}\left(\left( \frac{2}{h} \sin (\frac{h\xi}{2}) \right)^{\alpha}-A^2\right).
    \end{equation*}  
  
    If $(\frac{2}{h})^{\alpha} \leq A^2$, then $k_m = \pm M$ and
    \begin{equation}\label{gain}
        \Omega_m = \sqrt{\left(2A^2 - \left(\frac{2}{h}\right)^\alpha\right)\left(\frac{2}{h}\right)^\alpha}.
    \end{equation}
    
    If $(\frac{2}{h})^{\alpha} > A^2$, let $\xi_m \in (0,\frac{\pi}{h})$ be real such that $\left(\frac{2}{h}\sin(\frac{h\xi_m}{2})\right)^\alpha = A^2$, or equivalently, $\xi_m = \frac{2}{h} \sin^{-1} \left(\frac{h |A|^{\frac{2}{\alpha}}}{2}\right)$. It can be verified directly that $\pm\xi_m$ is the unique frequency that maximizes $-\Omega^2$. Therefore $|k_m| \in \{\lfloor \xi_m \rfloor, \lceil \xi_m \rceil\}$ and $\Omega_m = \sqrt{-\Omega^2(k_m)}$. Observe that $\Omega_m^\prime := \sqrt{-\Omega^2(\xi_m)} = A^2$, independent of $h,\alpha$. A summary of notable results follows.     
\begin{itemize}
    \item In the continuum limit, the region of instability is $\{|k|^\alpha < 2 A^2\}$. The region of linear instability for fDNLS strictly contains that of fNLS since $|\sin(z)| < |z|$ when $0 < |z| \leq \frac{\pi}{2}$. 
    \item The system is linearly stable if $|A| \ll 1,\ \alpha \ll 1$, which is in stark contrast to the system posed on $h\mathbb{Z}$ where given any $A>0$, there exists a real $k$ sufficiently small that satisfies \eqref{region_instability}. In \Cref{linstability_alpha_low}, the solution is linearly stable when $|A| \ll 1$. However nonlinearity begins to dominate from $t = 10$ with the emergence of $k$ troughs where $k=3$ was used in \Cref{linstability_alpha_low}. Indeed numerical experiments suggest the perturbation of $\epsilon e^{ikx}$, with $|\epsilon| \ll 1$ and $k \in \mathbb{T}_h^*$, triggers the emergence of $k$ troughs as the linear stability is supplanted by highly nonlinear wave evolution. For high $A$ values, the system is linearly unstable.
\end{itemize} 
\vspace{-18ex}

\begin{figure}[H]
\begin{subfigure}[b]{0.50\linewidth}
\includegraphics[width=\linewidth]{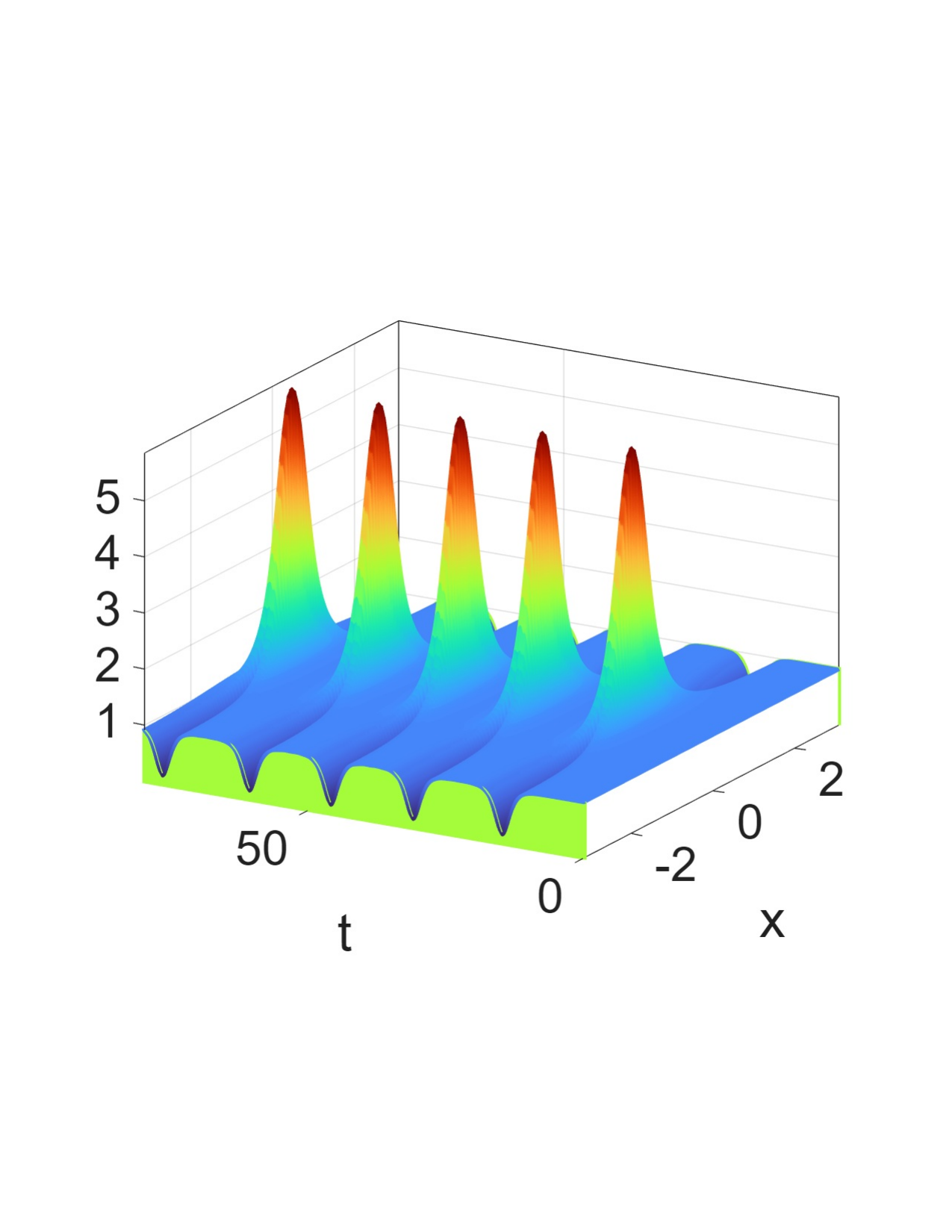}
\vspace*{-2.5cm}
\caption{$\alpha = 2.0$}
\end{subfigure}
\hfill
\begin{subfigure}[b]{0.50\linewidth}
\includegraphics[width=\linewidth]{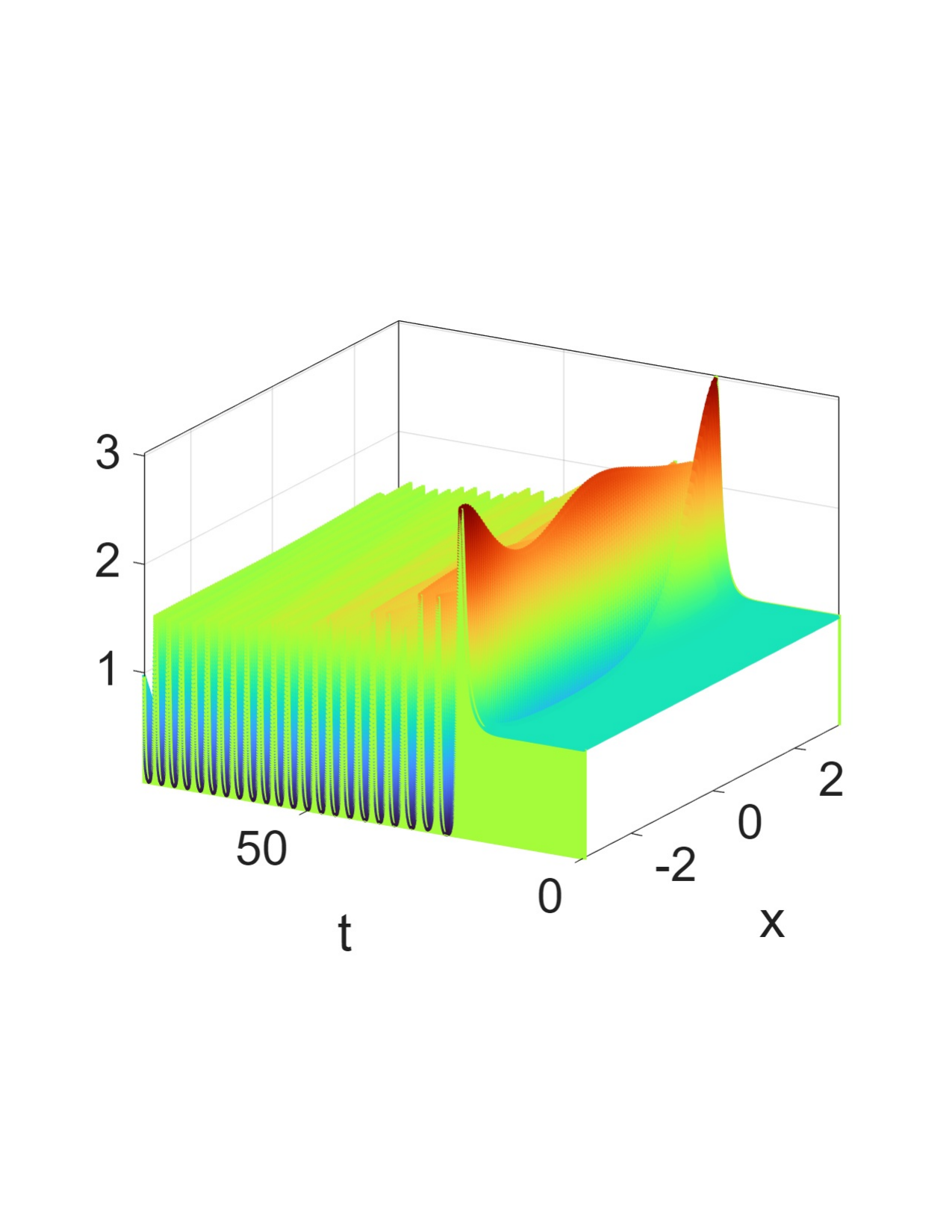}
\vspace*{-2.5cm}
\caption{$\alpha = 1.7$}
\end{subfigure}

\begin{subfigure}[b]{0.50\linewidth}
\vspace*{-2cm}
\includegraphics[width=\linewidth]{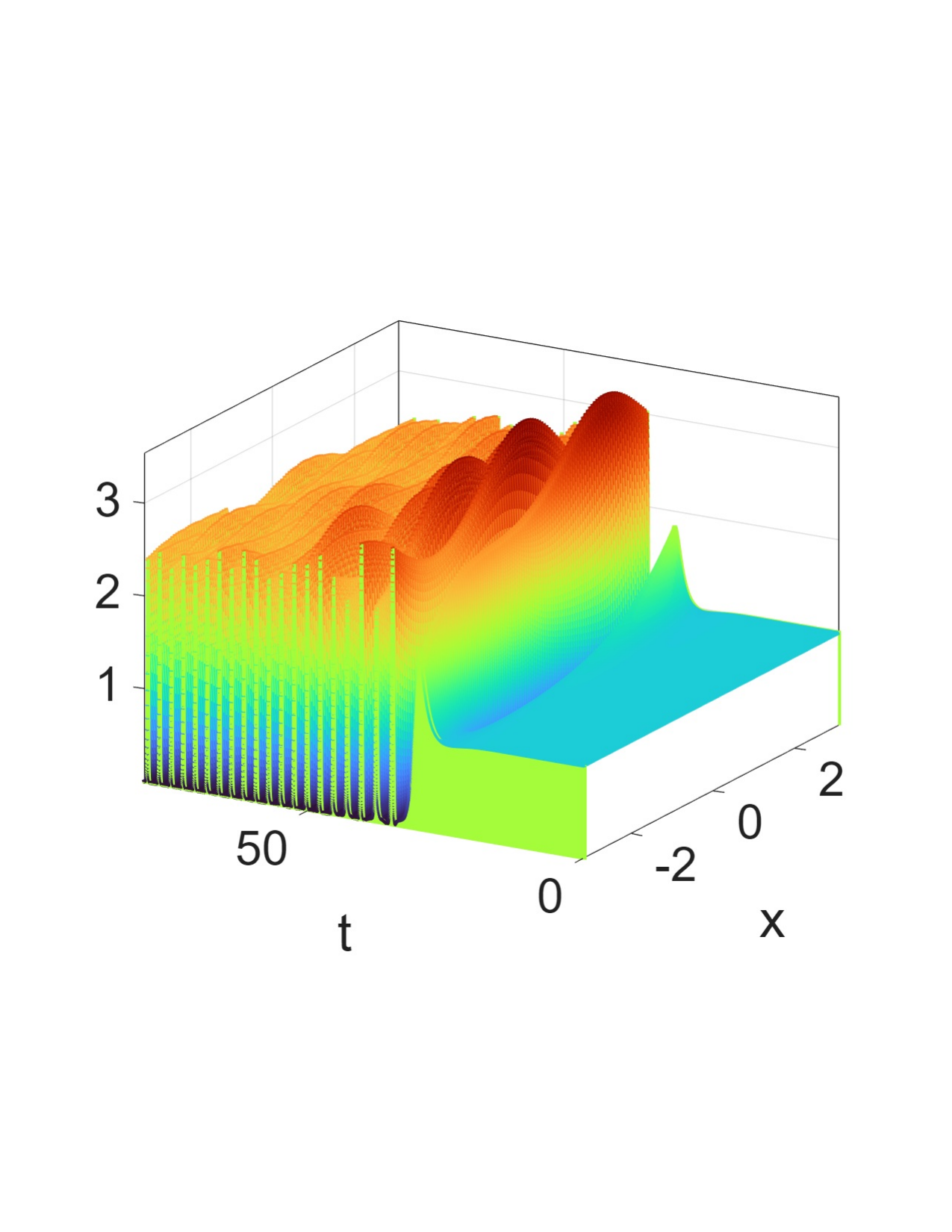}
\vspace*{-2.5cm}
\caption{$\alpha = 1.4$}
\end{subfigure}
\hfill
\begin{subfigure}[b]{0.50\linewidth}
\vspace*{-2cm}
\includegraphics[width=\linewidth]{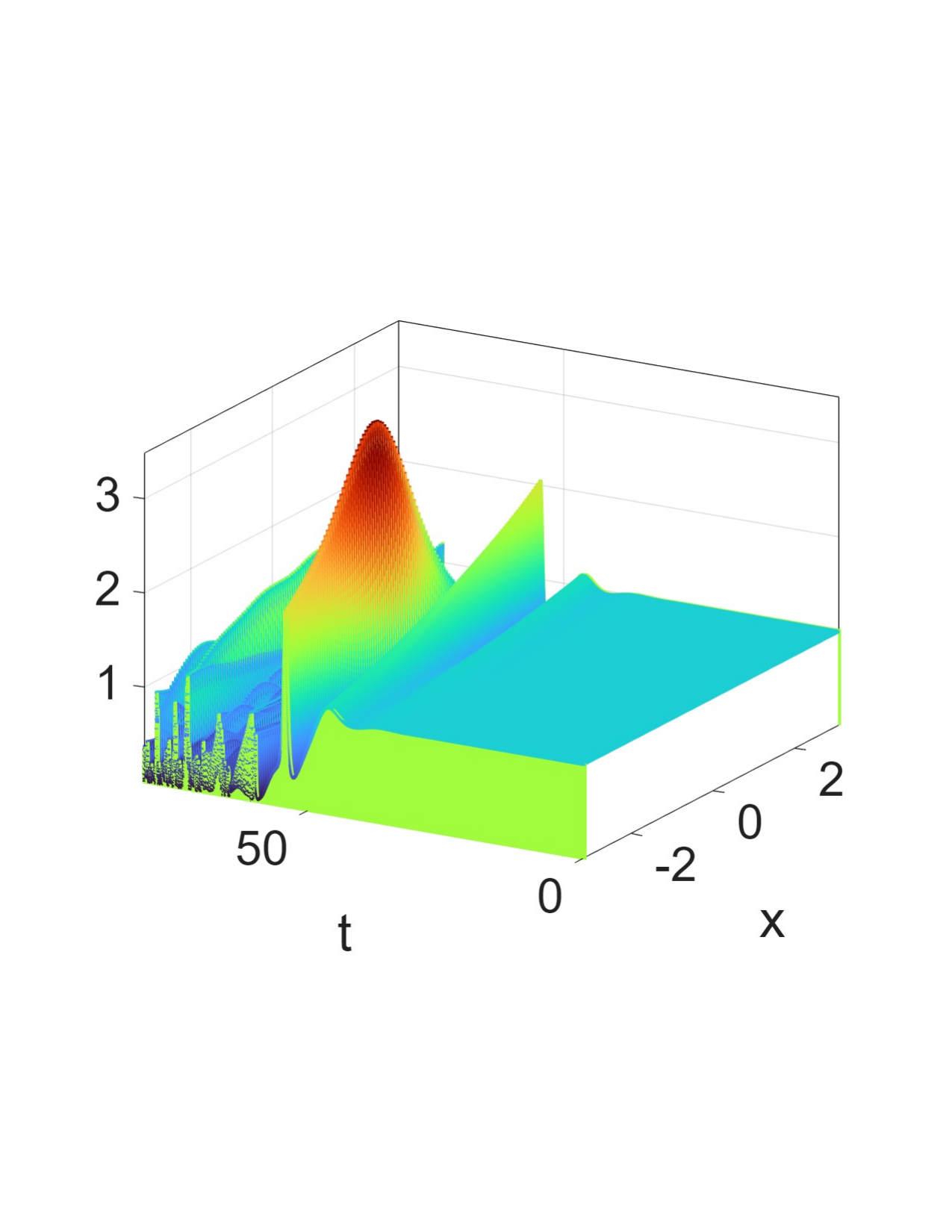}
\vspace*{-2.5cm}
\caption{$\alpha = 1.1$}
\end{subfigure}

\caption{Plots of $|u_h(x,t)|^2$ with $h = \frac{\pi}{50}$ and $u_0(x) = 1 + 10^{-6}(e^{ix}+e^{-ix})$.}
\label{chaos}
\end{figure}

\begin{itemize}
    \item A transition into chaos as $\alpha$ decreases is illustrated in \Cref{chaos}, consistent with \cite{korabel2007transition}. For $\alpha = 2$, the recurrence of localization was observed as expected; see \cite{duo2021dynamics} for a detailed numerical study on the nonlinear evolution of fNLS using the split-step Fourier spectral method. As $\alpha$ decreases to $1$, such clear recurrence was not observed with the development of irregular amplitudes. The time of first localization was observed to be delayed as $\alpha \rightarrow 1+$.
    \item By \eqref{gain}, $\Omega_m$ grows linearly in $|A|$ asymptotically as $|A| \rightarrow \infty$ when $h^{-\alpha} \ll A^2$. The transition occurs when $h^{-\alpha} \simeq A^2$. When $h^{-\alpha} \gg A^2$, we have $\Omega_m^\prime = \sqrt{-\Omega^2(\xi_m)} = A^2$; recall that $\xi_m$ may not be an integer and that $k_m = \lfloor \xi_m \rfloor$ or $\lceil \xi_m \rceil$. \Cref{maxgain} reports, for multiple L\'evy indices, the initial quadratic growth of $\Omega_m$ for sufficiently small $A$, followed by a non-quadratic behavior. Linear stability analysis suggests that the linear growth should follow, consistent with our numerical experiments when $10 \leq A \leq 20$. However for larger values of $A$, the spectrum of instability for higher harmonics is larger, and therefore the nonlinear evolution seems to be non-negligible. 
\end{itemize}

\begin{figure}[H]
\vspace{-47ex}
\includegraphics[width=1.0\linewidth]{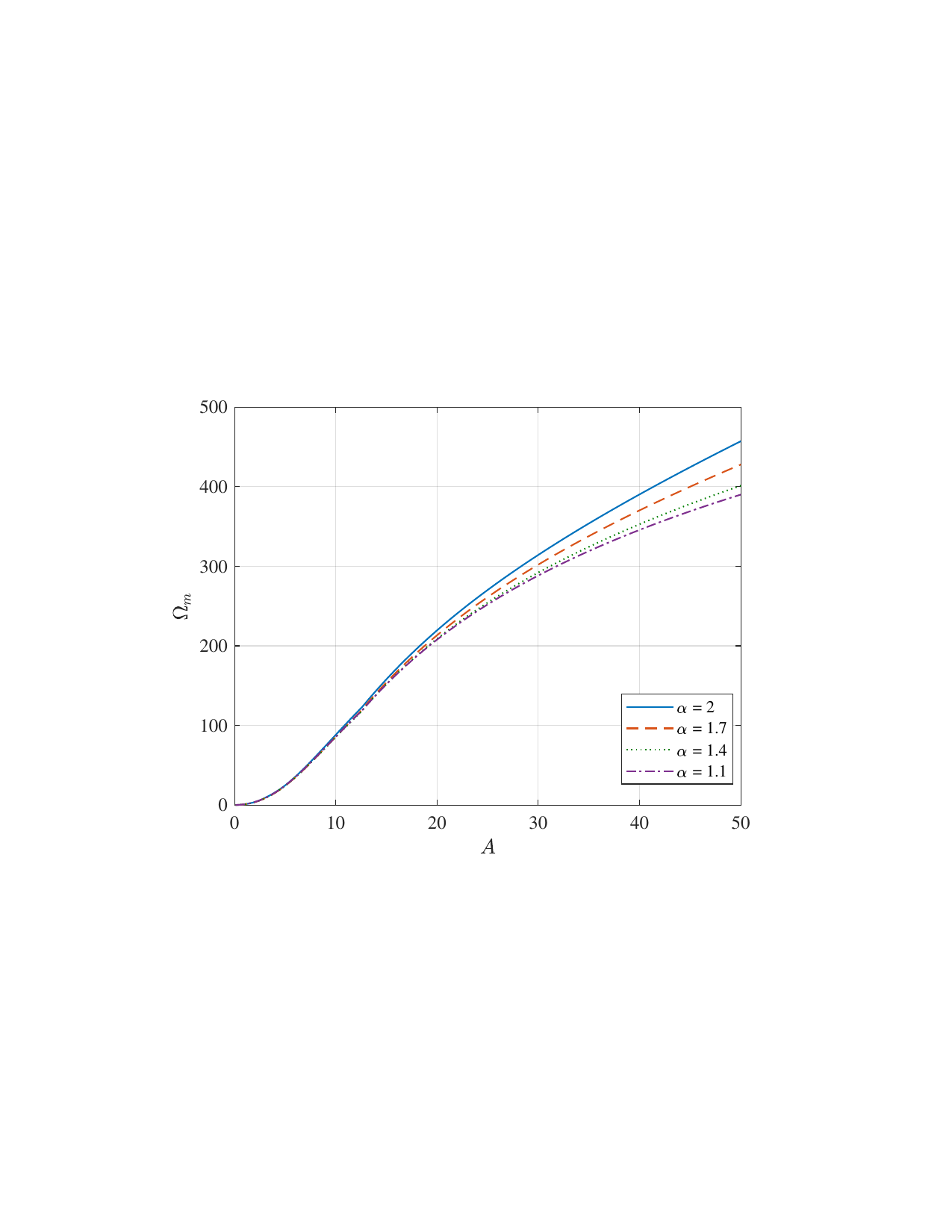}
\vspace{-47ex}
\caption{Parameters used for the plot: $h = \frac{\pi}{50},\ u_0(x) = A + 10^{-5}e^{50 i x}$. The quadratic growth of the maximum gain for $|A| \ll 1$ (continuum regime) deviates as the amplitude increases.}
\label{maxgain}
\end{figure}
  
\section{Conclusion.} 
Motivated by recent trends in fractional calculus and nonlocal dynamics, we investigated fDNLS on a periodic lattice. The continuum limit for data below the energy space was shown. However the method of periodic discrete Strichartz estimates was insufficient to establish the desired convergence up to $s_{fNLS} = \frac{2-\alpha}{4}$. The convergent dynamics of fDNLS was illustrated in the context of fractional MI of CW solutions. It was shown that the nonlocal parameter $\alpha$ triggers a broader spectrum of higher mode excitations if $|A| > \frac{1}{\sqrt{2}}$ while the spectrum shrinks if $|A| < \frac{1}{\sqrt{2}}$. The dependence of the maximum gain on $A,h,\alpha$ was shown analytically and numerically, demonstrated by the emergence of chaos as $\alpha$ departs from $\alpha = 2$ where the long-range coupling yields strong correlation between two distant lattice sites.

\appendix
\section{Appendix: well-posedness and uniform estimates.}\label{Uniform}

The well-posedness results of \eqref{fdnls}, \eqref{fnls} are given, followed by the uniform estimates needed to establish the continuum limit.

The quantitative measure of dispersive smoothing can be obtained by averaging over space and time variables under the unitary evolution. Recall that the Bourgain norm measures the $L^2$ norm of the space-time Fourier transform weighted by the deviation from the hypersurface defined as the zero-set of the dispersion relation. Let $s,b \in \mathbb{R}$ and $\widehat{f}(k,\tau) = \int_\mathbb{R}\int_{\mathbb{T}} f(x,t)e^{-i(kx + \tau t)}dx dt$. Define
\begin{equation*}
    \| f \|_{X^{s,b}}^2 = \int_{\mathbb{R}}\sum\limits_{k \in \mathbb{Z}}\langle k \rangle^{2s} \langle \tau - |k|^\alpha\rangle^{2b}|\widehat{f}(k,\tau)|^2 d\tau.
\end{equation*}
To establish local well-posedness in $[0,T]$, consider $\mathcal{C}_f = \{g \in X^{s,b}: g=f\ \text{on\ } [0,T]\}$, and define the quotient space whose norm is defined by $\| f \|_{X^{s,b}_T} = \inf\limits_{g \in \mathcal{C}_f} \| g \|_{X^{s,b}}$.

\begin{proposition}[{\cite[Theorem 1.1]{Yonggeun}}]\label{lwp_fnls}
    Given $\alpha \in (1,2)$ and $s \geq \frac{2-\alpha}{4}$, the fNLS
    \begin{equation}\label{fnls}
i \partial_t u = (-\Delta)^{\frac{\alpha}{2}} u + \mu |u|^2 u,\ (x,t) \in \mathbb{T} \times \mathbb{R}
\end{equation}    
    is locally well-posed in $H^s(\mathbb{T})$. More precisely, for any initial datum $u_0 \in H^s(\mathbb{T})$, there exists a unique $u \in X^{s,\frac{1}{2}+\epsilon}_T \subseteq C([0,T];H^s(\mathbb{T}))$, for every $0 < \epsilon \ll 1$, such that the integral representation of \eqref{fnls} given by
    \begin{equation*}
        u(t) = U(t) u_0 - i\mu \int_0^t U(t-\tau)\left(|u(\tau)|^2 u(\tau)\right) d\tau
    \end{equation*}
    holds for all $t \in [0,T]$ where $T = T(\| u_0 \|_{H^s})>0$. Furthermore mass ($M$) and energy ($H$) are conserved, where
    \begin{equation*}
        M[u(t)] = \| u(t) \|_{L^2}^2;\ H[u(t)] = \frac{1}{2} \| |\nabla|^{\frac{\alpha}{2}}u\|_{L^2}^2 + \frac{\mu}{4} \| u \|_{L^4}^4.
    \end{equation*}
\end{proposition}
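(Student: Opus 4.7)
The plan is to run a contraction mapping argument in the Bourgain space $X^{s,b}_T$ with $b=\frac{1}{2}+\epsilon$ applied to the time-localized Duhamel map. After introducing a smooth cutoff $\eta$ equal to $1$ near the origin, I would study
\[
\Phi(u)(t) := \eta(t) U(t) u_0 - i \mu\, \eta(t/T)\int_0^t U(t-\tau) \bigl(|u|^2 u\bigr)(\tau)\, d\tau,
\]
and seek a unique fixed point in a ball of $X^{s,b}_T$. Two ingredients are required: the standard linear and inhomogeneous $X^{s,b}$ estimates that produce a small positive power of $T$ for the contraction, and a trilinear estimate that encodes the dispersive smoothing of the cubic nonlinearity.

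First I would record the linear estimate $\|\eta(t)U(t)u_0\|_{X^{s,b}} \lesssim \|u_0\|_{H^s}$ and the Duhamel estimate $\|\eta(t/T)\int_0^t U(t-\tau)F\,d\tau\|_{X^{s,b}} \lesssim T^{\delta}\|F\|_{X^{s,b-1+\delta}}$ for some $\delta=\delta(\epsilon)>0$. These follow from Fourier multiplier computations combined with the smoothness of $\eta$ and are proved by now-classical arguments going back to Bourgain. Together with the algebra-type structure of $X^{s,b}$ under multilinear decomposition, the fixed-point problem reduces to establishing the trilinear bound
\[
\bigl\| u_1 \bar u_2 u_3 \bigr\|_{X^{s,-\frac{1}{2}+2\epsilon}} \lesssim \prod_{j=1}^{3} \|u_j\|_{X^{s,\frac{1}{2}+\epsilon}}, \qquad s \geq \tfrac{2-\alpha}{4}.
\]

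The main obstacle is this trilinear estimate at the threshold $s=s_{fNLS}$. I would attack it by a Littlewood--Paley/modulation dyadic decomposition: decompose each $u_j$ into pieces localized at spatial frequency $|k|\sim N_j$ and modulation $|\tau-|k|^\alpha|\sim L_j$, pair by duality against a fourth test function similarly localized, and estimate the resulting sum after Plancherel via an $L^4_{t,x}$-type Strichartz bound of the form $\|U(t)P_N f\|_{L^4_{t,x}(\mathbb{T}\times\mathbb{R})} \lesssim N^{(2-\alpha)/4}\|P_N f\|_{L^2}$. The fractional dispersion enters through the count of integer lattice solutions to the resonance system $k_1\pm k_2=n$, $|k_1|^\alpha \pm |k_2|^\alpha = m$ with $|k_j|\sim N$; because $\alpha\in(1,2)$ lacks the exact Diophantine structure available at $\alpha=2$, one cannot invoke the classical divisor bound directly, and strict convexity of $|k|^\alpha$ together with mean-value-type counting arguments must be used instead. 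This counting is what pins the critical exponent $\frac{2-\alpha}{4}$.

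Once the trilinear estimate is in hand, standard contraction on a ball of radius $\sim \|u_0\|_{H^s}$ yields a unique $u\in X^{s,b}_T$ with $T$ determined by $T^{\delta}\|u_0\|_{H^s}^2 \ll 1$, and the embedding $X^{s,b}_T \hookrightarrow C([0,T];H^s(\mathbb{T}))$ for $b>\frac{1}{2}$ gives continuity in time. Conservation of mass and energy follows by approximating $u_0$ by smooth data, applying the conservation laws for the regular solutions through the Hamiltonian structure of \eqref{fnls}, and passing to the limit using continuous dependence on data in $C_T H^s$. Everything outside of the trilinear estimate is an adaptation of Bourgain's $X^{s,b}$ machinery to the fractional dispersion; the sharp lattice-counting estimate for $|k_1|^\alpha\pm|k_2|^\alpha$ is the genuinely non-routine ingredient.
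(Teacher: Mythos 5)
Your outline is essentially the same route as the paper's source: the paper does not prove \Cref{lwp_fnls} itself but quotes it from \cite{Yonggeun}, whose argument is exactly the $X^{s,b}$ contraction you describe, with the non-routine ingredient recorded in the paper as the bilinear estimate of \Cref{bilinear} (equivalent to your dyadic $L^4$ Strichartz bound with loss $N^{(2-\alpha)/4}$, proved by the convexity/lattice-counting argument for $|k_1|^\alpha\pm|k_2|^\alpha$ that you identify). The only caveat worth noting is that energy conservation is meaningful only for $u_0\in H^{\frac{\alpha}{2}}(\mathbb{T})$, so your limiting argument yields mass conservation at low regularity but energy conservation only in and above the energy space.
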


A crucial estimate used in the proof of \Cref{lwp_fnls} is the following bilinear estimate.

\begin{proposition}[{\cite[Proposition 3.2]{Yonggeun}}]\label{bilinear}
For $s \geq \frac{2-\alpha}{4}$ and $0 < \epsilon \ll 1$, we have
\begin{equation*}
    \| uv \|_{L^2(\mathbb{R} \times \mathbb{T})} \lesssim_\epsilon \| u \|_{X^{0,\frac{1}{2}-\epsilon}} \| v \|_{X^{s,\frac{1}{2}+\epsilon}}.
\end{equation*}
\end{proposition}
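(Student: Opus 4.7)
The plan is to follow Bourgain's $X^{s,b}$ machinery: dualize via Plancherel, reduce to a weighted convolution inequality on the space--time Fourier side, apply Cauchy--Schwarz to isolate an arithmetic resonance sum, and close with a counting estimate exploiting strict convexity of the fractional dispersion $|k|^\alpha$ for $\alpha > 1$.

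First, with Fourier transform $\widehat{f}(k,\tau) = \int_{\mathbb{R}} \int_{\mathbb{T}} f(x,t) e^{-i(kx+\tau t)}\, dx\, dt$, and setting $F(k_1,\tau_1) := \langle \tau_1 - |k_1|^\alpha \rangle^{\frac{1}{2} - \epsilon} |\widehat{u}(k_1,\tau_1)|$ and $G(k_2,\tau_2) := \langle k_2 \rangle^s \langle \tau_2 - |k_2|^\alpha \rangle^{\frac{1}{2} + \epsilon} |\widehat{v}(k_2,\tau_2)|$, the stated bilinear estimate is equivalent to the weighted convolution inequality
\begin{equation*}
\sum_{k \in \mathbb{Z}} \int_{\mathbb{R}} \Bigl| \sum_{k_1 + k_2 = k} \int_{\mathbb{R}} \frac{F(k_1,\tau_1)\, G(k_2, \tau - \tau_1)}{\langle k_2 \rangle^{s} \langle \tau_1 - |k_1|^\alpha\rangle^{\frac{1}{2}-\epsilon} \langle \tau - \tau_1 - |k_2|^\alpha\rangle^{\frac{1}{2}+\epsilon}}\, d\tau_1 \Bigr|^2 d\tau \lesssim \|F\|_{L^2}^2 \|G\|_{L^2}^2.
\end{equation*}
Applying Cauchy--Schwarz to the inner integral-sum in $(k_1,\tau_1)$ and using the standard calculus identity $\int_{\mathbb{R}} \langle \tau_1 - a \rangle^{-(1-2\epsilon)} \langle \tau_1 - b\rangle^{-(1+2\epsilon)}\, d\tau_1 \lesssim_\epsilon \langle a - b\rangle^{-(1-2\epsilon)}$ to integrate out $\tau_1$, the estimate reduces to showing the uniform bound
\begin{equation*}
W(k, \tau) := \sum_{k_1 \in \mathbb{Z}} \frac{1}{\langle k - k_1 \rangle^{2s}\, \langle \tau - |k_1|^\alpha - |k - k_1|^\alpha \rangle^{1 - 2\epsilon}} \lesssim 1.
\end{equation*}

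The main obstacle is controlling $W(k,\tau)$, which is the arithmetic heart of the argument. Writing $\Phi(k_1) := |k_1|^\alpha + |k - k_1|^\alpha$, the key point is the strict convexity $\Phi''(k_1) = \alpha(\alpha - 1)(|k_1|^{\alpha - 2} + |k - k_1|^{\alpha - 2}) > 0$ for $\alpha > 1$, which localizes the integer $k_1$ satisfying $|\tau - \Phi(k_1)| \lesssim 1$. I would decompose dyadically in $|k_1 - k/2|$ and in the resonance scale $M \simeq \langle \tau - \Phi(k_1)\rangle$, and count integer $k_1$ in each piece. The worst case is the symmetric resonance $k_1 \approx k/2$, where $\Phi''(k_1) \simeq |k|^{\alpha - 2}$ and roughly $|k|^{(2-\alpha)/2}$ integer solutions to $|\tau - \Phi(k_1)| \leq 1$ are possible, exactly compensated by the weight $\langle k - k_1\rangle^{-2s}$ at the threshold $s = (2-\alpha)/4$.

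The most delicate step is balancing the degenerating Jacobian $\Phi''$ against the compensating spatial weight $\langle k - k_1\rangle^{-2s}$ at the resonant scale. Standard tools include dyadic decomposition in both frequency and modulation variables, mean-value bounds for the location of the resonance root of $\tau = \Phi(k_1)$, and a careful accounting of how many integers lie within a unit-length arc on the convex curve $\{(k_1, \Phi(k_1))\}_{k_1 \in \mathbb{R}}$. This analysis is where the threshold $s \geq (2-\alpha)/4$ emerges, and clarifies why the argument degenerates as $\alpha \to 1^+$ (where $\Phi''$ vanishes) and fails for $\alpha \leq 1$.
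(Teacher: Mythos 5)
The paper does not prove \Cref{bilinear}; it is imported verbatim from \cite{Yonggeun}, so there is no internal proof to compare against. Your sketch reconstructs the standard argument behind such periodic bilinear $X^{s,b}$ estimates, and its architecture is correct: the dualization to a weighted convolution inequality, the Cauchy--Schwarz step in $(k_1,\tau_1)$, the calculus lemma $\int_{\mathbb{R}}\langle\tau_1-a\rangle^{-(1-2\epsilon)}\langle\tau_1-b\rangle^{-(1+2\epsilon)}\,d\tau_1\lesssim_\epsilon\langle a-b\rangle^{-(1-2\epsilon)}$ (valid since the exponents sum to $2>1$ and the smaller one is $1-2\epsilon$), and the reduction to $\sup_{k,\tau}W(k,\tau)\lesssim 1$ are all sound, and you correctly identify that the threshold $s=\frac{2-\alpha}{4}$ is forced by the symmetric resonance $k_1\approx k/2$, where $\Phi''\simeq|k|^{\alpha-2}$ yields $\simeq M^{1/2}|k|^{(2-\alpha)/2}$ integers at modulation scale $M$ against the weight $|k|^{-2s}M^{-(1-2\epsilon)}$, with the dyadic sum in $M$ converging because $\epsilon<\frac14$. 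The one step you assert rather than execute is the uniform bound on $W(k,\tau)$ away from the symmetric point: to close it one should use that $\Phi$ is convex with $\Phi''\gtrsim|k|^{\alpha-2}$ uniformly on $[0,k]$ (so each sublevel set $\{|\tau-\Phi|\leq M\}$ is at most two intervals of controlled length), and separately treat $k_1\notin[0,k]$ and the near-endpoint regimes $k_1\approx 0,\,k$, where $|\Phi'|\gtrsim|k_1-k|\,|k|^{\alpha-2}$-type monotonicity bounds give summability precisely because $\alpha>1$ and $2s<1$; the checks are routine but are where the restriction $\alpha>1$ enters a second time. With that step filled in, your proof is complete and is, as far as I can tell, the same route as the cited source.
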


The time interval $[0,T]$ of the local well-posedness of \eqref{fdnls} in the Strichartz space $X_T := C([0,T];L^2_h) \cap L^6([0,T];L^\infty_h)$ cannot be determined uniformly in $h>0$ solely from $L^p_h \hookrightarrow L^q_h$, for $p<q$, since the embedding is not uniform in $h$ as $\| f \|_{L^q_h} \leq h^{\frac{1}{p} - \frac{1}{q}} \| f \|_{L^p_h}$. An application of \Cref{strichartz} yields a uniform estimate in $h$.

\begin{proposition}\label{lwp_fdnls}
    Let $s > \frac{1}{3}$ and $\alpha \in (1,2]$. For every $u_{h,0} \in H^s_h$, there exists a unique $u_h \in X_T$ such that
    \begin{equation}\label{duhamel_discrete}
    u_h(t) = U_h(t) u_{h,0} - i\mu \int_0^t U_h(t-\tau)\left(|u_h(\tau)|^2 u_h(\tau)\right) d\tau    
    \end{equation}
    where $T_h \sim_{\alpha} \| u_{h,0}\|_{H^s_h}^{-3}$ is independent of $h>0$. Furthermore discrete mass ($M_h$) and discrete energy ($H_h$) are conserved, where
    \begin{equation*}
        M_h[u_h(t)] = \| u_h(t) \|_{L^2_h}^2;\ H_h[u_h(t)] = \frac{1}{2} \| |\nabla_h|^{\frac{\alpha}{2}}u_h\|_{L^2_h}^2 + \frac{\mu}{4} \| u_h \|_{L^4_h}^4.
    \end{equation*}
\end{proposition}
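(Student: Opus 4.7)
The plan is a Banach fixed-point argument for the Duhamel map $\Phi(u)(t) := U_h(t) u_{h,0} - i\mu \int_0^t U_h(t-\tau) |u|^2 u(\tau)\, d\tau$ on the resolution space
\begin{equation*}
Y_T := \{u \in C([0,T]; H^s_h) \cap L^6([0,T]; L^\infty_h) : \|u\|_{Y_T} \leq R\},\quad \|u\|_{Y_T} := \|u\|_{L^\infty_T H^s_h} + \|u\|_{L^6_T L^\infty_h},
\end{equation*}
for $R>0$ and $T \in (0,1]$ to be chosen. The hypothesis $s > 1/3$ is exactly what is needed to apply the homogeneous Strichartz estimate \eqref{strichartzest} with the lattice-admissible pair $(q,r) = (6,\infty)$ after absorbing the $\epsilon$-loss: I would fix any $\epsilon \in (0, s - 1/3)$ so that $\|U_h(\cdot) u_{h,0}\|_{L^6_T L^\infty_h} \lesssim_\alpha \|u_{h,0}\|_{H^{1/3+\epsilon}_h} \lesssim \|u_{h,0}\|_{H^s_h}$, and combining with the unitarity of $U_h$ on $H^s_h$ gives $\|U_h(\cdot)u_{h,0}\|_{Y_T} \leq C_\alpha \|u_{h,0}\|_{H^s_h}$.

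For the Duhamel term, the key ingredients are a discrete Moser-type bound $\||v|^2 v\|_{H^s_h} \lesssim \|v\|_{L^\infty_h}^2 \|v\|_{H^s_h}$, which I would prove by standard paraproduct decomposition using the dyadic operators $\{P_N\}_{N_* \leq N \leq 1}$ of \Cref{notation} (the cutoff $|k| \leq \pi/h$ is already built into the decomposition, so no $h$-dependent constants arise), together with H\"older in time
\begin{equation*}
\left\| \|u\|_{L^\infty_h}^2 \|u\|_{H^s_h} \right\|_{L^1_T} \leq T^{2/3} \|u\|_{L^6_T L^\infty_h}^2 \|u\|_{L^\infty_T H^s_h} \leq T^{2/3} \|u\|_{Y_T}^3.
\end{equation*}
Plugging this into the inhomogeneous Strichartz bound \eqref{strichartzest2} for the $L^6_T L^\infty_h$ component, and into $\|\cdot\|_{L^\infty_T H^s_h} \leq \|u_{h,0}\|_{H^s_h} + \|\cdot\|_{L^1_T H^s_h}$ for the other component, yields
\begin{equation*}
\|\Phi(u)\|_{Y_T} \leq C_\alpha \|u_{h,0}\|_{H^s_h} + C_\alpha T^{2/3} \|u\|_{Y_T}^3.
\end{equation*}
Choosing $R := 2 C_\alpha \|u_{h,0}\|_{H^s_h}$ and requiring $C_\alpha T^{2/3} R^2 \leq 1/2$, i.e., $T \sim_\alpha \min(1, \|u_{h,0}\|_{H^s_h}^{-3})$, makes $\Phi$ a self-map of $Y_T$; a parallel estimate for $\Phi(u)-\Phi(v)$ based on $|u|^2 u - |v|^2 v = O\bigl((|u|^2 + |v|^2)(u-v)\bigr)$ gives the contraction, and the Banach fixed-point theorem produces the unique solution, whose $L^\infty_T H^s_h$ control follows from the a priori bound already derived.

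Conservation laws come directly from the equation: pairing \eqref{fdnls} with $\overline{u_h}$ in $L^2_h$ and taking imaginary parts gives $\frac{d}{dt} M_h[u_h] = 0$, while pairing with $\partial_t \overline{u_h}$ and taking real parts, together with the self-adjointness of $(-\Delta_h)^{\alpha/2}$ on $L^2_h$, gives $\frac{d}{dt} H_h[u_h] = 0$. The main obstacle is uniformity in $h$: this is guaranteed because (i) the Strichartz constants in \Cref{strichartz} are $h$-independent, (ii) the discrete Moser estimate passes through with an $h$-free constant thanks to the intrinsic Littlewood--Paley framework of $\mathbb{T}_h^*$, and (iii) the window $T \leq 1$ coincides with the range of validity of \Cref{strichartz}, while for $\|u_{h,0}\|_{H^s_h}$ small one simply takes $T = 1$ and the contraction still closes.
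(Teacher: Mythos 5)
Your proposal is correct and follows essentially the same route as the paper: a contraction argument in $C_TH^s_h\cap L^6_TL^\infty_h$ using \Cref{strichartz} with the admissible pair $(6,\infty)$ (whence the threshold $s>\tfrac{1}{3}$ from the $H^{2/q+\epsilon}_h$ loss), the discrete Moser bound $\||u|^2u\|_{H^s_h}\lesssim\|u\|_{L^\infty_h}^2\|u\|_{H^s_h}$, and H\"older in time producing the factor $T^{2/3}$ and hence $T_h\sim_\alpha\|u_{h,0}\|_{H^s_h}^{-3}$. Your additional verification of the conservation laws and the explicit paraproduct justification of the Moser estimate are fine elaborations of steps the paper leaves implicit.
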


\begin{proof}
Let $\Gamma u_h$ be the RHS of \eqref{duhamel_discrete}. By \Cref{strichartz} and the a priori estimate,
\begin{equation*}
\begin{split}
\| \Gamma u_h \|_{X_T} &:= \| \Gamma u_h \|_{C_T H^s_h} + \| \Gamma u_h \|_{L^6_T L^\infty_h} \lesssim \| u_{h,0} \|_{H^s_h} + \| |u_h|^2 u_h \|_{L^1_T H^s_h}\\
&\lesssim \| u_{h,0} \|_{H^s_h} + \left|\left|\| u_h \|_{L^\infty_h}^2 \| u_h \|_{H^s_h}\right|\right|_{L^1_T} \leq \| u_{h,0} \|_{H^s_h} + T^{\frac{2}{3}} \| u_h \|_{L^6_T L^\infty_h}^2 \| u_h \|_{C_T H^s_h},\\
\| \Gamma u_h -  \Gamma v_h \|_{X_T} &\lesssim T^{\frac{2}{3}} (\| u_h \|_{X_T}^2 + \| v_h \|_{X_T}^2) \| u_h - v_h\|_{X_T},
\end{split}
\end{equation*}
there exists a unique fixed point $u_h$ in a small closed ball of $X_T$ such that $u_h = \Gamma u_h$. The domain of $\Gamma$ is extended from the neighborhood of the origin to the entire $X_T$ by the continuity argument.    
\end{proof}

\section{Appendix: exact solutions.}\label{appendix_exact}
To illustrate the trivial case, consider $u_0(x) = A \in \mathbb{C}$. By direct computation,
\begin{equation*}
    p_hS_h(t)d_h u_0(x) - S(t) u_0 (x) = A e^{-i\mu |A|^2 t} - A e^{-i\mu |A|^2 t} = 0.
\end{equation*}
Another example of exact solution is a family of sinusoids that oscillate at a single spatiotemporal frequency. Let $A \in \mathbb{C} \setminus \{0\},\ n \in \mathbb{Z} \setminus \{0\},\ s \in \mathbb{R}$, and consider $u_0(x) = A |n|^{-s}e^{inx}$ for $x \in \mathbb{T}$. By definition of $d_h$ and $\mathcal{F}_h$, and given the Fourier expansion $f(x) = \frac{1}{2\pi} \sum\limits_{k \in \mathbb{Z}}\widehat{f}(k)e^{ikx}$, we have 
\begin{equation}\label{dft}
d_h [e^{ik\cdot}](x^\prime) = \begin{cases}
			\frac{e^{ihk} - 1}{ihk} e^{ikx^\prime}, & k \neq 0\\
            1, & k = 0,
\end{cases}
;\ \mathcal{F}_h [d_h f] (k^\prime) = \begin{cases}
    \sum\limits_{q \in \mathbb{Z}} \frac{\widehat{f}(pq + k^\prime)}{pq + k^\prime} \frac{e^{ihk^\prime} - 1}{ih}, & k^\prime \neq 0\\
    \widehat{f}(0), & k^\prime = 0,
    \end{cases}
\end{equation}
where $x^\prime \in \mathbb{T}_h,\ k^\prime \in \mathbb{T}_h^*$. Note that the domain of $\mathcal{F}_h [d_h f]$ can be extended from $\mathbb{T}_h^*$ to $\mathbb{Z}$ periodically since the summation over $q \in \mathbb{Z}$ is over all periods with the period $2M = \frac{2\pi}{h}$. By direct computation,
\begin{equation}\label{exactsolution}
\begin{split}
S(t)u_0(x) &= A |n|^{-s} e^{-it(|n|^\alpha + \mu |A|^2 |n|^{-2s}} e^{inx},\ x \in \mathbb{T}\\
S_h(t) d_h u_0(x) &= A |n|^{-s}\frac{e^{ihn}-1}{ihn} e^{-it(|\frac{2}{h}\sin \frac{hn}{2}|^\alpha + \mu |A|^2 |n|^{-2s}|\frac{e^{ihn}-1}{ihn}|^2} e^{inx},\ x \in \mathbb{T}_h,
\end{split}
\end{equation}
i.e., the exact solutions to \eqref{fnls} and \eqref{fdnls}, respectively. Recalling 
\begin{equation*}
\mathcal{F}[p_h f](k) = P_h(k) \mathcal{F}_h [f] (k) = \left(\frac{\sin(\frac{hk}{2})}{\frac{hk}{2}}\right)^2 \mathcal{F}_h [f] (k),    
\end{equation*}
for $k \in \mathbb{Z}$, we have
\begin{align}\label{exact_cts}
\| p_h u_h(t) &- u(t) \|_{L^2(\mathbb{T})} = (2\pi)^{-\frac{1}{2}} \|  P_h(k) \mathcal{F}_h[u_h(t)](k) - \widehat{u(t)}(k)\|_{l^2_k}\nonumber\\
&= (2\pi)^{\frac{1}{2}} |A| |n|^{-s} \left| P_h(n) \frac{e^{ihn}-1}{ihn} e^{-it(|\frac{2}{h}\sin \frac{hn}{2}|^\alpha + \mu |A|^2 |n|^{-2s}|\frac{e^{ihn}-1}{ihn}|^2} - e^{-it(|n|^\alpha + \mu |A|^2 |n|^{-2s}} \right|\nonumber\\
&= \left(\sqrt{\frac{\pi}{2}}|A||n|^{1-s}\right)h + O(h^2),
\end{align}
which yields sharp linear convergence, where the last equality is by the Taylor's Theorem. 
\bibliographystyle{plain}
\small
\bibliography{ref.bib}

\end{document}